\newtheorem{Theorem}{Theorem}[section]
\newtheorem{Lemma}[Theorem]{Lemma}
\newtheorem{Proposition}[Theorem]{Proposition}
\newtheorem{Corollary}[Theorem]{Corollary}
\newtheorem{Remark}[Theorem]{Remark}
\newtheorem{Definition}[Theorem]{Definition}
\newtheorem*{Theorem A}{Theorem A}
\newcommand*{\overbar}[1]{\mkern 1.5mu\overline{\mkern-1.5mu#1\mkern-1.5mu}\mkern 1.5mu}
\begin{document}
\author{Charlie Beil}
\address{Institute for Mathematics and Scientific Computing, University of Graz, Heinrichstrasse 36, 8010 Graz, Austria.}
 \email{charles.beil@uni-graz.at}
 \title[Noetherian criteria for dimer algebras]{Noetherian criteria for dimer algebras}
 \keywords{Dimer algebra, dimer model, noncommutative crepant resolution, noetherian, corner ring, perfect matching.}
 \subjclass[2020]{16P40,16G20,14A22}
 \date{}

\begin{abstract}
Let $A$ be a nondegenerate dimer (or ghor) algebra on a torus, and let $Z$ be its center.
Using cyclic contractions, we show the following are equivalent: $A$ is noetherian; $Z$ is noetherian; $A$ is a noncommutative crepant resolution; each arrow of $A$ is contained in a perfect matching whose complement supports a simple module; and the vertex corner rings $e_iAe_i$ are pairwise isomorphic.
\end{abstract}

\maketitle

\section{Introduction}

In this article, all dimer quivers are on a real two-torus $T^2$, and all algebras are over an algebraically closed base field $k$.
A prominent class of noncommutative crepant resolutions (NCCRs) and Calabi-Yau algebras are cancellative dimer algebras.\footnote{For the definition of an NCCR and a Calabi-Yau algebra, see for example \cite{V,Br}.}
In fact, a dimer algebra is an NCCR and Calabi-Yau if and only if it is cancellative \cite{MR,D,Br}.
Cancellativity has been shown to be equivalent to certain combinatorial conditions on the dual graph of the quiver \cite{HV,IU,KS,G}.
The main objective of this article is to characterize cancellative dimer algebras in terms of noetherianity, vertex corner ring structure, and perfect matchings whose complements support simple modules.

Dimer algebras were introduced in string theory in 2005 to describe a class of quiver gauge theories \cite{HK,F-K}.
A \textit{dimer algebra} is a quiver algebra $A = kQ/I$ of a quiver $Q$ whose underlying graph $\overbar{Q}$ embeds in a torus $T^2$ (or more generally, a compact surface), such that each connected component of $T^2 \setminus \overbar{Q}$ is simply connected and bounded by an oriented cycle, called a unit cycle.
The relations of $A$ are given by the ideal
$$I := \left\langle p - q \ | \ \exists \ a \in Q_1 \text{ such that } pa \text{ and } qa \text{ are unit cycles} \right\rangle \subset kQ,$$
where $p$ and $q$ are paths.
Since $I$ is generated by certain differences of paths, we may refer to a path modulo $I$ as a path in the dimer algebra $A$.

A distinguishing property of dimer algebras is cancellativity.
Two paths $p,q$ in a dimer algebra $A$ are said to form a \textit{non-cancellative pair} if $p \not = q$, and there is a path $r \in A$ such that
$$rp = rq \not = 0 \ \ \text{ or } \ \ pr = qr \not = 0.$$
If such a pair exists, then $A$ and $Q$ are called \textit{non-cancellative}; otherwise they are \textit{cancellative}.
We will show that a dimer algebra is cancellative if and only if it is noetherian.

We will also characterize noetherianity for ghor algebras.
The \textit{ghor algebra} of a dimer quiver $Q$ on a torus is (isomorphic to) the quotient of the dimer algebra $A = kQ/I$ by non-cancellative pairs,\footnote{In \cite{B5}, we called ghor algebras `homotopy algebras' since their relations identify homotopic paths in the quiver if the surface is a torus.
However, in \cite{BB} we showed that on higher genus surfaces homologous cycles in the quiver are also identified, and so the prefix `homotopy' became less suitable.  
The word `ghor' is Klingon for surface.}
$$\Lambda = A/\langle p - q \ | \ p,q \text{ is a non-cancellative pair} \rangle.$$
Ghor algebras were introduced in \cite{B2}, and often have nicer algebraic and homological properties than do their dimer counterparts \cite{B5,B7,BB}.
Note that a dimer algebra coincides with its ghor algebra if and only if it is cancellative.\footnote{Using the definition of a ghor algebra given in \cite[Introduction]{B2}, the statement that a dimer algebra (on a torus) coincides with its ghor algebra if and only if it is cancellative is nontrivial and is shown in \cite[Theorem 4.31]{B2}.}

The primary tool we use to study noetherianity is the notion of a cyclic contraction, also introduced in \cite{B2}.
A \textit{cyclic contraction} is map of dimer algebras $\psi: kQ/I \to kQ'/I'$, such that $Q'$ is obtained from $Q$ by contracting a set of arrows to vertices whilst perserving the cycles of $Q$ in a suitable sense (see Section \ref{cyclic section}). 
Cyclic contractions enable non-cancellative and cancellative dimer algebras to be related to each other.
An example of a cyclic contraction is given in Figure \ref{example}.
In this example, $A = kQ/I$ is non-cancellative and $A' = kQ'/I'$ is cancellative: Let $a,b,c$ be the respective red, blue, and green arrows in $Q$, as shown in the figure.
Then the paths $ab, ba \in A$ form a non-cancellative pair since
$$ab \not = ba \ \ \ \text{ and } \ \ \ cab = cba \not = 0.$$

\begin{figure}
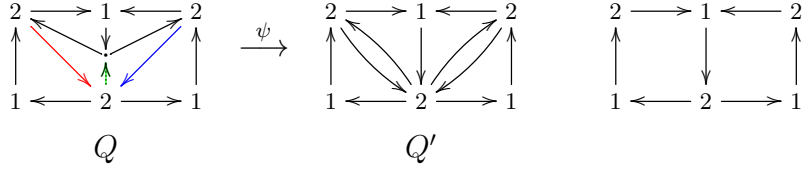

$$\begin{array}{ccccc}
\xy
(-12,6)*+{\text{\scriptsize{$2$}}}="1";(0,6)*+{\text{\scriptsize{$1$}}}="2";(12,6)*+{\text{\scriptsize{$2$}}}="3";
(0,0)*{\cdot}="7";
(-12,-6)*+{\text{\scriptsize{$1$}}}="4";(0,-6)*+{\text{\scriptsize{$2$}}}="5";(12,-6)*+{\text{\scriptsize{$1$}}}="6";
{\ar"6";"3"};{\ar@[blue]"3";"5"};{\ar^{}"7";"3"};{\ar^{}"3";"2"};
{\ar^{}"1";"2"};{\ar^{}"2";"7"};{\ar^{}"7";"1"};{\ar@[red]"1";"5"};{\ar^{}"5";"4"};{\ar^{}"4";"1"};{\ar^{}"5";"6"};
{\ar@[green]"5";"7"};{\ar@{..>}"5";"7"};
\endxy
& \stackrel{\psi}{\longrightarrow} &
\xy
(-12,-6)*+{\text{\scriptsize{$1$}}}="4";(0,-6)*+{\text{\scriptsize{$2$}}}="5";(12,-6)*+{\text{\scriptsize{$1$}}}="6";
(-12,6)*+{\text{\scriptsize{$2$}}}="1";(0,6)*+{\text{\scriptsize{$1$}}}="2";(12,6)*+{\text{\scriptsize{$2$}}}="3";
{\ar@/_.3pc/"1";"5"};{\ar@/_.3pc/"5";"1"};{\ar@/^.3pc/"5";"3"};{\ar@/^.3pc/"3";"5"};
{\ar^{}"1";"2"};{\ar^{}"2";"5"};{\ar^{}"5";"4"};{\ar^{}"4";"1"};{\ar^{}"5";"6"};{\ar^{}"6";"3"};{\ar^{}"3";"2"};
\endxy
& &
\xy
(-12,-6)*+{\text{\scriptsize{$1$}}}="4";(0,-6)*+{\text{\scriptsize{$2$}}}="5";(12,-6)*+{\text{\scriptsize{$1$}}}="6";
(-12,6)*+{\text{\scriptsize{$2$}}}="1";(0,6)*+{\text{\scriptsize{$1$}}}="2";(12,6)*+{\text{\scriptsize{$2$}}}="3";
{\ar^{}"1";"2"};{\ar^{}"2";"5"};{\ar^{}"5";"4"};{\ar^{}"4";"1"};{\ar^{}"5";"6"};{\ar^{}"6";"3"};{\ar^{}"3";"2"};
\endxy
\\
Q & \ \ \ & Q' & \ \ \ &
\end{array}$$
\caption{An example of a cyclic contraction.
Each quiver is drawn on a torus, and $\psi: A \to A'$ contracts the green arrow.
The arrows in the two unit 2-cycles in $Q'$ are redundant generators for $A'$ and so may be removed.}
\label{example}
\end{figure}

Our main theorem is the following.

\begin{Theorem} \label{big theorem}
(Theorems \ref{cool!} and \ref{iff}.)
Let $A = kQ/I$ be a nondegenerate dimer algebra on a torus with center $Z$, and let $\Lambda$ be the corresponding ghor algebra with center $R$.
The following are equivalent:
\begin{enumerate}
 \item $A$ is cancellative.
 \item $A$ is noetherian.
 \item $Z$ is noetherian.
 \item $A$ is a finitely generated $Z$-module.
 \item The vertex corner rings $e_iAe_i$ are pairwise isomorphic.
 \item Each vertex corner ring $e_iAe_i$ is isomorphic to $Z$.
 \item Each arrow annihilates a simple $A$-module of dimension vector $1^{Q_0}$.
 \item Each arrow is contained in a simple matching.
 \item The center $R$ of $\Lambda$ equals the cycle algebra $S$ of $A$.
 \item If $\psi: A \to A'$ is a cyclic contraction, then $\psi$ is trivial.
\end{enumerate}
Furthermore, these conditions are equivalent to each condition (2) -- (7) with $A$ and $Z$ replaced by $\Lambda$ and $R$.
\end{Theorem}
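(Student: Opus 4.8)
The plan is to prove the ten conditions equivalent through a short classical arc, then a structural comparison of centres and vertex corner rings carried out along a single cyclic contraction, then a representation-theoretic translation of the perfect-matching conditions, and finally a transfer of everything to the homotopy algebra via \cite[Theorem 4.30]{B2}. The easy arc is $(1)\Rightarrow(2),(3),(4)$: if $A$ is cancellative then by \cite{MR,D,Br} it is an NCCR of its centre $Z$, so $Z$ is a finitely generated commutative $k$-algebra (the coordinate ring of a three-dimensional toric Gorenstein singularity) and $A$ is a finitely generated $Z$-module; hence $Z$, and therefore $A$, is noetherian.

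The core of the argument lives in the cyclic contraction. I would fix, using the structure theory of dimer algebras, a cyclic contraction $\psi\colon A\to A'$ onto a cancellative algebra $A'$, with $\psi$ trivial exactly when $A$ is cancellative, and let $S$ denote the cycle algebra of $A$ --- intrinsic to $A$, equal to the centre $Z'$ of $A'$, and a normal noetherian affine domain. The impression of the homotopy algebra embeds $R$ and every corner ring $e_i\Lambda e_i$ into $S$ with $R\subseteq e_i\Lambda e_i\subseteq S$, where $R$ is the intersection of the $e_i\Lambda e_i$ and $S$ the subring they generate; the analogous statements hold for $(A,Z)$, where $e_iAe_i$ surjects onto $e_i\Lambda e_i$, an isomorphism iff $A$ is cancellative. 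The key computation is that one contracted arrow of a \emph{nontrivial} $\psi$ creates, by tracing the new cyclic paths it spawns, a vertex $i$ with $e_i\Lambda e_i\subsetneq S$, whereas a trivial $\psi$ is the cancellative case $R=S$. Granting this, and that $R=S$ is precisely cancellativity (equivalently $\Lambda=A$, \cite[Theorem 4.30]{B2}), a Hilbert-series comparison inside $S$ gives: $R=S$ $\iff$ all $e_i\Lambda e_i$ are equal to $S$ $\iff$ the $e_i\Lambda e_i$ are pairwise isomorphic $\iff$ each $e_i\Lambda e_i\cong R$; this yields $(5),(6),(9),(10)$ and their homotopy-algebra versions. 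It also closes the loop: if $A$ is non-cancellative, the corner ring over the non-cancellative locus is a proper subring of $S$, and the standard infinitely-generated-ideal argument along that locus shows that this ring, hence $Z$, hence $A$, is neither noetherian nor module-finite over $Z$, which is $(2),(3),(4)\Rightarrow(1)$.

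For the perfect-matching conditions, a simple $A$-module of dimension $1^{Q_0}$ is, up to the torus action, the representation sending some set $T\subseteq Q_1$ of arrows to $0$ and the rest to nonzero scalars; the relations force $T$ to lie in the complement of a perfect matching and simplicity forces the subquiver on $Q_1\setminus T$ to strongly connect $Q_0$, so an arrow annihilates such a simple iff it lies in a perfect matching $\mathfrak{D}$ with $Q_1\setminus\mathfrak{D}$ strongly connecting $Q_0$ --- a simple matching. This is $(7)\iff(8)$. Using the dual-graph characterizations of cancellativity \cite{HV,IU,Bo,KS,G}, a cancellative dimer algebra has simple matchings covering every arrow, giving $(1)\Rightarrow(8)$; conversely, if $A$ is non-cancellative then an arrow inside a minimal non-cancellative relation $rp=rq$ lies in no simple matching, since deleting a perfect matching through it breaks the strong connectivity that the relation requires, giving $(8)\Rightarrow(1)$. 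Here nondegeneracy --- every arrow in a perfect matching --- is what keeps the matching set rich enough. Finally, every object used above (the cycle algebra $S$, the corner rings, the chain $R\subseteq e_i\Lambda e_i\subseteq S$, the simple matchings) is literally the same for $A$ and $\Lambda$, so conditions $(2)$--$(7)$ transfer verbatim to $(\Lambda,R)$.

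The hard part will be the key computation inside the cyclic contraction: proving that a single contracted arrow genuinely forces some corner ring strictly inside the cycle algebra, equivalently forces an arrow out of every simple matching. This is not a soft statement --- it rests on the fine combinatorics of how a non-cancellative pair $rp=rq$ distributes extra cyclic paths unevenly among the vertices --- and it is exactly where nondegeneracy and the cyclicity of $\psi$ are indispensable.
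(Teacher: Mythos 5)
Your outline does track the paper's overall architecture: cyclic contractions, the cycle algebra $S$ vs.\ the homotopy center $R$, the equivalence $R=S \iff$ cancellative, and the perfect-matching/representation dictionary. But the two places you yourself flag as ``the hard part'' are exactly where the paper's content lives, and the heuristics you offer in their place do not actually work.

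First, your (8)$\Rightarrow$(1) (equivalently ``non-cancellative forces $Q_1^{\mathcal S}\neq\emptyset$'') rests on the claim that ``deleting a perfect matching through [an arrow of $r$] breaks the strong connectivity that the relation requires.'' That is not the mechanism. The paper's Proposition~\ref{min length} is a contradiction argument carried out in the universal cover: one passes to a \emph{minimal} non-cancellative pair $p,q$ and a minimal $r$ with $r^+$ entering $\mathcal R_{p,q}$, assumes some arrow of $r$ is covered by a simple matching $D$, produces a path $t$ from $\operatorname{t}(p)$ into the interior of $\mathcal R_{p,q}$ avoiding $D$, and then uses Proposition~\ref{Bastuff}(1)--(2) to manufacture a non-cancellative pair whose bounded region sits \emph{properly inside} $\mathcal R_{p,q}$, contradicting minimality. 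Nothing about strong connectivity of $Q\setminus D$ is being threatened by the relation $rp\equiv rq$; rather the simple matching is used to build the contradicting pair. This requires Lemma~\ref{p-q in I} (that minimal $r^+$ stays wholly inside $\mathcal R_{p,q}$), which is also nontrivial. Your proposal has no substitute for this.

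Second, your ``Hilbert-series comparison'' and ``standard infinitely-generated-ideal argument'' for the noetherianity and corner-ring conditions are also gestures, not proofs. From ``$e_iAe_i$ properly contained in $S$'' you cannot simply read off an infinite ascending chain of ideals of $Z$: one needs a specific family of central elements whose powers escape the corner ring. This is Proposition~\ref{s^n}, which shows that when $A$ is non-cancellative there is a cycle $p\in e_iAe_i$ with $\sigma\nmid\overline p$, $\overline p\notin\bar\tau_\psi(e_jAe_j)$, and crucially $\overline p^{\,n}\notin\bar\tau_\psi(e_jAe_j)$ for \emph{every} $n$ --- the proof of which again uses the covering-quiver geometry (Figure~\ref{proposition figure3}) and Proposition~\ref{Bastuff}(8). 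The chain is then built from $q_n$ with $\overline{q}_n=\overline p^{\,n}\sigma^N$ (Lemma~\ref{Bbstuff}(2)), and ``does not stabilize'' is an argument about monomials in the polynomial ring $k[x_D]$ (Theorem~\ref{nonnoetherian theorem}); infinite-generation of $A$ over $Z$ then uses Artin--Tate via Lemma~\ref{S over R} (Theorem~\ref{non-finitely generated}). None of this is a Hilbert-series argument, and indeed the corner rings $\bar\tau_\psi(e_iAe_i)$ are not graded in a way that would make such a comparison straightforward. Similarly, ``pairwise isomorphic $\iff$ all equal $S$'' is not established by your proposal: the paper gets ``not pairwise isomorphic'' in the non-cancellative case from Proposition~\ref{s^n} directly, and ``each $\cong Z$'' in the cancellative case from Proposition~\ref{Bastuff}(7).

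In short: the plan is sound and correctly identifies the skeleton, but Propositions~\ref{min length} and~\ref{s^n} and Theorems~\ref{nonnoetherian theorem}, \ref{non-finitely generated} are the actual theorem, and the proposal leaves them unproven, with proposed heuristics that would not close the gaps.
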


More precisely, in (9) we mean that $R$ is generated by the monomial images of all the cycles of $Q$ under the canonical quotient map $kQ \to \Lambda$.
The implications (1) $\Rightarrow$ (2) - (6) are well known (e.g., \cite{D,MR}, \cite[Proposition 4.28]{B2}); one of the main objectives of this article is to prove the converse implications, which are new and much more involved.
Some of the equivalences do not hold in general for dimer or ghor algebras on surfaces other than the torus, and are thus unexpected.
For example, the equivalences (2) $\Leftrightarrow $ (8) $\Leftrightarrow $ (6) do not hold on higher genus surfaces by \cite[Proposition 3.9, Lemma 4.3]{BB}.

\section{Preliminaries}

Given a quiver $Q$, we denote by $kQ$ the path algebra of $Q$, and by $Q_{\ell}$ the paths of length $\ell$.
The idempotent at vertex $i \in Q_0$ is denoted $e_i$, and the head and tail maps are denoted $\operatorname{h},\operatorname{t}: Q_1 \to Q_0$.
Multiplication of paths is read right to left, following the composition of maps.
By module we mean left module; and by infinitely generated module, we mean a module that is not finitely generated.
The `support' of a module $V$ over a quiver algebra $kQ/I$ is the largest subquiver $Q' \subseteq Q$ for which no vertex or arrow of $Q'$ annihilates $V$. 
Finally, we denote by $e_{ji} \in M_n(k)$ the $n \times n$ matrix with a $1$ in the $ji$-th slot and zeros elsewhere. 

\subsection{Covering quivers and perfect matchings}

Let $A = kQ/I$ be a dimer algebra.
If $p,q$ are paths in $Q$ that are equal modulo $I$, then we will write $p \equiv q$; if $p,q$ are regarded as paths in $A$, then we will write $p = q$.

$\bullet$ Consider a covering map $\pi: \mathbb{R}^2 \rightarrow T^2$ such that for some $i \in Q_0$,
$$\pi\left(\mathbb{Z}^2 \right) = i.$$
Denote by $Q^+ := \pi^{-1}(Q) \subset \mathbb{R}^2$ the infinite covering quiver of $Q$.
For each path $p$ in $Q$, denote by $p^+$ the unique path in $Q^+$ with tail in the unit square $[0,1) \times [0,1) \subset \mathbb{R}^2$ satisfying $\pi(p^+) = p$.
If two paths $p^+$ and $q^+$ in $Q^+$ have coincident tails and coincident heads, then we denote the compact region they bound by $\mathcal{R}_{p,q}$ (or $\mathcal{R}_{p+I,q+I}$ if the representatives $p$ and $q$ are clear from context.)

$\bullet$ We will consider the following sets of arrows:
 \begin{itemize}
  \item[--] A \textit{perfect matching} $x \subset Q_1$ is a set of arrows such that each unit cycle contains precisely one arrow in $x$.
  \item[--] A \textit{simple matching} $x \subset Q_1$ is a perfect matching such that $Q \setminus x$ supports a simple $A$-module of dimension $1^{Q_0}$ (that is, the subquiver of $Q$ with arrow set $Q_1 \setminus x$ contains a cycle that passes through each vertex of $Q$).
 \end{itemize}
Denote by $\mathcal{P}$ and $\mathcal{S}$ the set of perfect and simple matchings of $Q$, respectively.
 $A$ and $Q$ are said to be \textit{nondegenerate} if each arrow is contained in a perfect matching.

$\bullet$ By a \textit{cyclic subpath} of a path $p$, we mean a subpath of $p$ that is a nontrivial cycle.
We will consider the following sets of cycles in $A$:
\begin{itemize}
 \item[--] Denote by $\mathcal{C}$ the set of cycles in $A$ (i.e., cycles in $Q$ modulo $I$).
 \item[--] For $u \in \mathbb{Z}^2$, denote by $\mathcal{C}^u$ the set of cycles $p \in \mathcal{C}$ such that
$$\operatorname{h}(p^+) = \operatorname{t}(p^+) + u \in Q_0^+.$$
 \item[--] For $i \in Q_0$, denote by $\mathcal{C}_i$ the set of cycles in the vertex corner ring $e_iAe_i$.
 \item[--] Denote by $\hat{\mathcal{C}}$ the set of cycles $p \in \mathcal{C}$ such that the lift of each cyclic permutation of each representative of $p$ does not have a cyclic subpath.
\end{itemize}
We decorate $\mathcal{C}$ so as to specify a set of cycles; e.g., $\hat{\mathcal{C}}^u_i := \hat{\mathcal{C}} \cap \mathcal{C}^u \cap \mathcal{C}_i$.
Note that $\hat{\mathcal{C}}^0 = Q_0$.

It is well-known that if $\sigma_i, \sigma'_i$ are two unit cycles at $i \in Q_0$, then $\sigma_i = \sigma'_i$ in $A$; we will denote by $\sigma_i \in A$ the unique unit cycle at $i$.
Furthermore, the sum $\sum_{i \in Q_0}\sigma_i$ is in the center of $A$.

\subsection{Matrix ring homomorphisms}

Let $A = kQ/I$ be a dimer algebra.
By \cite[Lemma 2.1]{B2}, there are algebra homomorphisms from $A$ to the matrix rings
\begin{equation} \label{te}
\tau: A \to M_{|Q_0|}\left(k[\mathcal{S}]\right) \ \ \ \ \text{ and } \ \ \ \ \eta: A \to M_{|Q_0|}\left(k[\mathcal{P}] \right)
\end{equation}
defined on $i \in Q_0$ and $a \in Q_1$ by
\begin{align*}
\tau(e_i) = e_{ii}, \ \ \ \ \ \ & \eta(e_i) = e_{ii},\\
\tau(a) = e_{\operatorname{h}(a),\operatorname{t}(a)} \prod_{\substack{x \in \mathcal{S}: \\ x \ni a}} x, \ \ \ \ \ \ & \eta(a) = e_{\operatorname{h}(a),\operatorname{t}(a)} \prod_{\substack{x \in \mathcal{P}: \\ x \ni a}} x,
\end{align*}
and extended multiplicatively and $k$-linearly to $A$.

For each $i,j \in Q_0$, denote by
$$\bar{\tau}: e_jAe_i \to k[\mathcal{S}] \ \ \ \ \text{ and } \ \ \ \ \bar{\eta}: e_jAe_i \to k[ \mathcal{P}]$$
the respective $k$-linear maps defined on $p \in e_jAe_i$ by
$$\tau(p) = \bar{\tau}(p)e_{ji} \ \ \ \ \text{ and } \ \ \ \ \eta(p) = \bar{\eta}(p)e_{ji}.$$
Denote by $\sigma$ the product of all the variables,
$$\sigma := \bar{\tau}(\sigma_i) = \prod_{x \in \mathcal{S}} x \ \ \ \ \text{ or } \ \ \ \ \sigma := \bar{\eta}(\sigma_i) = \prod_{x \in \mathcal{P}}x.$$

We will make use of the following results from \cite{B2}.
See Figure \ref{prop2.1} for reference.

\begin{Proposition} \label{Bastuff}
Let $A$ be a nondegenerate dimer algebra.
Given a path $p$, denote by $\overbar{p}$ either $\bar{\eta}(p)$ or $\bar{\tau}(p)$.\\
$\bullet$ First suppose $p,q \in e_jAe_i$ are distinct paths satisfying
\begin{equation*} \label{p+}
\operatorname{t}(p^+) = \operatorname{t}(q^+) \ \ \text{ and } \ \ \operatorname{h}(p^+) = \operatorname{h}(q^+).
\end{equation*}
Then:
\begin{enumerate}
 \item There is an $m \in \mathbb{Z}$ such that $\overbar{p} = \overbar{q} \sigma^m$.
 \item $p,q$ is a non-cancellative pair if and only if $\bar{\eta}(p) = \bar{\eta}(q)$.
\end{enumerate}
$\bullet$ Now suppose either $A$ is cancellative, or there is some $u \in \mathbb{Z}^2 \setminus 0$ such that $\hat{\mathcal{C}}_i^u \not = \emptyset$ for each $i \in Q_0$.
\begin{enumerate}[resume]
 \item Let $p \in \mathcal{C}$.
 Then $p \in \hat{\mathcal{C}}$ if and only if $\sigma \nmid \overbar{p}$.
 \item If $p,q \in \hat{\mathcal{C}}^u$, then $\overbar{p} = \overbar{q}$.
\end{enumerate}
$\bullet$ Finally, suppose $A$ is cancellative.
Then:
\begin{enumerate}[resume]
 \item For each $u \in \mathbb{Z}^2$ and $i \in Q_0$, the set $\hat{\mathcal{C}}^u_i$ is nonempty.
 \item Let $p \in \mathcal{C}^u$.
Then $u = 0$ if and only if there is some $\ell \geq 0$ such that $\overbar{p} = \sigma^{\ell}$.
 \item For each $i,j \in Q_0$, we have
$$e_iAe_i = Ze_i \cong Z \cong \bar{\tau}\left(e_iAe_i \right) = \bar{\tau}\left(e_jAe_j\right).$$
 \item $A$ is a finitely generated $Z$-module, and $Z \cong \bar{\tau}(e_iAe_i)$ is a finitely generated $k$-algebra, generated by $\sigma$ and a finite set of monomials in $k[\mathcal{S}]$ not divisible by $\sigma$.
\end{enumerate}
\end{Proposition}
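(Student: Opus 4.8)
Since this proposition collects facts established in \cite{B2}, the plan is to match each item with the corresponding statement there; below I sketch the mechanisms, so that the roles of the representations $\tau,\eta$ and of the set $\hat{\mathcal{C}}$ are visible, and I flag the steps that carry the real weight.

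For (1) and (3) the engine is a counting argument. Fix a perfect matching $D$; for a path $p$ let $\deg_D(p)$ count the arrows of $p$ in $D$. Since $D$ meets each unit cycle in exactly one arrow, inserting or deleting a unit cycle changes $\deg_D$ by $\pm 1$ uniformly in $D$, so on cycles the difference $\deg_D-\deg_{D'}$ is a homological quantity. Thus if $p,q\in e_jAe_i$ have coincident tails and heads in $Q^+$ --- hence equal homology classes --- the integer $m:=\deg_D(p)-\deg_D(q)$ is independent of $D$ (it is the signed number of unit cells of $\mathcal{R}_{p,q}$), and comparing the defining products yields $\bar{\tau}(p)=\bar{\tau}(q)\sigma^m$ and $\bar{\eta}(p)=\bar{\eta}(q)\sigma^m$. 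Specializing $q$ to a trivial path gives (3): a homologically trivial cycle $p$ has $\deg_D(p)=m\ge 0$ for every $D$, so $\overbar{p}=\sigma^m$.

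For (2), one implication is immediate: if $rp=rq\ne 0$ in $A$, applying $\eta$ and cancelling $\bar{\eta}(r)\ne 0$ in the domain $k[x_D\mid D\in\mathcal{P}]$ forces $\bar{\eta}(p)=\bar{\eta}(q)$, and $pr=qr\ne 0$ is symmetric. The converse is the technical core of \cite{B2} and is the step I expect to be the main obstacle: from $p\ne q$ with $\bar{\eta}(p)=\bar{\eta}(q)$ --- so $m=0$ in (1) and $\mathcal{R}_{p,q}$ is tiled with net zero cells --- one must complete $p$, equivalently $q$, to a cycle that is a product of unit cycles and then invoke the defining relations (each unit cycle at a vertex is the single element $\sigma_i$) to identify the two completions, producing the desired $r$. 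This demands genuine control of non-cancellative pairs and of cyclic subpaths in $Q^+$, which is also what (4) and (5) need.

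For (4) and (5), work under the hypothesis of the second bullet. If some cyclic permutation of some representative of $p$ has a cyclic subpath $c$, then $c^+$ is a closed sub-loop, so $c\in\mathcal{C}^0$ and $\sigma\mid\overbar{c}\mid\overbar{p}$ by (3), using that $\overbar{\,\cdot\,}$ is multiplicative and invariant under cyclic permutation (as $k[x_D]$ is commutative); conversely, $\sigma\mid\overbar{p}$ says $\deg_D(p)\ge 1$ for all $D$, i.e.\ no matching avoids the lift of $p$, and a matching-existence (Hall-type) argument then forces a repeated vertex, hence a cyclic subpath, in a suitable cyclic rearrangement --- this gives (4). For (5), two $\sigma$-free cycles in a common class $\mathcal{C}^u$ are joined by cell-flips of net area $0$, so (1) gives them equal image. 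Finally, for (6)--(8) assume $A$ cancellative: applying $\bar{\eta}$ into the commutative ring $k[x_D]$ and invoking (2) gives $pq=qp$ for cycles $p,q$ at $i$ (their lifts share tail and head), so $e_iAe_i$ is commutative and $e_iAe_i=Ze_i\cong Z$ (this last identification being \cite{B2}); injectivity of $\bar{\tau}$ on $e_iAe_i$ then follows from (2) once one knows --- from the matching-degree analysis of \cite{B2} --- that $\bar{\tau}$ already determines the homology class, and $\bar{\tau}(e_iAe_i)=\bar{\tau}(e_jAe_j)$ follows since every cycle is a minimal cycle times unit cycles while, by (6), each class has a minimal cycle at each vertex and, by (5), these have equal image. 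For (8), $\bar{\tau}(e_iAe_i)\subset k[x_D\mid D\in\mathcal{S}]$ is then the affine semigroup ring generated by $\sigma$ and the finitely many monoid-generators among the $\bar{\tau}(p)$ with $\sigma\nmid\bar{\tau}(p)$, and $A=\bigoplus_{i,j}e_jAe_i$ is module-finite over $Z$ since each $e_jAe_i$ is spanned over $e_iAe_i\cong Z$ by the finitely many shortest paths $i\to j$, every path being such a path times a cycle.
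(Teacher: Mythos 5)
The paper's ``proof'' of this proposition is a pure list of citations to \cite{B2} (Lemma 4.3, Lemma 4.8, Proposition 4.10, Proposition 4.27, Proposition 5.14), so your proposal, which tries to unpack the mechanisms behind those citations, is doing something genuinely different and potentially more illuminating. Your degree-counting account of (1) and (3), the easy direction of (2), the commutativity-of-$k[x_D]$ argument for $e_iAe_i$ being commutative in (7), and the correct flagging of the converse of (2) as the technical core, are all sound and match the spirit of \cite{B2}.

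There are, however, several spots where the sketch as written would not go through. For (5), you invoke (1) for $p,q\in\hat{\mathcal{C}}^u$, but (1) requires $p^+,q^+$ to have \emph{coincident} endpoints; if $p\in\hat{\mathcal{C}}^u_i$ and $q\in\hat{\mathcal{C}}^u_j$ with $i\ne j$ this fails, and one must first bridge with an auxiliary path $r:i\to j$ and compare $rp$ with $qr$ (whose lifts do coincide) before cancelling $\overbar{r}$. For the converse of (4), the ``matching-existence (Hall-type) argument'' is an assertion, not an argument; nothing you say explains why ``every simple matching meets $p$'' forces a repeated vertex in some cyclic permutation of the lift, and I do not believe a bare Hall argument does this -- the proof in \cite{B2} goes through factoring $\overbar{p}=\sigma\cdot g$, realizing $g$ by an actual cycle $q$ (this is where the second-bullet hypothesis enters), and comparing $p$ with $\sigma_i q$ via (1)--(2). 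Relatedly, in the forward direction of (4) you silently assume $m\ge 1$ when writing $\sigma\mid\overbar{c}$: for $\bar{\eta}$ this follows from nondegeneracy, but for $\bar{\tau}$ a nontrivial homologically trivial subcycle $c$ could a priori avoid every simple matching, and ruling this out is exactly what the second-bullet hypothesis (or cancellativity, via \cite[Theorem 4.24]{B2}) is for -- a point worth making explicit since the statement is carefully scoped to that hypothesis. Finally, in (8) the phrase ``finitely many shortest paths $i\to j$'' is not literally a finite set across all homology classes; what is true is that finitely many paths $i\to j$ whose $\bar{\tau}$-image is not divisible by $\sigma$ generate $e_jAe_i$ over $Z$, and that finiteness is itself part of the content of \cite[Proposition 5.14]{B2} rather than an easy observation.
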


\begin{proof}
The claims are respectively (1), (2): \cite[Lemma 4.3]{B2}; (3), (4): \cite[Lemma 4.8, Proposition 4.21]{B2}; (5): \cite[Proposition 4.11]{B2}; (6): \cite[Lemma 4.29]{B2}; and (7), (8): \cite[Propositions 4.28, 5.14, Theorem 5.9.3]{B2}. 
\end{proof}

\begin{figure}
$$\begin{array}{cc}
\xy 0;/r.68pc/:
(-6,0)*+{\text{\footnotesize{$i$}}}="1";(-2,3)*{}="2";(2,3)*{}="4";
(6,0)*+{\text{\footnotesize{$j$}}}="8";
(-2,-3)*{}="5";(2,-3)*{}="7";
{\ar@{-}@/^/^{}"1";"2"};
{\ar@{-}@/^.1pc/^p"2";"4"};
{\ar@/^/"4";"8"};
{\ar@{-}@/_/_{}"1";"5"};
{\ar@{-}@/_.1pc/_{q}"5";"7"};
{\ar@/_/_{}"7";"8"};
\endxy
&
\xy 0;/r.8pc/:
(-6,-2)*+{\text{\footnotesize{$i$}}}="1";(-6,3)*+{\text{\footnotesize{$j$}}}="2";(6,3)*+{\text{\footnotesize{$j$}}}="4";
(6,-2)*+{\text{\footnotesize{$i$}}}="8";
{\ar@/^.8pc/^{p_1}"1";"2"};
{\ar@/^.8pc/^{p_3}"2";"1"};
{\ar^{p_2}"2";"4"};
{\ar@/^.8pc/^{p_3}"4";"8"};
{\ar@/^.8pc/^{p_1}"8";"4"};
\endxy
\\
(i) & (ii)
\end{array}$$
\caption{Examples for Proposition \ref{Bastuff}.
All paths shown are paths of positive length in the cover $Q^+$ (with the superscripts $^+$ omitted).
In (i), the paths $p, q \in e_jAe_i$ satisfy $\operatorname{t}(p^+) = \operatorname{t}(q^+)$ and $\operatorname{h}(p^+) = \operatorname{h}(q^+)$.
In (ii), the path $p = p_3p_2p_1$ is in $\mathcal{C}_i \setminus \hat{\mathcal{C}}_i$, since the cyclic permutation $(p_1p_3p_2)^+$ of $p^+$ has a nontrivial cyclic subpath.
Furthermore, $p_3p_1$ is in $\mathcal{C}^0$, and therefore $\overbar{p_3p_1} = \sigma^{\ell}$ for some $\ell \geq 1$.}  
\label{prop2.1}
\end{figure}

\subsection{Cyclic contractions} \label{cyclic section}

Let $A = kQ/I$ and $A' = kQ'/I'$ be dimer algebras, and suppose $Q'$ is obtained from $Q$ by contracting a set of arrows $Q_1^* \subset Q_1$ to vertices.
This operation defines a $k$-linear map of path algebras
$$\psi: kQ \to kQ'.$$
If $\psi(I) \subseteq I'$, then $\psi$ induces a $k$-linear map of dimer algebras, $\psi: A \to A'$, called a \textit{contraction}.
Consider the algebra homomorphism
$$\tau: A' \to M_{|Q'_0|}(k[\mathcal{S}'])$$
defined in (\ref{te}).
We call a contraction $\psi: A \to A'$ \textit{cyclic} if $A'$ is cancellative and
\begin{equation*} \label{cycle algebra}
S := k \left[ \cup_{i \in Q_0} \bar{\tau}\psi(e_iAe_i) \right] = k \left[ \cup_{i \in Q'_0} \bar{\tau}(e_iA'e_i) \right].
\end{equation*}

The algebra $S$, called the \textit{cycle algebra} of $A$, is independent of the choice of cyclic contraction $\psi$ \cite[Theorem 3.14]{B3}.
Furthermore, every nondegenerate dimer algebra admits a cyclic contraction \cite[Theorem 1.1]{B1}.

Finally, we remark that if $\psi: A \to A'$ is a cyclic contraction, then the ghor algebra of $\Lambda'$ of $A'$ is simply $A'$ itself since $A'$ is cancellative,
$$\Lambda' = A'/\left\langle p - q \ | \ p,q \text{ a non-cancellative pair} \right\rangle = A'.$$
Furthermore, the cycle algebra $S$ and the algebra generated by the intersection
$$R = k\left[ \cap_{i \in Q_0} \bar{\tau}\psi(e_iAe_i) \right]$$
are the centers of the ghor algebras of $A'$ and $A$ respectively \cite[Theorem 5.9.3]{B2}.

\section{Proof of main theorem}

Throughout, let $A$ be a nondegenerate dimer algebra.

\begin{Definition} \rm{
We say paths $p,q \in Q_{\geq 0}$ are a non-cancellative pair if they are representatives of a non-cancellative pair $p+I,q+I \in A$.
We say a non-cancellative pair $p,q \in Q_{\geq 0}$ is minimal if, given any non-cancellative pair $s,t \in Q_{\geq 0}$ satisfying
$$\mathcal{R}_{s,t} \subseteq \mathcal{R}_{p,q},$$
we have $\{s,t\}  = \{p,q\}$.
Let $r \in Q_{\geq 0}$ be a path for which $rp \equiv rq$ (resp.\ $pr \equiv qr$).
We say $r$ is \textit{minimal} if there is no proper subpath $r'$ of $r$ satisfying $r'p \equiv r'q$ ($pr' \equiv qr'$).
}\end{Definition}

\begin{Lemma} \label{p-q in I}
Let $p,q \in Q_{\geq 0}$ be a minimal non-cancellative pair, and let $r$ be a minimal path satisfying $rp \equiv rq$ (resp.\ $pr \equiv qr$).
If the rightmost (leftmost) arrow subpath of $r^+$ lies in $\mathcal{R}_{p,q}$, then $r^+$ lies wholly in $\mathcal{R}_{p,q}$, and only meets $p^+$ or $q^+$ at its tail (head).
\end{Lemma}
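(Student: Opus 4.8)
The plan is to prove the one genuinely combinatorial assertion --- that the lift of $r$ attached to the common head meets $p^+\cup q^+$ only at that vertex --- and then deduce containment in $\mathcal R_{p,q}$ formally. Write $h:=\operatorname{h}(p^+)=\operatorname{h}(q^+)$ and $t:=\operatorname{t}(p^+)=\operatorname{t}(q^+)$, and let $\widetilde r$ denote the lift of $r$ with tail $h$ (this is the lift meant by ``$r^+$'' in the statement, since it is the one for which $\widetilde r\,p^+$ and $\widetilde r\,q^+$ are defined); for the variant $pr\equiv qr$ one uses instead the lift with head $t$, a case obtained from the first by reversing every arrow of $Q$, so I treat only $rp\equiv rq$. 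Assume for now that $\widetilde r$ meets $p^+\cup q^+$ only at $h$: then $\widetilde r\setminus\{h\}$ is connected and disjoint from $p^+\cup q^+$, and since its first arrow lies in $\mathcal R_{p,q}$ by hypothesis it cannot lie in an unbounded complementary region; hence it lies in $\operatorname{int}\mathcal R_{p,q}$ and so $\widetilde r\subseteq\mathcal R_{p,q}$. I also assume $p^+$ and $q^+$ meet only at $t$ and $h$, folding the reduction of the pinched case into the bookkeeping below.

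Now suppose, for contradiction, that $\widetilde r$ meets $p^+\cup q^+$ at a vertex $\ne h$, and let $v$ be the first such along $\widetilde r$; write $\widetilde r=\widetilde r_2\widetilde r_1$ with $\widetilde r_1\colon h\to v$. Say $v\in p^+$, and factor $p^+=p_2^+p_1^+$ with $p_1^+\colon t\to v$ and $p_2^+\colon v\to h$; let $r_1,r_2$ be the images of $\widetilde r_1,\widetilde r_2$ in $Q$, so that $r=r_2r_1$.

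The heart of the proof is a dichotomy applied to $r_1p$ and $r_1q$. Since $r_2(r_1p)\equiv r_2(r_1q)=rp\equiv rq\ne 0$, either $r_1p\equiv r_1q$, or $r_1p$ and $r_1q$ form a non-cancellative pair with left multiplier $r_2$. Suppose first that $r_2$ is nontrivial. In the former case $r_1$ is a proper initial subpath of $r$ with $r_1p\equiv r_1q$, contradicting minimality of $r$. In the latter case $(r_1p)^+=\widetilde r_1\,p^+$ and $(r_1q)^+=\widetilde r_1\,q^+$ differ only by the common final segment $\widetilde r_1$, hence bound the same region as $p^+$ and $q^+$, so $\mathcal R_{r_1p,\,r_1q}=\mathcal R_{p,q}$; but $r_1$ is a nonempty path (it contains the first arrow of $\widetilde r$, which exists by hypothesis), so $\{r_1p,r_1q\}\ne\{p,q\}$, contradicting minimality of the pair $(p,q)$. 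Therefore $r_2$ is trivial; that is, $\widetilde r$ itself ends on $p^+\cup q^+$, at $v\in p^+$.

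This last case is the one I expect to be the main obstacle. Here $\widetilde r\colon h\to v$ is a chord of $\mathcal R_{p,q}$, and $(rp)^+=\widetilde r\,p^+$ factors as $\gamma\cdot p_1^+$ where $\gamma$ is the cycle $v\xrightarrow{p_2^+}h\xrightarrow{\widetilde r}v$ bounding the proper subregion of $\mathcal R_{p,q}$ on the $p_2$ side of the chord; thus $\pi(\gamma)\in\mathcal C^0$ and $rq\equiv\gamma\cdot p_1$. One has to turn the relation $rq\equiv\gamma\cdot p_1$ into a contradiction with the minimality of $r$ or of $(p,q)$ --- for instance by producing, inside the region bounded by $\gamma$, either a non-cancellative pair whose region is strictly contained in $\mathcal R_{p,q}$ or a path strictly shorter than $r$ that still identifies $p$ with $q$, plausibly via an induction on the number of faces of $\mathcal R_{p,q}$. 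The remaining bookkeeping --- the reduction of the pinched case, and the check that each region $\mathcal R_{\bullet,\bullet}$ invoked above is a genuine compact region --- is routine and would be recorded in passing.
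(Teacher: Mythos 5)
The argument for the subcase in which $\widetilde{r}$ continues past $v$ (your Cases A and B with $\widetilde{r}_2$ nontrivial) is plausible, but the subcase you flag as the ``main obstacle''---where $\widetilde{r}$ terminates at $v$ on $p^+\cup q^+$---is not a bookkeeping detail to be deferred; it carries essentially all the content of the lemma, and your sketch for it (``produce a non-cancellative pair or a shorter $r$, plausibly by induction on faces'') does not constitute a proof. This is a genuine gap. In fact your ``easy'' dichotomy, comparing $r_1p$ with $r_1q$, only ever yields $\mathcal{R}_{r_1p,r_1q}=\mathcal{R}_{p,q}$, so it buys nothing in the terminal case where the proper-subpath contradiction with the minimality of $r$ is unavailable.

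The paper circumvents the case split entirely by factoring differently: it writes $r=r_3r_2r_1$ where $r_2$ is the single \emph{arrow} of $r$ whose head is $v$ (so $r_3$ may well be trivial, and nothing in the argument depends on it). It then compares $p_1\sigma^m_{\operatorname{t}(p)}$ with $r_2r_1q$, where $m\geq 1$ arises from $\overbar{r_2r_1p_2}=\sigma^m$; since $p_1$ is a proper subpath of $p$, the region $\mathcal{R}_{p_1\sigma^m,\,r_2r_1q}$ is \emph{strictly} contained in $\mathcal{R}_{p,q}$, so if the two paths form a non-cancellative pair, minimality of $(p,q)$ is violated immediately. The remaining possibility, $p_1\sigma^m_{\operatorname{t}(p)}\equiv r_2r_1q$, is the real work: one takes the path $s$ completing $r_2$ to a unit cycle $sr_2$ inside $\mathcal{R}_{p_1,r_2r_1q}$, shows $r_1p_2\equiv s\sigma^n_{\operatorname{t}(s)}$ by another appeal to minimality (here $n\geq 0$ since $s$ is part of a unit cycle), and concludes that $s\sigma^n_{\operatorname{t}(s)}p_1$ and $r_1q$ form a non-cancellative pair strictly inside $\mathcal{R}_{p,q}$, using the minimality of $r$ to guarantee $r_1p\not\equiv r_1q$. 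None of this appears in your sketch, and it is precisely the machinery needed to close the case you left open.
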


\begin{proof}
Set $\sigma := \prod_{x \in \mathcal{P}} x$, and for a path $s$, set $\overbar{s} := \bar{\eta}(s)$.

Suppose that $rp \equiv rq$, and the rightmost arrow subpath of $r^+$ lies in $\mathcal{R}_{p,q}$.
Assume to the contrary that $r^+$ meets $p^+$ at a vertex other than its tail; see Figure \ref{interiorlemma2}.
Then $p$ and $r$ factor into paths 
$$p = p_2p_1 \ \ \ \text{ and } \ \ \ r = r_3r_2r_1,$$
where $p_2, r_1,r_3 \in Q_{\geq 0}$ are paths (of unspecified length), $r_2 \in Q_1$ is an arrow, and $\operatorname{h}(p_1) = \operatorname{h}(r_2)$.

Since $r_2$ is an arrow, the cycle $r_2r_1p_2 \in \mathcal{C}^0$ is nontrivial.
Therefore there is some $m \geq 1$ such that $\overbar{r_2r_1p_2} = \sigma^m$, by Proposition \ref{Bastuff}.1.
In particular, the paths $p_1 \sigma_i^m$ and $r_2r_1q$ satisfy
$$\overbar{p}_1 \sigma^m = \overbar{r_2r_1p_2} \, \overbar{p}_1 = \overbar{r_2r_1p} = \overbar{r_2r_1} \, \overbar{p} \stackrel{\textsc{(i)}}{=} \overbar{r_2r_1} \, \overbar{q} = \overbar{r_2r_1q},$$
where (\textsc{i}) holds by Proposition \ref{Bastuff}.2.
Consequently, either $p_1 \sigma_{\operatorname{t}(p)}^m$ and $r_2r_1q$ are equal modulo $I$, or they form a non-cancellative pair, by Proposition \ref{Bastuff}.2.

(i) First suppose  $p_1 \sigma_{\operatorname{t}(p)}^m$ and $r_2r_1q$  form a non-cancellative pair.
By choosing a representative of the unit cycle $\sigma_{\operatorname{t}(p)}^+$ that lies in $\mathcal{R}_{p_1,r_2r_1q}$, there is proper containment
$$\mathcal{R}_{p_1 \sigma_{\operatorname{t}(p)}^m, r_2r_1q} \subset \mathcal{R}_{p,q}.$$
But then the non-cancellative pair $p,q$ is not minimal, contrary to assumption.

(ii) So suppose  $p_1 \sigma_{\operatorname{t}(p)}^m \equiv r_2r_1q$.
Since $r_2$ is an arrow, there is a path $s$ for which $(sr_2)^+$ is a unit cycle contained in $\mathcal{R}_{p_1,r_2r_1q}$.
By Proposition \ref{Bastuff}.1, there is an $n \in \mathbb{Z}$ such that
\begin{equation} \label{was a merry fellow}
\overbar{r_1p_2} = \overbar{s}\sigma^n.
\end{equation}
Since $s$ is a subpath of a unit cycle, we have $n \geq 0$.
Whence,
\begin{equation} \label{hey ho howdy}
\overbar{s\sigma_{\operatorname{t}(s)}^n p_1} = \overbar{r_1p_2p_1} = \overbar{r_1p} \stackrel{\textsc{(i)}}{=} \overbar{r_1q},
\end{equation}
where (\textsc{i}) holds by Proposition \ref{Bastuff}.2.

Now (\ref{was a merry fellow}) implies that either $r_1p_2$ and $s \sigma_{\operatorname{t}(s)}^n$ are equal modulo $I$, or they form a non-cancellative pair, by Proposition \ref{Bastuff}.2.
Since $p,q$ is minimal and $s^+$ lies in $\mathcal{R}_{p_1,r_2r_1q}$, we have
\begin{equation} \label{snort}
r_1p_2 \equiv s \sigma_{\operatorname{t}(s)}^n.
\end{equation}
Thus
\begin{equation}\label{dozy dozy}
s\sigma_{\operatorname{t}(s)}^n p_1 \stackrel{\textsc{(i)}}{\equiv} r_1p_2p_1 \equiv r_1p \stackrel{\textsc{(ii)}}{\not \equiv} r_1q,
\end{equation}
where (\textsc{i}) holds by (\ref{snort}); and (\textsc{ii}) holds since $r$ has minimal length and $r_2$ is a nontrivial path.
Therefore (\ref{hey ho howdy}) and (\ref{dozy dozy}) together imply that the paths $s\sigma_{\operatorname{t}(s)}^n p_1$ and $r_1q$ form a non-cancellative pair, by Proposition \ref{Bastuff}.2.
But then $p,q$ is not minimal, again contrary to assumption.
\end{proof}

\begin{figure}
$$\xy 0;/r.8pc/:
(-6,0)*{\cdot}="1";(-2,3)*{}="2";(2,3)*{\cdot}="4";
(6,0)*{\cdot}="8";
(-2,-3)*{}="5";(2,-3)*{}="7";
(2,6)*{\cdot}="9";
(2,0)*{\cdot}="10";
{\ar@[green]_{r_3}"4";"9"};{\ar@{..>}"4";"9"};
{\ar@{-}@/^/^{}@[red]"1";"2"};{\ar@/^.1pc/^{p_1}@[red]"2";"4"};{\ar@/^/^{p_2}@[red]"4";"8"};
{\ar@{-}@/_/_{}@[blue]"1";"5"};{\ar@{-}@/_.1pc/_{q}@[blue]"5";"7"};{\ar@/_/_{}@[blue]"7";"8"};
{\ar@[green]^{r_1}"8";"10"};{\ar@{..>}"8";"10"};{\ar@[green]_{r_2}"10";"4"};{\ar@{..>}"10";"4"};
{\ar@/_1.7pc/_s"4";"10"};
\endxy$$
\caption{Setup for Lemma \ref{p-q in I}.  
Here, $p_1,p_2,q,r_1,r_3,s \in Q_{\geq 0}$ are paths (of unspecified length); $r_2 \in Q_1$ is an arrow; $r_2s$ is a unit cycle; and $r_2r_1p_2$ lifts to a nontrivial cycle in the cover.
The paths $p = p_2p_1$, $q$, $r = r_3r_2r_1$, are drawn in red, blue, and green respectively.}
\label{interiorlemma2}
\end{figure}

Set
$$Q^{\mathcal{S}}_1 := \left\{ a \in Q_1 \, | \, a \not \in x \text{ for all } x \in \mathcal{S} \right\}.$$

\begin{Proposition} \label{min length}
Let $p,q \in Q_{\geq 0}$ be a minimal non-cancellative pair, and let $r$ be a minimal path such that $rp \equiv rq$ (resp.\ $pr \equiv qr$).
If the rightmost (leftmost) arrow subpath of $r^+$ lies in $\mathcal{R}_{p,q}$, then each arrow subpath of $r$ is in $Q_1^{\mathcal{S}}$.
\end{Proposition}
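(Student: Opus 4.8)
The plan is to argue by contradiction, following the template of the proof of Lemma~\ref{p-q in I}: supposing that some arrow subpath $a$ of $r$ lies in a simple matching $D\in\mathcal S$, I will build from $D$ a non-cancellative pair $s,t$ with $\mathcal R_{s,t}\subseteq\mathcal R_{p,q}$ and $\{s,t\}\neq\{p,q\}$ (in most subcases even $\mathcal R_{s,t}\subsetneq\mathcal R_{p,q}$), which contradicts the minimality of $p,q$. As in Lemma~\ref{p-q in I} I will work with $\overbar{s}:=\bar\tau(s)$, put $\sigma:=\prod_{D'\in\mathcal S}x_{D'}$ so that $x_D\mid\sigma$, and treat the case $rp\equiv rq$, the case $pr\equiv qr$ being symmetric. (It is worth recording that minimality of $p,q$ and of $r$, together with the identity $\mathcal R_{r'p,\,r'q}=\mathcal R_{p,q}$ valid for every initial segment $r'$ of $r$, already forces $r$ to be a single arrow; this streamlines everything below but is not strictly needed for the argument.)

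First I would fix the geometry. By Lemma~\ref{p-q in I}, $r^+$ lies inside $\overline{\mathcal R_{p,q}}$ and meets $p^+\cup q^+$ only at its tail, so the chosen arrow $a^+$ of $r^+$ lies in $\overline{\mathcal R_{p,q}}$ and at least its head $v$ lies in the open interior of $\mathcal R_{p,q}$; moreover the two faces of $Q^+$ along $a^+$ lie in $\overline{\mathcal R_{p,q}}$, so the ``short side'' of the unit cycle on $a$ has a representative inside $\mathcal R_{p,q}$, exactly as the path $s$ is produced in the proof of Lemma~\ref{p-q in I}.

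Next I would use that $D$ is \emph{simple}: the subquiver $Q_1\setminus D$ contains a cycle through every vertex of $Q$, hence a bi-infinite path $\Delta$ in $Q^+$ through $v$ containing no arrow of $D^+$ — bi-infinite because a cycle hitting every vertex of $Q$ is not nullhomotopic (alternatively one argues with a sufficiently long finite arc). Since $\mathcal R_{p,q}$ is compact, $\Delta$ exits it, so its maximal subpath $\Delta_0$ lying in $\overline{\mathcal R_{p,q}}$ runs from a vertex $w_1$ of $p^+\cup q^+$, through $v$, to a vertex $w_2$ of $p^+\cup q^+$, and $\Delta_0$ contains no arrow of $D$. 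Factoring $p^+$ and $q^+$ at $w_1,w_2$ and concatenating the appropriate arcs with $\Delta_0$ (inserting, where necessary, the interior representative of the unit cycle on $a$ from the previous paragraph) produces two paths $s,t$ with common tail and head and with $\mathcal R_{s,t}\subseteq\mathcal R_{p,q}$. By Proposition~\ref{Bastuff}.1 we have $\overbar s=\overbar t\,\sigma^{\ell}$ for some $\ell\in\mathbb Z$; tracking the $D$-arrows — $\Delta_0$ and the inserted arc (after deleting its unique $D$-arrow $a$) avoid $D$, and the relation $rp\equiv rq$ is used to compare the $D$-content of the leftover arcs of $p^+$ and $q^+$ — forces $\ell=0$, whence $\bar\eta(s)=\bar\eta(t)$ and $s,t$ is a non-cancellative pair by Proposition~\ref{Bastuff}.2. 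As $\{s,t\}\neq\{p,q\}$, this contradicts minimality of $p,q$; in the borderline subcases where $s\equiv t$ one shifts a unit cycle to recover a genuine non-cancellative pair strictly inside $\mathcal R_{p,q}$, precisely as in steps (i)–(ii) of the proof of Lemma~\ref{p-q in I}.

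The step I expect to be the main obstacle is this last one: organizing the case distinction according to where $w_1$ and $w_2$ land on $p^+\cup q^+$, and verifying in each case that the construction genuinely produces a non-cancellative pair whose bounded region is contained in $\mathcal R_{p,q}$ while being different from $\{p,q\}$ — in particular that the $D$-bookkeeping really forces $\ell=0$ rather than leaving it unconstrained, which is what lets one invoke Proposition~\ref{Bastuff}.2. This is the same delicate region/minimality argument as in Lemma~\ref{p-q in I}, the one genuinely new ingredient being that $\Delta_0$ avoids $D$ whereas $\sigma$ is divisible by $x_D$; the only auxiliary point requiring its own (short) justification is the non-nullhomotopy of a vertex-exhausting cycle in $Q_1\setminus D$.
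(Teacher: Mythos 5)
Your overall plan (argue by contradiction, use the simplicity of $D$ to manufacture a $D$-avoiding arc, and exploit the region structure inside $\mathcal{R}_{p,q}$) is the same germ as the paper's proof, but the way you try to close the argument differs from what the paper does, and the difference hides genuine gaps.

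The paper does \emph{not} try to directly exhibit a non-cancellative pair strictly inside $\mathcal{R}_{p,q}$ and contradict minimality of $p,q$. Instead it uses the minimality of $p,q$ twice as a \emph{tool} — namely to upgrade $\bar\eta$-equalities to actual equivalences modulo $I$ (if they were not equivalent they would form a smaller non-cancellative pair, contrary to minimality) — and then derives $r_1p\equiv r_1q$ with $r_1$ a proper subpath of $r$, which contradicts minimality of $r$, not of $p,q$. Concretely, it picks a $D$-free path $t$ from $\operatorname{t}(p)$ to $\operatorname{h}(r_2)$, takes its leftmost subpath entering $\mathcal{R}_{p,q}$ from $p^+$ (so $p=p_2p_1$), and compares $st$ (with $s$ the short side of the unit cycle on $r_2$) against $r_1p_2$: since both $t$ and $s$ avoid $D$ we get $\sigma\nmid\overbar{st}$, hence $m\geq 0$ in $\sigma^m\overbar{st}=\overbar{r_1p_2}$, and then minimality of $p,q$ forces genuine equivalences that propagate to $r_1p\equiv r_1q$.

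Your version has three concrete gaps. First, the ``$D$-bookkeeping forces $\ell=0$'' step is asserted but not an argument: Proposition~\ref{Bastuff}.1 only gives $\overbar{s}=\overbar{t}\sigma^{\ell}$ for some $\ell\in\mathbb{Z}$, and to pin down $\ell$ you need a structural reason such as $\sigma\nmid\overbar{s}$ \emph{and} $\sigma\nmid\overbar{t}$; but one of your two paths is built from arcs of $p^+$ and $q^+$, which can and generally do cross $D$, so $x_D$ (hence possibly $\sigma$) divides its image and the sign of $\ell$ is not controlled. The relation $rp\equiv rq$ by itself does not balance the $D$-content of the two arcs the way you want. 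Second, the construction of $s,t$ from $\Delta_0$ and the boundary arcs is underspecified because $Q^+$ is directed: there is no guarantee that $\Delta_0$, oriented as a subpath of the cycle hitting every vertex, can be concatenated with arcs of $p^+$ or $q^+$ to give two coherently oriented paths with the same head and tail, nor that the resulting region is a proper subregion rather than all of $\mathcal{R}_{p,q}$. Third, your fallback for the case $s\equiv t$ (``shift a unit cycle'') is not what steps (i)–(ii) of Lemma~\ref{p-q in I} do — those are a case distinction inside a different comparison, not a device for converting an equivalence into a non-cancellative pair — and in general one cannot insert a unit cycle into only one side without changing the $\sigma$-exponent, destroying the $\bar\eta$-equality you just argued for. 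The net effect is that the final contradiction you aim for (against minimality of $p,q$) is never secured, whereas the paper's reformulation — aim for a contradiction against minimality of $r$ — is exactly what makes the region/minimality bookkeeping close.
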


\begin{proof}
Set $\sigma := \prod_{x \in \mathcal{P}} x$, and for a path $s$, set $\overbar{s} := \bar{\eta}(s)$.

Suppose $rp \equiv rq$ and the rightmost arrow subpath of $r^+$ lies in $\mathcal{R}_{p,q}$.
Assume to the contrary that there is a simple matching $x$ that contains an arrow subpath of $r$; then $r$ factors into paths $r = r_3r_2r_1$, where $r_2 \in Q_1 \setminus Q_1^{\mathcal{S}}$ and $r_1,r_3 \in Q_{\geq 0}$.

Since $x$ is simple, there is a path $t'$ from $\operatorname{t}(p)$ to $\operatorname{h}(r_2)$ whose arrow subpaths are not contained in $x$.
By Lemma \ref{p-q in I}, the head of $r_2^+$ lies in the interior of $\mathcal{R}_{p,q}$.
Therefore there is a leftmost nontrivial subpath $t^+$ of $t'^+$ contained in $\mathcal{R}_{p,q}$ with tail on $p^+$ or $q^+$; suppose $t^+$ has tail on $p^+$.
Then $p$ factors into paths $p = p_2p_1$, where $\operatorname{h}(p_1^+) = \operatorname{t}(t^+)$, as shown in Figure \ref{cases}.

Consider the path $s$ such that $(r_2s)^+$ is a unit cycle that lies in $\mathcal{R}_{t,r_2r_1p_2}$.
(Note that $s$ and $r_1p$ may share common subpaths.)
Then $x \nmid \overbar{s}$ since $r_2 \in x$.
Furthermore, $x \nmid \overbar{t}$ since no arrow subpath of $t$ is contained in $x$.
Thus
$$x \nmid \overbar{s} \, \overbar{t} = \overbar{st}.$$
Therefore
\begin{equation} \label{st}
\sigma \nmid \overbar{st}.
\end{equation}

Since $t^+$ lies in $\mathcal{R}_{p,q}$ and $\operatorname{h}(t^+)$ lies in the interior of $\mathcal{R}_{p,q}$, we have proper containment
$$\mathcal{R}_{r_1p_2,st} \subset \mathcal{R}_{p,q}.$$
Furthermore, by (\ref{st}) and Proposition \ref{Bastuff}.1, there is some $m \geq 0$ such that
$$\sigma^m \overbar{st} = \overbar{r_1p_2}.$$
Thus, by the minimality of $p,q$ and Proposition \ref{Bastuff}.2,
$$\sigma^m_{\operatorname{h}(s)}st \equiv r_1p_2.$$
Whence
\begin{equation} \label{stp1}
\sigma^m_{\operatorname{h}(s)}stp_1 \equiv r_1p.
\end{equation}
Therefore
$$\sigma^m \overbar{stp_1} = \overbar{r_1p} = \overbar{r_1} \, \overbar{p} \stackrel{\textsc{(i)}}{=} \overbar{r_1} \, \overbar{q} = \overbar{r_1q},$$
where (\textsc{i}) holds by Proposition \ref{Bastuff}.2.
Furthermore, since $t^+$ lies in $\mathcal{R}_{p,q}$ and $\operatorname{h}(t^+)$ lies in the interior of $\mathcal{R}_{p,q}$, there is proper containment
$$\mathcal{R}_{stp_1,r_1q} \subset \mathcal{R}_{p,q}.$$
Thus, again by the minimality of $p,q$ and Proposition \ref{Bastuff}.2,
\begin{equation} \label{r1q}
\sigma^m_{\operatorname{h}(s)}stp_1 \equiv r_1q.
\end{equation}
Consequently,
$$r_1p \stackrel{\textsc{(i)}}{\equiv} \sigma^m_{\operatorname{h}(s)}stp_1 \stackrel{\textsc{(ii)}}{\equiv} r_1q,$$
where (\textsc{i}) holds by (\ref{stp1}), and (\textsc{ii}) holds by (\ref{r1q}).
But $r_2$ is an arrow.
Therefore $r$ is not minimal.
\end{proof}

\begin{figure}
$$\xy 0;/r.8pc/:
(-6,0)*{\cdot}="1";(-2,3)*{\cdot}="2";(2,3)*{}="4";
(6,0)*{\cdot}="8";
(-2,-3)*{}="5";(2,-3)*{}="7";
(3,0)*{\cdot}="10";
(0,0)*{\cdot}="0";
{\ar@/^/^{}@[red]^{p_1}"1";"2"};{\ar@{-}@/^.1pc/@[red]"2";"4"};{\ar@/^/^{p_2}@[red]"4";"8"};
{\ar@{-}@/_/_{}@[blue]"1";"5"};{\ar@{-}@/_.1pc/_{q}@[blue]"5";"7"};{\ar@/_/_{}@[blue]"7";"8"};
{\ar@[green]^{r_1}"8";"10"};{\ar@{..>}"8";"10"};{\ar@[green]^{r_2}"10";"0"};{\ar@{..>}"10";"0"};
{\ar_t"2";"0"};
{\ar@/^1.7pc/_s"0";"10"};
\endxy$$
\caption{Setup for Proposition \ref{min length}.  
Here, $p_1,p_2, q, r_1, r_3, s \in Q_{\geq 0}$ are paths; $r_2 \in Q_1 \setminus Q_1^{\mathcal{S}}$ is an arrow; and $r_2s$ is a unit cycle.
The paths $p = p_2p_1$, $q$, $r = r_2r_1$, are drawn in red, blue, and green respectively.}
\label{cases}
\end{figure}

\begin{Remark} \label{eats and sleeps and not much more} \rm{
If $p,q \in Q_{\geq 0}$ is a non-cancellative pair that is not minimal, and $r$ is a minimal path satisfying $rp \equiv rq$, then it is possible that no representative of $r^+$ lies in $\mathcal{R}_{p,q}$, and no arrow subpath of $r$ is in $Q_1^{\mathcal{S}}$.
Such an example is given in Figure \ref{s's2}.
}\end{Remark}

\begin{figure}
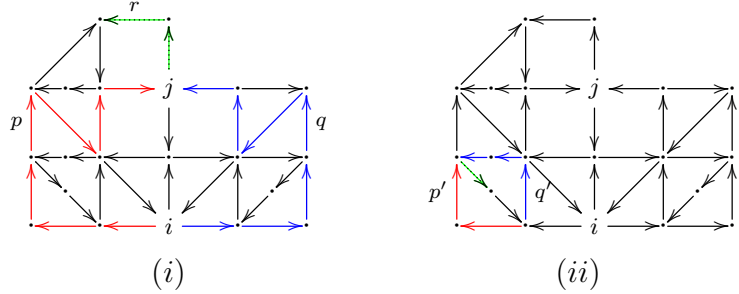

$$\begin{array}{ccc}
\xy 0;/r.31pc/:
(-14,-7)*{\cdot}="1";(-7,-7)*{\cdot}="2";(0,-7)*+{\text{\scriptsize{$i$}}}="3";(7,-7)*{\cdot}="4";(14,-7)*{\cdot}="5";
(-14,0)*{\cdot}="6";(-7,0)*{\cdot}="7";(0,0)*{\cdot}="8";(7,0)*{\cdot}="9";(14,0)*{\cdot}="10";
(-14,7)*{\cdot}="11";(-7,7)*{\cdot}="12";(0,7)*+{\text{\scriptsize{$j$}}}="13";(7,7)*{\cdot}="14";(14,7)*{\cdot}="15";
(-7,14)*{\cdot}="16";(0,14)*{\cdot}="17";
(-10.5,-3.5)*{\cdot}="18";(-10.5,0)*{\cdot}="19";(-10.5,7)*{\cdot}="20";
(10.5,-3.5)*{\cdot}="21";
{\ar@{->}@[red]"2";"1"};{\ar@{->}@[red]"3";"2"};{\ar@{->}@[blue]"3";"4"};{\ar@{->}@[blue]"4";"5"};
{\ar@{->}"7";"19"};{\ar@{->}"19";"6"};{\ar@{->}"8";"7"};{\ar@{->}"8";"9"};{\ar@{->}"9";"10"};
{\ar@{->}"20";"11"};{\ar@{->}"12";"20"};{\ar@{->}@[red]"12";"13"};{\ar@{->}@[blue]"14";"13"};{\ar@{->}"14";"15"};
{\ar@{->}_r@[green]"17";"16"};{\ar@{..>}"17";"16"};
{\ar@{->}@[red]"1";"6"};{\ar@{->}"2";"7"};{\ar@{->}"3";"8"};{\ar@{->}"4";"9"};{\ar@{->}@[blue]"5";"10"};
{\ar@{->}^p@[red]"6";"11"};
{\ar@{->}@[red]"7";"12"};{\ar@{->}"13";"8"};{\ar@{->}@[blue]"9";"14"};
{\ar@{->}_q@[blue]"10";"15"};
{\ar@{->}"16";"12"};{\ar@{->}@[green]"13";"17"};{\ar@{..>}"13";"17"};
{\ar@{->}"6";"18"};{\ar@{->}"18";"2"};{\ar@{->}"7";"3"};{\ar@{->}"9";"3"};{\ar@{->}"10";"21"};{\ar@{->}"21";"4"};
{\ar@{->}@[red]"11";"7"};{\ar@{->}@[blue]"15";"9"};
{\ar@{->}"11";"16"};
\endxy
& \ \ \ \ &
\xy 0;/r.31pc/:
(-14,-7)*{\cdot}="1";(-7,-7)*{\cdot}="2";(0,-7)*+{\text{\scriptsize{$i$}}}="3";(7,-7)*{\cdot}="4";(14,-7)*{\cdot}="5";
(-14,0)*{\cdot}="6";(-7,0)*{\cdot}="7";(0,0)*{\cdot}="8";(7,0)*{\cdot}="9";(14,0)*{\cdot}="10";
(-14,7)*{\cdot}="11";(-7,7)*{\cdot}="12";(0,7)*+{\text{\scriptsize{$j$}}}="13";(7,7)*{\cdot}="14";(14,7)*{\cdot}="15";
(-7,14)*{\cdot}="16";(0,14)*{\cdot}="17";
(-10.5,-3.5)*{\cdot}="18";(-10.5,0)*{\cdot}="19";(-10.5,7)*{\cdot}="20";
(10.5,-3.5)*{\cdot}="21";
{\ar@[red]"2";"1"};{\ar"3";"2"};{\ar"3";"4"};{\ar"4";"5"};
{\ar@[blue]"7";"19"};{\ar@[blue]"19";"6"};{\ar"8";"7"};{\ar"8";"9"};{\ar"9";"10"};
{\ar@{->}"20";"11"};{\ar@{->}"12";"20"};{\ar@{->}"12";"13"};{\ar@{->}"14";"13"};{\ar@{->}"14";"15"};
{\ar@{->}"17";"16"};
{\ar@{->}^{p'}@[red]"1";"6"};{\ar@{->}@[blue]_{q'}"2";"7"};{\ar@{->}"3";"8"};{\ar@{->}"4";"9"};{\ar@{->}"5";"10"};
{\ar@{->}"6";"11"};
{\ar@{->}"7";"12"};{\ar@{->}"13";"8"};{\ar@{->}"9";"14"};
{\ar@{->}"10";"15"};
{\ar@{->}"16";"12"};{\ar@{->}"13";"17"};
{\ar@{->}@[green]"6";"18"};{\ar@{..>}"6";"18"};
{\ar@{->}"18";"2"};{\ar@{->}"7";"3"};{\ar@{->}"9";"3"};{\ar@{->}"10";"21"};{\ar@{->}"21";"4"};
{\ar@{->}"11";"7"};{\ar@{->}"15";"9"};
{\ar@{->}"11";"16"};
\endxy \\
(i) & & (ii)
\end{array}$$
\caption{Setup for Remark \ref{eats and sleeps and not much more}.
In case (i), the paths $p,q$, drawn in red and blue, form a non-cancellative pair that is not minimal.
The green path $r$ is a minimal path satisfying $rp \equiv rq$ and lies outside of $\mathcal{R}_{p,q}$.
In case (ii), the paths $p',q'$, again drawn in red and blue, form a non-cancellative pair that \textit{is} minimal.
The green path $r'$ is a minimal path satisfying $r'p' \equiv r'q'$ and, in contrast to case (i), necessarily lies inside of $\mathcal{R}_{p',q'}$.}
\label{s's2}
\end{figure}

\begin{Corollary} \label{yooohooo}
If $A$ is non-cancellative, then $Q_1^{\mathcal{S}} \not = \emptyset$.
\end{Corollary}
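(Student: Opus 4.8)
The plan is to feed non‑cancellativity into Proposition~\ref{min length}: from a minimal non‑cancellative pair and a minimal completing path we will extract an arrow that, by that proposition, lies in no simple matching, so $Q_1^{\mathcal S}\neq\emptyset$. First I would fix a non‑cancellative pair and, among all of them, choose $p,q\in Q_{\geq 0}$ minimal in the sense of the Definition preceding Lemma~\ref{p-q in I}. Such a minimal pair exists: whenever minimality fails there is a non‑cancellative pair $s,t$ with $\mathcal R_{s,t}\subseteq\mathcal R_{p,q}$ and $\{s,t\}\neq\{p,q\}$, and then the inclusion is proper (a region determines its two bounding arcs, so $\mathcal R_{s,t}=\mathcal R_{p,q}$ would force $\{s,t\}=\{p,q\}$); since $\mathcal R_{p,q}$ is a finite union of closed $2$‑cells of the cell structure on $Q^+$, there can be no infinite descent. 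Next, since $A$ is non‑cancellative there is a path $r$ with $rp\equiv rq\neq 0$ or $pr\equiv qr\neq 0$; after relabeling assume the former and take $r$ of minimal length among such paths. Then $r$ is nontrivial (else $p=rp=rq=q$), and it is a minimal path in the sense of that Definition: a proper subpath $r'$ of $r$ with $r'p\equiv r'q$ would satisfy $r'p\neq 0$ (otherwise $rp\equiv 0$) and hence contradict length‑minimality. Note also that $\operatorname{t}(p^+)=\operatorname{t}(q^+)$ by the unit–square convention, and $\operatorname{h}(p^+)=\operatorname{h}(q^+)$ because equal paths in $A$ have equal lifted endpoints; so Proposition~\ref{Bastuff} applies to $p,q$.

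The main obstacle is verifying the hypothesis of Proposition~\ref{min length}, namely that the rightmost arrow subpath $a$ of $r^+$ lies in $\mathcal R_{p,q}$ — exactly the behaviour of minimal pairs recorded in Remark~\ref{eats and sleeps and not much more}(ii). I would prove it by a minimality argument in the style of Lemma~\ref{p-q in I}. Suppose $a^+$ points out of $\mathcal R_{p,q}$ at the common head $\operatorname{h}(p^+)=\operatorname{h}(q^+)$, and write $r=\tilde r\,a$. Then $ap$ and $aq$ have coincident lifted tails and heads, and $\bar\eta(ap)=\bar\eta(a)\bar\eta(p)=\bar\eta(a)\bar\eta(q)=\bar\eta(aq)$ using $\bar\eta(p)=\bar\eta(q)$ (Proposition~\ref{Bastuff}.2, as $p,q$ is non‑cancellative). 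Appending the common final arrow $a^+$ to $p^+$ and to $q^+$ leaves the bounded region unchanged, so $\mathcal R_{ap,aq}=\mathcal R_{p,q}$. Hence if $ap\neq aq$ and $\operatorname{len}(r)\geq 2$, then $ap,aq$ would be a non‑cancellative pair (witnessed by the nontrivial path $\tilde r$, via Proposition~\ref{Bastuff}.2) with $\mathcal R_{ap,aq}\subseteq\mathcal R_{p,q}$, forcing $\{ap,aq\}=\{p,q\}$ — impossible, since equal paths in $A$ have equal lifted endpoints whereas $(ap)^+$ has a head distinct from those of $p^+$ and $q^+$. So $ap\equiv aq$, and then $ap\neq 0$ (else $rp\equiv 0$), so $a$ is a proper subpath of $r$ with $ap\equiv aq$, contradicting minimality of $r$ when $\operatorname{len}(r)\geq 2$. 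The borderline case $r=a$ (so $ap\equiv aq\neq 0$ outright) is the genuinely delicate point: here I would use that $\sum_i\sigma_i$ is central to obtain $p\,\sigma_{\operatorname{t}(p)}\equiv\sigma_{\operatorname{h}(p)}\,p\equiv\sigma_{\operatorname{h}(p)}\,q\equiv q\,\sigma_{\operatorname{t}(p)}\neq 0$, pass to this reflected relation, and reduce to the inward case by a finer analysis at the corner vertex $\operatorname{h}(p^+)$ together with the ``leftmost'' forms of the earlier results; this is where the minimality of the pair $p,q$ must be exploited most carefully, and is the step where I expect a short separate lemma, or a citation to \cite{B2}, to be cleanest.

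Granting the hypothesis, Proposition~\ref{min length} shows that every arrow subpath of $r$ lies in $Q_1^{\mathcal S}$. Since $r$ is nontrivial it has at least one arrow subpath, and therefore $Q_1^{\mathcal S}\neq\emptyset$, which is the assertion. If instead only the reflected relation $pr\equiv qr\neq 0$ is available, the argument is the same using the ``leftmost'' versions of Lemma~\ref{p-q in I} and Proposition~\ref{min length}.
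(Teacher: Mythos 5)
Your approach and the paper's share the same skeleton: reduce to a minimal non-cancellative pair $p,q$, feed a suitable path $r$ with $rp\equiv rq$ into Proposition~\ref{min length}, and conclude that the arrows of $r$ lie in $Q_1^{\mathcal{S}}$. But the two proofs diverge at the crucial step of arranging the hypothesis of Proposition~\ref{min length} — that the rightmost arrow of $r^+$ lie in $\mathcal{R}_{p,q}$ — and this is exactly where your argument has a gap.

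You take an \emph{arbitrary} length-minimal path $r$ with $rp\equiv rq$ and try to \emph{prove} that its rightmost arrow must point into $\mathcal{R}_{p,q}$, by contradiction. Your contradiction works when $r=\tilde r\,a$ with $\tilde r$ nontrivial: if $a$ pointed outward you would get $ap\equiv aq$ and hence a shorter witness, violating minimality of $r$. But when $r=a$ is a single arrow, $ap\equiv aq$ is simply the hypothesis on $r$ and yields no contradiction, and nothing you have written rules out an outward-pointing single-arrow witness for a minimal pair. You flag this case as "genuinely delicate" and defer it to an unwritten lemma; as it stands, this is a real hole in the argument, not a routine omission.

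The paper sidesteps this difficulty entirely by \emph{constructing} $r$ rather than analyzing an arbitrary one. For any non-cancellative pair there is some $m\geq 1$ with $\sigma^m_{\operatorname{h}(p)}p=\sigma^m_{\operatorname{h}(p)}q$, and one may choose a representative of $\sigma_{\operatorname{h}(p)}$ whose lift lies inside $\mathcal{R}_{p,q}$. Passing to the minimal rightmost subpath of that representative of $\sigma^m_{\operatorname{h}(p)}$ gives a minimal $r$ whose first arrow lies in $\mathcal{R}_{p,q}$ \emph{by construction}, so Proposition~\ref{min length} applies with no further case analysis. If you want to rescue your version, the missing piece is a separate lemma handling the one-arrow case; but the cleaner fix is simply to use $\sigma^m_{\operatorname{h}(p)}$ as the witness, as the paper does.
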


\begin{proof}
For any non-cancellative pair $p,q \in A$, there is some $m \geq 1$ such that $\sigma^m_{\operatorname{h}(p)} p = \sigma^m_{\operatorname{h}(p)} q$.
Furthermore, there is a representative of $\sigma_{\operatorname{h}(p)}$ whose lift lies in $\mathcal{R}_{p,q}$.
The corollary then follows from Proposition \ref{min length}.
\end{proof}

Corollary \ref{yooohooo} is used in \cite{B1} to show that every nondegenerate dimer algebra admits a cyclic contraction.
Our standing assumption that $A$ is nondegenerate thus implies that $A$ admits a cyclic contraction.
\textit{For the remainder of the article, let $\psi: A \to A'$ be a cyclic contraction on a set of arrows $Q_1^*$ unless stated otherwise.}
Set $\sigma := \prod_{x \in \mathcal{S}'}$, and for each $s \in e_jAe_i$, $t \in e_{\ell}A'e_k$, set
\begin{equation} \label{zwb}
\overbar{s} := \bar{\tau}_{\psi}(s) := \bar{\tau} \psi(s), \ \ \ \ \ \overbar{t} := \bar{\tau}(t).
\end{equation}

We will use the following results from \cite{B7}.

\begin{Lemma} \label{Bbstuff}
Let $p,q \in e_iAe_i$ be cycles.
\begin{enumerate}
 \item If $\overbar{p} = \overbar{q}$ and $p-q \in Ze_i$, then
 $$p^2 = pq = qp = q^2.$$
 \item There is an $N \geq 1$ such that for each $n \geq 1$, $p^n \sigma_i^N$ is in $Ze_i$.
 \item If $\overbar{p}$ is in $R$, then there is an $n \geq 1$ such that $p^n$ is in $Ze_i$.
 \item The nilradical of $Z$ consists of the central elements annihilated by $\psi$,
 $$\operatorname{nil}Z = \operatorname{ker}\psi \cap Z.$$
\end{enumerate}
\end{Lemma}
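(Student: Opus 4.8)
The strategy is to reduce everything to the structure of the vertex corner rings $e_iAe_i$ relative to the cyclic contraction $\psi\colon A\to A'$; throughout I write $\bar{x}:=\bar{\tau}_{\psi}(x)$ for $x$ in a corner of $A$. Three elementary facts are used repeatedly. First, each $e_iAe_i$ is commutative (a standard property of dimer algebras), so $pq=qp$ is automatic in (1). Second, for $x\in e_jAe_i$ one has $\bar{x}=0$ if and only if $\psi(x)=0$: since $\bar{\tau}_{\psi}=\bar{\tau}\circ\psi$ and $A'$ is cancellative, $\bar{\tau}$ is injective on the relevant corner of $A'$ (for diagonal corners this is Proposition \ref{Bastuff}.7; off the diagonal it follows from Proposition \ref{Bastuff}.1--2, which force two paths with equal $\bar{\tau}$ to coincide once $A'$ is cancellative). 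Third, if $z\in Z$ then $\psi(z)\in Z(A')$, and $\bar{\tau}_{\psi}(ze_i)=0$ forces $\psi(z)e_{\psi(i)}=0$, hence $\psi(z)=0$ because $A'$ is prime and so $Z(A')$ embeds into each of its corners. The single nontrivial ingredient is structural: for every $i$, $\operatorname{nil}(e_iAe_i)=\ker\psi\cap e_iAe_i$, this ideal squares to zero, and $e_iAe_i/\operatorname{nil}(e_iAe_i)\cong\bar{\tau}_{\psi}(e_iAe_i)$ is a finitely generated domain. I would import this from the corner-ring analysis of \cite{B5,B7}; it is where the real work lies, and granting it the rest is formal.

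For part (4), the inclusion $\operatorname{nil}Z\subseteq\ker\psi\cap Z$ is immediate: if $z^n=0$ then $\psi(z)^n=0$ in $Z(A')\cong S\subset k[x_D\mid D\in\mathcal{S}']$, a domain, so $\psi(z)=0$. For the reverse, take $z\in\ker\psi\cap Z$ and decompose $z=\sum_i z_i$ with $z_i:=ze_i\in e_iAe_i$; then $\psi(z_i)=\psi(z)\psi(e_i)=0$ and $z_iz_j=z^2e_ie_j=0$ for $i\neq j$ by centrality, so $z^m=\sum_i z_i^m$ and it suffices that each $z_i\in\ker\psi\cap e_iAe_i=\operatorname{nil}(e_iAe_i)$ be nilpotent, which is exactly the structural input. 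For part (1), put $z:=p-q\in Ze_i$ and lift it to a central $\tilde{z}$ with $\tilde{z}e_i=z$; by the facts above and $\bar{p}=\bar{q}$ we get $\psi(\tilde{z})=0$, so by (4) $\tilde{z}\in\operatorname{nil}Z$ and $z$ lies in $\mathfrak{n}:=\ker\psi\cap e_iAe_i$, with $\mathfrak{n}^2=0$. Using the splitting $e_iAe_i=\bar{\tau}_{\psi}(e_iAe_i)\oplus\mathfrak{n}$ and writing $p=\bar{p}+p_1$, $q=\bar{q}+q_1$, the relations $\bar{p}=\bar{q}$ and $\mathfrak{n}^2=0$ give $pz=\bar{p}\,z=qz$, and then $\bar{p}\,z=0$ because $z\in\mathfrak{n}\cap Ze_i$ and the corner structure forces such elements to be annihilated by $\bar{\tau}_{\psi}(e_iAe_i)$ (this is the precise point at which compatibility of $Ze_i$ with the corner decomposition is used). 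Hence $pz=qz=0$, that is, $p^2=pq=qp=q^2$.

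For parts (2) and (3), I would first obtain a conductor estimate: there is a uniform $N\geq1$ with $\sigma^N S\subseteq\bigcap_j\bar{\tau}_{\psi}(e_jAe_j)=R$, since $S$, $R$, and each $\bar{\tau}_{\psi}(e_jAe_j)$ are finitely generated $k$-algebras with a common fraction field (Proposition \ref{Bastuff}.8 and the definition of the cycle algebra) and $\sigma$ lies in the conductor. As $\overline{p^n\sigma_i^N}=\bar{p}^{\,n}\sigma^N\in\sigma^N S$, this element lies in every $\bar{\tau}_{\psi}(e_jAe_j)$; choosing preimages $y_j\in e_jAe_j$ with $\bar{\tau}_{\psi}(y_j)=\bar{p}^{\,n}\sigma^N$ and $y_i=p^n\sigma_i^N$, one checks that for every arrow $a\colon j\to\ell$ the difference $ay_j-y_\ell a$ lies in $\ker\tau_{\psi}=\ker\psi$, so $\sum_j y_j$ is central modulo $\ker\psi$; the discrepancy is then cleared using that $\ker\psi\cap e_iAe_i$ is square-zero and that $N$ was taken large, which gives $p^n\sigma_i^N\in Ze_i$ with $N$ independent of $n$. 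For (3), if $\bar{p}\in R$ no power of $\sigma$ is needed to reach each corner, so the same lifting argument applied to $p^n$ — passing to the homotopy algebra $\Lambda$, where $e_i\Lambda e_i=Re_i$, to kill the kernel discrepancy for $n$ large, and then lifting back along $A\to\Lambda$ — shows $p^n\in Ze_i$ for some $n\geq1$. The main obstacle throughout is the passage from "correct $\bar{\tau}_{\psi}$-image" to "genuine central element of $A$", i.e.\ controlling $\ker\psi$ inside the corner rings; once the structural fact of the first paragraph is available, everything else is bookkeeping with Proposition \ref{Bastuff}.
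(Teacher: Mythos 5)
The paper's ``proof'' of this lemma is a one-line citation to results in \cite{B7} (Lemma 3.4, Proposition 5.4, and Theorem 1.1.1 there), so the paper does not actually derive these facts; it imports them. Your proposal is therefore an attempt to reconstruct the content of \cite{B7}, and there are several genuine gaps in that reconstruction.

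The load-bearing ``structural fact'' you posit --- that $\mathfrak{n}:=\ker\psi\cap e_iAe_i$ squares to zero and that $e_iAe_i$ splits as $\bar{\tau}_{\psi}(e_iAe_i)\oplus\mathfrak{n}$ --- is not established anywhere in this paper or in the cited \cite{B2,B3,B1}, and it is not a routine consequence of anything quoted. The claim $\mathfrak{n}^2=0$ is particularly suspect: nilpotent ideals need not square to zero, and there is no structural reason given for $\ker\psi$ to behave this way in the corners. Your argument for part (1) leans entirely on $\mathfrak{n}^2=0$, and also conflates $\bar{p}\in k[x_D\mid D\in\mathcal{S}']$ with an element of $e_iAe_i$: the ``decomposition'' $p=\bar{p}+p_1$ does not type-check, since $\bar{\tau}_\psi(e_iAe_i)$ is a subring of a polynomial ring, not a subspace of $e_iAe_i$. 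Similarly, ``$\bar{p}\,z=0$ because $z\in\mathfrak{n}\cap Ze_i$ and the corner structure forces such elements to be annihilated by $\bar{\tau}_{\psi}(e_iAe_i)$'' is unmotivated and not a statement that appears in the paper.

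For parts (2)--(3) there are two further problems. First, your conductor assertion $\sigma^N S\subseteq R$ for a uniform $N$ overstates what can hold: Theorem \ref{non-finitely generated} of this very paper shows that when $A$ is non-cancellative, $S$ is an \emph{infinitely} generated $R$-module, so the conductor of $R$ in $S$ can be trivial and no uniform $N$ clearing all of $S$ need exist. (The statement to be proved only fixes a single cycle $p$, which is weaker, but your argument invokes the stronger, false, global claim.) Second, the identification $e_i\Lambda e_i=Re_i$ that you use to finish part (3) is wrong in the non-cancellative case: $\tau_\psi$ is injective on $\Lambda$, so $e_i\Lambda e_i\cong\bar{\tau}_\psi(e_iAe_i)$, whereas $R=k[\bigcap_j\bar{\tau}_\psi(e_jAe_j)]$ is in general a strictly smaller subalgebra; the equality $e_iAe_i=Ze_i$ of Proposition \ref{Bastuff}.7 requires cancellativity, and the analogous statement for $\Lambda$ is not asserted anywhere here. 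Since the paper defers this lemma wholesale to \cite{B7} (still in preparation), the honest course is to do the same; as written, your reconstruction has unproved hypotheses and at least one false intermediate claim.
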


\begin{proof}
The claims are respectively (1): \cite[Proposition 3.4]{B7}; (2), (3): \cite[Proposition 5.4]{B7}; and (4): \cite[Theorem 3.5]{B7}.
\end{proof}

\begin{Theorem} \label{Q* in Qdagger}
If $\psi: A \to A'$ is a cyclic contraction, then no contracted arrow is contained in a simple matching of $A$,
$$Q_1^* \subseteq Q_1^{\mathcal{S}}.$$
\end{Theorem}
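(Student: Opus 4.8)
My plan is to argue by contradiction, using Proposition \ref{min length}, which is precisely a criterion for membership in $Q_1^{\mathcal{S}}$. Suppose $a \in Q_1^*$ is contracted by $\psi$ yet lies in some simple matching $D \in \mathcal{S}$. First I would observe that $A$ must be non-cancellative: a genuine contraction identifies the distinct vertices $\operatorname{t}(a)$ and $\operatorname{h}(a)$, so $\psi$ is not an isomorphism, whereas if $A$ were cancellative it would be module-finite over its center (Proposition \ref{Bastuff}.8) and prime, so the nonzero kernel of the surjection $\psi$ would meet $Z$ — impossible, since by Lemma \ref{Bbstuff}.4 that intersection is $\operatorname{nil}Z$, which is $0$ for a prime ring. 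Hence non-cancellative pairs exist. The heart of the proof is then to realize $a$ as an arrow of a \emph{minimal} cancellation witness: to produce a minimal non-cancellative pair $p,q \in Q_{\geq 0}$ and a minimal path $r$ with $rp \equiv rq$ (or $pr \equiv qr$) such that $a$ is an arrow subpath of $r$ and the rightmost (leftmost) arrow subpath of $r^+$ lies in $\mathcal{R}_{p,q}$. Granting this, Proposition \ref{min length} forces every arrow subpath of $r$ — in particular $a$ — to lie in $Q_1^{\mathcal{S}}$, contradicting $a \in D$; this gives the whole theorem.

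To build the required configuration I would exploit that $a$ being contractible means there is cancellation through $a$ in $A$ that disappears in $A'$: since $\psi(a) = e_v$ for the merged vertex $v$, any two paths in $A$ differing only by an occurrence of $a$ (completed so as to acquire a common tail and head in the covering quiver) become equal in the cancellative algebra $A'$, hence are either equal in $A$ or form a non-cancellative pair by Proposition \ref{Bastuff}.2. Choosing such a pair that is genuinely non-cancellative and sits as close to $a$ as possible, I would then impose the geometric hypothesis of Proposition \ref{min length} by the device from the proof of Corollary \ref{yooohooo}: replace the cancellation witness by a sufficiently high power of a unit cycle $\sigma_{\operatorname{h}(p)}$, choosing a representative whose lift lies inside $\mathcal{R}_{p,q}$ so that the rightmost arrow of the lifted witness lies there too, and finally pass to a minimal non-cancellative subpair inside $\mathcal{R}_{p,q}$ together with a minimal subwitness while retaining $a$.

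The main obstacle is exactly this last extraction: after minimalizing $p,q$ and $r$ one must be sure the contracted arrow $a$ still occurs among the arrows of $r$, since shrinking $\mathcal{R}_{p,q}$ and shortening $r$ could a priori discard it, so one needs a careful local analysis at $a$ — for instance using that the two unit cycles through $a$ coincide in $A$ (both equal $\sigma_{\operatorname{t}(a)}$) and that the region they bound together with neighbouring faces is a union of unit cycles, forcing any cancellation ``visible at $a$'' to route through $a$, while cancellativity of $A'$ says $a$ cannot simply be bypassed after contraction. An alternative that avoids Proposition \ref{min length} altogether is to reformulate ``$a$ lies in a simple matching of $A$'' as ``some simple $A$-module of dimension $1^{Q_0}$ is annihilated by $a$'', and to prove instead that every simple $A$-module of dimension $1^{Q_0}$ is the pullback along the surjection $\psi$ of a simple $A'$-module: on such a pullback the contracted arrow acts as the identity of the idempotent $\psi(\operatorname{t}(a))$, never as zero, which is the contradiction. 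I expect this representation-theoretic route to be the cleaner to write down, the obstacle now being to show the pullback map is surjective onto the $1^{Q_0}$-dimensional simples — which, as above, ultimately comes down to ruling out simple matchings containing a contracted arrow.
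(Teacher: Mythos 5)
Your two routes both run into gaps that you yourself flag, and neither is the paper's argument. The paper proceeds by a direct module-theoretic contradiction that sidesteps Proposition~\ref{min length} entirely. From $\delta\in Q_1^*\cap D$ with $D\in\mathcal{S}$, choose $s$ with $s\delta$ a unit cycle; since $\delta$ is contracted, $\psi(s)$ is a unit cycle in $Q'$, so $\overbar{s}=\sigma$. Then a cycle $p$ on $Q\setminus D$ through every vertex containing $s$ satisfies $\sigma\mid\overbar p$; using that $S$ is generated by $\sigma$ and monomials prime to $\sigma$ (Proposition~\ref{Bastuff}.8) and that the contraction is cyclic, one finds a cycle $q$ in $Q$ with $\overbar q=\overbar p\,\sigma^{-1}$, hence $\overbar{q\sigma_i}=\overbar p\in R$. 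Lemma~\ref{Bbstuff}.3 and \ref{Bbstuff}.1 then upgrade this $\overbar{\ \cdot\ }$-equality to an honest identity $p^{2n}=(q\sigma_i)^{2n}$ in $A$. Evaluating on the simple module $V$ supported on $Q\setminus D$ gives the contradiction: $p^{2n}$ acts nontrivially because it avoids $D$, while $(q\sigma_i)^{2n}$ annihilates $V$ because $\sigma_i$ meets the perfect matching $D$.

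Against this, your first route has a genuine hole: Proposition~\ref{min length} only constrains the arrows of a \emph{minimal} witness $r$ for a \emph{minimal} non-cancellative pair with its lift inside $\mathcal{R}_{p,q}$, and there is no mechanism forcing the contracted arrow $\delta$ to survive into such an $r$ after all the minimizations — indeed $\delta$ need not be anywhere near any minimal pair. (Note also a small misreading of the contrapositive: the hypothesis to contradict is $\delta\notin Q_1^{\mathcal S}$, i.e.\ $\delta$ \emph{is} in some simple matching, and Proposition~\ref{min length} would place the arrows of $r$ in $Q_1^{\mathcal S}$, which is the set of arrows in \emph{no} simple matching, so one would need $\delta$ to actually be an arrow of $r$ to get anything.) Your second route you already correctly diagnose as circular: proving that every $1^{Q_0}$-dimensional simple pulls back along $\psi$ is essentially the statement you are trying to prove. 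The missing ingredient that the paper supplies, and that neither of your routes touches, is the quantitative control coming from Lemma~\ref{Bbstuff}: that $\overbar{p}=\overbar{q\sigma_i}$ together with centrality forces $p^{2n}=(q\sigma_i)^{2n}$ as elements of $A$, turning an equality of monomials in a polynomial ring into a path identity one can then test on a module.
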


\begin{proof}
Assume to the contrary that there is an arrow $\delta \in Q_1^* \setminus Q_1^{\mathcal{S}}$; let $x \in \mathcal{S}$ be a simple matching containing $\delta$.
Let $s \in Q_{\geq 0}$ be a path for which $s \delta$ is a unit cycle.
Since $\delta$ is contracted to a vertex, $\psi(s)$ is a unit cycle in $Q'$.
Whence
\begin{equation} \label{divides s}
\overbar{s} = \bar{\tau}(\psi(s)) = \sigma.
\end{equation}

Since $x$ is simple, there is a cycle $p$ of the subquiver $Q \setminus x$ that passes through each vertex of $Q$ and contains $s$ as a subpath.
Since $s$ is a subpath of $p$, $\psi(s)$ is a subpath of $\psi(p)$.
In particular, $\overbar{s} \mid \overbar{p}$.
Therefore (\ref{divides s}) implies
\begin{equation} \label{sigma divides}
\sigma \mid \overbar{p}.
\end{equation}

Let $u \in \mathbb{Z}^2$ be such that $\psi(p) \in \mathcal{C}'^u$.
Since $A'$ is cancellative, there is a cycle $t \in \hat{\mathcal{C}}'^u_{\operatorname{t}(\psi(p))}$, by Proposition \ref{Bastuff}.5. 
Furthermore, there is an $\ell \in \mathbb{Z}$ such that
$$\overbar{p} = \overbar{\tau} \psi(p) \stackrel{\textsc{(i)}}{=} \bar{\tau}(t) \sigma^{\ell} = \overbar{t} \sigma^{\ell},$$
where (\textsc{i}) holds by Proposition \ref{Bastuff}.1.
But $\sigma \nmid \overbar{t}$, by Proposition \ref{Bastuff}.3.
Whence, $\ell \geq 0$.
Thus, $\ell \geq 1$ by (\ref{sigma divides}).
Therefore
$$\overbar{p} \sigma^{-1} = \overbar{t} \sigma^{\ell-1} =\bar{\tau}(t \sigma_{\operatorname{t}(t)}^{\ell-1}) \in S.$$
In particular, there is a cycle $q$ in $Q$ satisfying
\begin{equation*} \label{q p/sigma}
\overbar{q} = \overbar{p}\sigma^{-1},
\end{equation*}
since the contraction $\psi: A \to A'$ is cyclic.

Set $i := \operatorname{t}(q)$.
Since $p$ contains each vertex in $Q$, the monomial $\overbar{p} = \overbar{q \sigma_i}$ is in $R$, and so we may assume $\operatorname{t}(p) = i$.
Thus there is some $n_1,n_2 \geq 1$ such that the cycles $p^{n_1}$ and $(q \sigma_i)^{n_2}$ are in $Ze_i$, by Lemma \ref{Bbstuff}.3.
Set $n := n_1n_2$.
Then
$$p^n - (q \sigma_i)^n \in Ze_i.$$
This, together with $\overbar{p^n} = \overbar{(q \sigma_i)^n}$, implies
\begin{equation} \label{rp contradiction}
p^{2n} = (q\sigma_i)^{2n},
\end{equation}
by Lemma \ref{Bbstuff}.1.

Finally, let $V$ be an $A$-module with support $Q \setminus x$ and dimension $1^{Q_0}$.
Then $p^{2n}$ does not annihilate $V$ since $p^{2n}$ is cycle of the subquiver $Q \setminus x$.
However, $\sigma_i$ contains an arrow in $x$ since $x$ is a perfect matching.
Thus $(q \sigma_i)^{2n}$ annihilates $V$.
But this contradicts (\ref{rp contradiction}).
\end{proof}

\begin{Lemma} \label{simple matching lemma}
If an arrow annihilates a simple $A$-module of dimension $1^{Q_0}$, then it is contained in a simple matching of $A$.
\end{Lemma}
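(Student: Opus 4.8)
The plan is to read off, from the given module, a perfect matching that is forced to avoid its support, and then check that this perfect matching is simple. Realize the given simple module $V$ as a representation of $Q$, assigning to each arrow $b$ a scalar $c_b \in k$; the hypothesis is that the distinguished arrow $a$ has $c_a = 0$. Put $D := \{b \in Q_1 \mid c_b = 0\}$, so $a \in D$. It suffices to produce a perfect matching $D'' \subseteq D$ with $a \in D''$: then $Q_1 \setminus D'' \supseteq Q_1 \setminus D$, which is the arrow set of $\operatorname{supp}V$, and $\operatorname{supp}V$ contains a cycle through every vertex of $Q$ — otherwise a nonempty proper set of vertices closed under the nonzero arrow actions would span a proper nonzero submodule of $V$, contradicting simplicity — so $Q \setminus D''$ supports a simple module of dimension $1^{Q_0}$, i.e. $D''$ is a simple matching, and it contains $a$.

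First, $D$ meets every unit cycle. By Schur's lemma the central element $\sum_{i \in Q_0}\sigma_i$ acts on $V$ by a scalar $\lambda$, and restricting to $V_i = e_iV$ shows $\sigma_i$ acts on $V_i$ by $\lambda$. Since $a$ bounds a face it is a subpath of some unit cycle $\sigma_j$, which acts by the product of the $c_b$ along it; that product is $0$ because $c_a = 0$, so $\lambda = 0$, and therefore every $\sigma_i$, hence every unit cycle, contains an arrow of $D$.

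Next I use the relations of $A$ to split $D$. If $b$ lies on the two unit cycles $pb$ and $qb$, the relation $p \equiv q$ forces $c(p) = c(q)$ on $V$, where $c(\cdot)$ denotes the product of arrow scalars along a path; hence $p$ contains an arrow of $D$ if and only if $q$ does. Applied to $b \in D$ this says: the two faces at $b$ either both contain at least two arrows of $D$, or both contain exactly one (namely $b$); call a face of the latter kind \emph{tight} and of the former kind \emph{loose}. Writing $D = D_{\mathrm t}\sqcup D_{\mathrm l}$ with $D_{\mathrm t}$ (resp.\ $D_{\mathrm l}$) the $D$-arrows both of whose faces are tight (resp.\ loose), one checks that every $D$-arrow on a tight face lies in $D_{\mathrm t}$ and every $D$-arrow on a loose face lies in $D_{\mathrm l}$; so $D_{\mathrm t}$ is a perfect matching of the tight faces and touches no loose face. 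It therefore remains to choose, inside $D_{\mathrm l}$, a set meeting every loose face exactly once and containing $a$ whenever $a \in D_{\mathrm l}$, and then take $D'' := D_{\mathrm t}\cup(\text{that set})$.

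The loose faces, joined by the arrows $D_{\mathrm l}$, form a bipartite graph — the two colour classes being the two handednesses of unit cycles, with equal parts since $A$ is nondegenerate (so the dual graph has a perfect matching) — in which every vertex has degree at least $2$; I must produce a perfect matching of it, through the edge $a$ when $a \in D_{\mathrm l}$. This is the step I expect to be the real obstacle, because bipartiteness, balance, and minimum degree $2$ do not by themselves force a perfect matching: one has to use that this graph is embedded on the torus as part of the dual of a dimer quiver, presumably via a Hall/König argument in which a subset of loose faces violating Hall's condition is pinned down by the closed curve bounding the region it fills, together with the fact (from the second step) that $D$ meets every unit cycle enclosed there, or via an induction that strips off a minimal loose region. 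Granting this, the first three steps assemble the perfect matching $D'' \subseteq D$ through $a$ and hence the desired simple matching.
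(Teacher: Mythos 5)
Your argument is structurally the same as the paper's, and it stalls at exactly the point you flag. The paper shows $\rho(\sigma_i) = 0$ for all $i$ (without Schur's lemma: it takes a path $p$ from $\operatorname{t}(a)$ to $i$ with $\rho(p)\neq 0$ and uses $\sigma_i p = p\,\sigma_{\operatorname{t}(a)}$ together with $a$ being a subpath of $\sigma_{\operatorname{t}(a)}$, but your Schur argument gives the same conclusion), concludes that $D := \{b \in Q_1 : \rho(b)=0\}$ meets every unit cycle, and then asserts in one line that $D$ is a union of perfect matchings $D_1\cup\cdots\cup D_m$; some $D_\ell$ contains $a$, and the cycle through every vertex supplied by simplicity makes $D_\ell$ simple. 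So the combinatorial claim you isolate as the ``real obstacle'' --- that $D$ contains a perfect matching through $a$ --- is exactly the step the paper states without explicit argument.

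Your tight/loose decomposition is correct and uses the relations in the right way: both faces of a $D$-arrow are simultaneously tight or simultaneously loose, so $D_{\mathrm t}$ is determined and is the obligatory part of any perfect matching inside $D$. But you are right to be suspicious of what remains. A balanced bipartite graph of minimum degree two need not have a perfect matching at all, let alone one through a prescribed edge, and you have also not checked that each connected component of the loose graph is separately balanced, which is what a Hall-type argument actually needs. Closing this has to use the torus embedding of the dual graph (or a more global use of the relation constraint), not merely bipartiteness and degree bounds. Since you neither supply such an argument nor cite one, the proposal has a genuine gap at precisely the step you flag --- though it is only fair to note that the paper's own proof is no more detailed there.
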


\begin{proof}
Let $V_{\rho}$ be a simple $A$-module of dimension $1^{Q_0}$, and suppose $\rho(a) = 0$.
Let $i \in Q_0$.
Since $V_{\rho}$ is simple of dimension $1^{Q_0}$, there is a path $p$ from $\operatorname{t}(a)$ to $i$ such that $\rho(p) \not = 0$.
Furthermore, $\sigma_i p = p \sigma_{\operatorname{t}(a)}$ since $\sum_{j \in Q_0}\sigma_j$ is central.
Thus, since $a$ is a subpath $\sigma_{\operatorname{t}(a)}$ (modulo $I$), we have
$$\rho(\sigma_i) \rho(p) = \rho(\sigma_i p ) = \rho(p \sigma_{\operatorname{t}(a)}) = 0.$$
Whence
$$\rho(\sigma_i) = 0.$$
Thus each unit cycle contains at least one arrow that annihilates $V_{\rho}$.
Therefore there are perfect matchings $x_1, \ldots, x_m \in \mathcal{P}$ such that $V_{\rho}$ has support $Q \setminus (x_1 \cup \cdots \cup x_m)$.
Moreover, since $\rho(a) = 0$, there is some $1 \leq \ell \leq m$ such that $x_{\ell}$ contains $a$.

Since $V_{\rho}$ is simple of dimension $1^{Q_0}$, the subquiver $Q \setminus (x_1 \cup \cdots \cup x_m)$ contains a path $r$ that passes through each vertex of $Q$.
But since $r$ is a path in $Q \setminus (x_1 \cup \cdots \cup x_m)$, $r$ is also a path in $Q \setminus x_{\ell}$.
Therefore $x_{\ell}$ is a simple matching containing $a$.
\end{proof}

\begin{Lemma} \label{u = v}
Suppose $A$ is cancellative.
If $p \in \mathcal{C}^u$, $q \in \mathcal{C}^v$, and $\bar{\tau}(p) = \bar{\tau}(q)$, then $u = v$.
\end{Lemma}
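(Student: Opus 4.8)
The plan is to pass from $p$ and $q$ to cycles in $\hat{\mathcal C}$ based at a common vertex of $Q$, where $\bar\tau$ becomes injective, and then recover the homology class from the resulting identity in $A$. Throughout I would write $\sigma := \prod_{D \in \mathcal S} x_D$ and $\overbar{s} := \bar\tau(s)$ for a path $s$; since $A$ is cancellative one may take $\psi$ to be the identity, so this agrees with the conventions already fixed.

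First I would \emph{reduce to $\hat{\mathcal C}$}. Suppose $p \notin \hat{\mathcal C}$. Then $\sigma \mid \overbar p$ by Proposition~\ref{Bastuff}.4, and by definition of $\hat{\mathcal C}$ the lift of some cyclic permutation of some representative of $p$ contains a cyclic subpath, i.e.\ a proper subpath that is a nontrivial cycle $c$; since this subpath closes up in $Q^+$ we have $c \in \mathcal C^0$, so $\overbar c = \sigma^{\ell}$ for some $\ell \geq 0$ by Proposition~\ref{Bastuff}.3. Deleting $c$ yields a strictly shorter cycle still lying in $\mathcal C^u$ (deleting a cyclic subpath in $\mathcal C^0$ does not change the homology class), whose $\bar\tau$-image is $\overbar p \, \sigma^{-\ell}$ --- here one uses that $\bar\tau$ is multiplicative along composable paths and that the target ring is commutative (so $\bar\tau$ is unchanged under cyclic permutation). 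Iterating, and noting the length strictly decreases, we reach $\hat p \in \hat{\mathcal C}^u$ with $\overbar p = \sigma^{m}\,\overbar{\hat p}$ for some $m \geq 0$ and $\sigma \nmid \overbar{\hat p}$ (again by Proposition~\ref{Bastuff}.4). Applying the same to $q$ gives $\hat q \in \hat{\mathcal C}^v$ with $\overbar q = \sigma^{n}\,\overbar{\hat q}$ and $\sigma \nmid \overbar{\hat q}$. Then $\sigma^{m}\overbar{\hat p} = \overbar p = \overbar q = \sigma^{n}\overbar{\hat q}$ in the polynomial ring $k[x_D \mid D \in \mathcal S]$; cancelling the smaller power of $\sigma$ and using that neither $\overbar{\hat p}$ nor $\overbar{\hat q}$ is divisible by $\sigma$ forces $m = n$, hence $\overbar{\hat p} = \overbar{\hat q}$.

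Next I would \emph{move to a common vertex}. Fix any $i_0 \in Q_0$. Since $A$ is cancellative, Proposition~\ref{Bastuff}.6 provides cycles $a \in \hat{\mathcal C}^u_{i_0}$ and $b \in \hat{\mathcal C}^v_{i_0}$, and Proposition~\ref{Bastuff}.5 (applied within $\hat{\mathcal C}^u$ and within $\hat{\mathcal C}^v$) gives $\overbar a = \overbar{\hat p}$ and $\overbar b = \overbar{\hat q}$, so $\overbar a = \overbar b$. Both $a$ and $b$ lie in $e_{i_0}Ae_{i_0}$, on which $\bar\tau$ restricts to an isomorphism onto $\bar\tau(e_{i_0}Ae_{i_0})$ by Proposition~\ref{Bastuff}.7; hence $a = b$ in $A$. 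Finally $a$ and $b$ have the same tail $\operatorname{t}(a^+) = \operatorname{t}(b^+)$ in $Q^+$, and equal elements of $A$ have equal lifts --- the sets $\mathcal C^w$, $w \in \mathbb Z^2$, partition $\mathcal C$, since each relation $p'-q'$ generating $I$ is a difference of paths that are homotopic rel endpoints (as $p'$ and $q'$ are each complementary in a unit cycle to a single arrow, and unit cycles bound disks in $T^2$). Therefore $\operatorname{h}(a^+) = \operatorname{h}(b^+)$, which is exactly $u = v$.

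The main obstacle is the first step: one must check that deleting cyclic subpaths terminates and lands in $\hat{\mathcal C}$, and --- more to the point --- that the powers of $\sigma$ stripped off from $\overbar p$ and from $\overbar q$ agree, which is where Proposition~\ref{Bastuff}.4 (the characterization of $\hat{\mathcal C}$ by non-divisibility by $\sigma$) does the essential work. The remaining steps are formal consequences of the cited results, modulo the standard fact that the homology class of a cycle in $A$ is well defined.
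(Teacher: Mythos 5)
Your proof is correct, but it takes a genuinely different route from the paper's. The paper argues by contradiction: assuming $u \neq v$, it arranges (after cyclic permutation) for $p^+$ and $q^+$ to share a tail, chooses a connecting cycle $r \in \hat{\mathcal{C}}_i$ from $\operatorname{h}(p^+)$ to $\operatorname{h}(q^+)$ (Proposition~\ref{Bastuff}.6), shows via Propositions~\ref{Bastuff}.1 and \ref{Bastuff}.4 that $\overbar{r} = \sigma^m$ and hence $\overbar{r} = 1$, and then uses the fact that every arrow lies in a simple matching to force $r$ trivial, a contradiction. Your proof is direct: you reduce $p,q$ to representatives in $\hat{\mathcal{C}}^u$, $\hat{\mathcal{C}}^v$, transport them to a common vertex $i_0$ via Propositions~\ref{Bastuff}.5 and \ref{Bastuff}.6, then invoke the injectivity of $\bar{\tau}$ on $e_{i_0}Ae_{i_0}$ (Proposition~\ref{Bastuff}.7) to upgrade the equality $\overbar{a} = \overbar{b}$ to an equality $a = b$ in $A$, and finish by observing that homology class is well defined on $A$. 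Two remarks on the trade-off. Your argument sidesteps the topological claim in the paper that lifts of cycles in distinct homology classes must intersect, and instead leans on Proposition~\ref{Bastuff}.7, a substantially deeper algebraic input (the isomorphism $e_iAe_i \cong Z \cong \bar{\tau}(e_iAe_i)$); the paper's route gets by with only Propositions~\ref{Bastuff}.1, \ref{Bastuff}.4, \ref{Bastuff}.6 and the simple-matching fact. Also, your first reduction step is more elaborate than necessary: rather than iteratively deleting cyclic subpaths, one can simply pick $a \in \hat{\mathcal{C}}^u_{\operatorname{t}(p)}$ (Proposition~\ref{Bastuff}.6), apply Proposition~\ref{Bastuff}.1 to $a$ and $p$ to get $\overbar{a} = \overbar{p}\,\sigma^{\ell}$ for some $\ell$, and use $\sigma \nmid \overbar{a}$ (Proposition~\ref{Bastuff}.4) together with unique monomial factorization to pin down $\overbar{a}$; this replaces your termination argument with one citation. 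Finally, you are implicitly reading Proposition~\ref{Bastuff}.7 as asserting that the isomorphism $e_iAe_i \cong \bar{\tau}(e_iAe_i)$ is realized by $\bar{\tau}$ itself --- this is the intended reading (and follows from $\tau$ being an impression when $A$ is cancellative), but it is worth stating explicitly since the bare $\cong$ does not name the map.
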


\begin{proof}
For a path $s$, set $\overbar{s} := \bar{\tau}(s)$.
Suppose the hypotheses hold, and assume to the contrary that $u \not = v$. 

(i) First suppose $u = mv$ for some $m \in \mathbb{Z}_{\geq 1}$.
Let $s^+$ and $t^+$ be paths in $Q^+$ such that
$$\operatorname{t}(s^+) = \operatorname{t}(p^+), \ \ \ \operatorname{h}(s^+) = \operatorname{t}(q^+), \ \ \ \ \operatorname{t}(t^+) = \operatorname{h}(q^+), \ \ \ \operatorname{h}(t^+) = \operatorname{h}(p^+).$$
Then $ts = \pi(t^+)\pi(s^+)$ is a cycle in $Q$.
Furthermore, there is some $\ell \in \mathbb{Z}$ for which
$$\overbar{t} \overbar{p} \overbar{s} \stackrel{\textsc{(i)}}{=} \overbar{t} \overbar{q} \overbar{s} = \overbar{tqs} \stackrel{\textsc{(ii)}}{=} \overbar{p} \sigma^{\ell},$$
where (\textsc{i}) holds by assumption, and (\textsc{ii}) holds by Proposition \ref{Bastuff}.1.
Whence,
\begin{equation} \label{kza}
\overbar{ts} = \overbar{t} \overbar{s} = \sigma^{\ell}.
\end{equation}
Thus, since $A$ is cancellative, $(ts)^+$ is a cycle in $Q^+$, by Proposition \ref{Bastuff}.6.
Therefore $m = 1$.

(ii) Now suppose $u = mv$ for some $m \in \mathbb{Z}_{\leq 0}$.
Let $s^+$ and $t^+$ be paths in $Q^+$ such that
$$\operatorname{t}(s^+) = \operatorname{h}(p^+), \ \ \ \operatorname{h}(s^+) = \operatorname{t}(q^+), \ \ \ \ \operatorname{t}(t^+) = \operatorname{h}(q^+), \ \ \ \operatorname{h}(t^+) = \operatorname{t}(p^+).$$
As in case (i), $ts$ is a cycle in $Q$ satisfying
\begin{equation} \label{ild}
\overbar{t} \overbar{s} = \overbar{ts} = \sigma^{\ell}
\end{equation}
for some $\ell \geq 0$.
Thus, there is some $n \in \mathbb{Z}$ such that
$$\overbar{p}^2 \sigma^{\ell} \stackrel{\textsc{(i)}}{=} \overbar{p} \overbar{q} \sigma^{\ell}
\stackrel{\textsc{(ii)}}{=} \overbar{p} \overbar{t} \overbar{q} \overbar{s} = \overbar{ptqs} \stackrel{\textsc{(iii)}}{=} \sigma^n,$$
where (\textsc{i}) holds by assumption; (\textsc{ii}) holds by (\ref{ild}); and (\textsc{iii}) holds by Proposition \ref{Bastuff}.1.
Whence, $\overbar{p}^2 = \sigma^{n-\ell}$.
Consequently, $(p^2)^+$, hence $p^+$, is a cycle in $Q^+$, by Proposition \ref{Bastuff}.6.
Similarly, $q^+$ is a cycle in $Q^+$.
Therefore $u = (0,0) = v$.

(iii) Finally, suppose $u$ is not a multiple of $v$. 
Then, since any two non-parallel lines in $\mathbb{R}^2$ intersect, there are lifts of $p$ and $q$ to $Q^+ \subset \mathbb{R}^2$ that intersect at a vertex $i^+ \in Q_0^+$.  
By cyclically permuting $p$ and $q$, we may assume that $\operatorname{t}(p) = \operatorname{t}(q) = i$.

Let $r^+$ be a path from $\operatorname{h}(p^+)$ to $\operatorname{h}(q^+)$; then $r$ is a cycle at $i$.
Since $A$ is cancellative, we may choose $r$ to be in $\hat{\mathcal{C}}_i$, by Proposition \ref{Bastuff}.5.
Whence
\begin{equation} \label{super zephy}
\sigma \nmid \overbar{r},
\end{equation}
by Proposition \ref{Bastuff}.3.
Moreover, there is an $m \in \mathbb{Z}$ such that
$$\overbar{r} \overbar{p} = \overbar{rp} \stackrel{\textsc{(i)}}{=} \overbar{q} \sigma^m \stackrel{\textsc{(ii)}}{=} \overbar{p} \sigma^m,$$
where (\textsc{i}) holds by Proposition \ref{Bastuff}.1, and (\textsc{ii}) holds by assumption.
Thus $\overbar{r} = \sigma^m$.
Hence $r^+$ is a cycle in $Q^+$, by Proposition \ref{Bastuff}.6.
Therefore $\operatorname{h}(p^+) = \operatorname{h}(q^+)$.
But then $u = v$, contrary to assumption. 
\end{proof}

\begin{Lemma} \label{R = S iff}
Let $\psi: A \to A'$ be a cyclic contraction.
Then $R = S$ if and only if $\hat{\mathcal{C}}_i^u \not = \emptyset$ for each $u \in \mathbb{Z}^2$ and $i \in Q_0$.
\end{Lemma}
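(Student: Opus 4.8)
The plan is to reduce the statement to one combinatorial fact about the contraction $\psi$ and then run everything through Proposition~\ref{Bastuff} applied to the cancellative algebra $A'$. Note first that $R\subseteq S$ always, since $\cap_i\bar\tau\psi(e_iAe_i)\subseteq\bar\tau\psi(e_jAe_j)\subseteq\cup_i\bar\tau\psi(e_iAe_i)$. Write $N_i$ for the monoid of monomials $\{\bar\tau\psi(p)\mid p\text{ a cycle of }A\text{ at }i\}$, so that $R=k[\cap_iN_i]$ and $S=k[\cup_iN_i]$. Since a single monomial lying in the $k$-span of a monoid of monomials already lies in that monoid, $R=S$ is equivalent to $\cap_iN_i=\cup_iN_i$, hence to $N_i=N_j$ for all $i,j$, hence to: \emph{for every cycle $p$ of $A$ (at any vertex) and every vertex $j$ there is a cycle $q$ of $A$ at $j$ with $\bar\tau\psi(q)=\bar\tau\psi(p)$.}

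The key input is a dictionary relating $\hat{\mathcal{C}}$ for $A$ and for $A'$. For a cycle $p$ of $A$, the contracted cycle $\psi(p)$ of $A'$ lies in the same homology class — the contraction $Q^+\to Q'^+$ of covering quivers is $\mathbb{Z}^2$-equivariant, so it preserves $\operatorname{h}(p^+)-\operatorname{t}(p^+)$ — and $p\in\hat{\mathcal{C}}$ if and only if $\psi(p)\in\hat{\mathcal{C}}'$, since collapsing the arrows of $Q_1^*$, which form an acyclic set, can neither create nor destroy a cyclic subpath of a lift. Feeding the last equivalence into Proposition~\ref{Bastuff}.4 for the cancellative algebra $A'$ yields: $p\in\hat{\mathcal{C}}$ if and only if $\sigma\nmid\bar\tau\psi(p)$. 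I expect the careful proof of this dictionary — the acyclicity of $Q_1^*$, and the handling of representatives and cyclic permutations under $\psi$ — to be the real obstacle; granting it, the two implications are formal.

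$(\Leftarrow)$ Assume $\hat{\mathcal{C}}^u_i\neq\emptyset$ for all $u\in\mathbb{Z}^2$ and $i\in Q_0$. Let $p\in\mathcal{C}^u_i$ and let $j\in Q_0$; choose $\hat q\in\hat{\mathcal{C}}^u_i$ and $\hat r\in\hat{\mathcal{C}}^u_j$. By the dictionary $\psi(\hat q),\psi(\hat r)\in\hat{\mathcal{C}}'^{u}$, so $\bar\tau\psi(\hat q)=\bar\tau\psi(\hat r)$ by Proposition~\ref{Bastuff}.5. Moreover $\psi(p)$ and $\psi(\hat q)$ are cycles of $A'$ of class $u$ based at the image of $i$, so Proposition~\ref{Bastuff}.1 gives $m\in\mathbb{Z}$ with $\bar\tau\psi(p)=\bar\tau\psi(\hat q)\,\sigma^{m}$; as the left-hand side is an honest monomial and $\sigma\nmid\bar\tau\psi(\hat q)$, we must have $m\geq0$. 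Then $q:=\hat r\,\sigma_j^{m}$ is a cycle of $A$ at $j$ with $\bar\tau\psi(q)=\bar\tau\psi(\hat r)\,\sigma^{m}=\bar\tau\psi(p)$, using $\bar\tau\psi(\sigma_j)=\sigma$; so by the reduction $R=S$.

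$(\Rightarrow)$ Assume $R=S$ and fix $u\in\mathbb{Z}^2$, $i\in Q_0$. Since $A'$ is cancellative, $\hat{\mathcal{C}}'^{u}\neq\emptyset$ by Proposition~\ref{Bastuff}.6; choose $q'\in\hat{\mathcal{C}}'^{u}$, so $\sigma\nmid\bar\tau(q')$ by Proposition~\ref{Bastuff}.4. Then $\bar\tau(q')\in S=R=k[\cap_kN_k]$, and being a monomial it lies in $\cap_kN_k$; in particular $\bar\tau(q')=\bar\tau\psi(p)$ for some cycle $p$ of $A$ at $i$. Since $\sigma\nmid\bar\tau\psi(p)$, the dictionary gives $p\in\hat{\mathcal{C}}$. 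Finally $\psi(p)$ and $q'$ are cycles of the cancellative algebra $A'$ with $\bar\tau(\psi(p))=\bar\tau(q')$, so they have the same homology class by Lemma~\ref{u = v}; thus $p$ has class $u$ and $p\in\hat{\mathcal{C}}^u_i$, which is therefore nonempty.
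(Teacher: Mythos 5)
Your proof is correct in outline and close to the paper's, but reorganized in a way that is worth noting. The reduction to ``\,$N_i=N_j$ for all $i,j$\,'' via monoids of monomials is a clean reformulation that the paper does not spell out; it pays off in the $(\Leftarrow)$ direction, where you transfer an \emph{arbitrary} cycle $p$ at $i$ to a cycle $q=\hat r\,\sigma_j^m$ at $j$ directly from Proposition~\ref{Bastuff}.1 and \ref{Bastuff}.5, bypassing the appeal to Proposition~\ref{Bastuff}.8 (the finite-generation structure of $S$) that the paper invokes to restrict to monomials coprime to $\sigma$. In the $(\Rightarrow)$ direction you and the paper run the same chain: pick $q'\in\hat{\mathcal C}'^u$ by \ref{Bastuff}.6, pull $\bar\tau(q')$ back to a cycle $p\in\mathcal C_i$ via $R=S$, conclude $p\in\hat{\mathcal C}$, and then pin down the homology class of $p$ using Lemma~\ref{u = v} together with the fact that $\psi$ cannot contract a cycle in $\overbar Q$ to a vertex (the paper cites this explicitly as \cite[Lemma 3.6]{B2}; your appeal to $Q_1^*$ being acyclic and ``$\mathbb Z^2$-equivariance'' is the same fact, just phrased informally). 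The one substantive point you correctly flag as the real work, and which the paper glosses over, is the ``dictionary'' step: the statement ``$p\in\hat{\mathcal C}\Leftrightarrow\sigma\nmid\bar\tau_\psi(p)$'' is \emph{not} a literal instance of Proposition~\ref{Bastuff}.4, which concerns $\bar\tau$ for $A$ with the simple matchings of $A$, not $\bar\tau_\psi=\bar\tau\circ\psi$ with the simple matchings of $A'$; moreover in the $(\Rightarrow)$ direction the secondary hypothesis of \ref{Bastuff}.3--5 (existence of some $u$ with $\hat{\mathcal C}_i^u\neq\emptyset$ for all $i$) is precisely what is being established, so it cannot be assumed. What actually rescues the $(\Rightarrow)$ step, and what your dictionary makes explicit, is that the implication you actually need --- $\sigma\nmid\bar\tau_\psi(p)\Rightarrow p\in\hat{\mathcal C}$ --- is the ``cheap'' direction: a cyclic subpath of a representative of $p$ pushes forward under $\psi$ (since no cycle is contracted) to a nontrivial cycle in $\mathcal C'^0$, whose $\bar\tau$-image is a positive power of $\sigma$ by \ref{Bastuff}.3 applied to the cancellative $A'$, forcing $\sigma\mid\bar\tau_\psi(p)$. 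Making that remark precise is exactly the obstacle you identify; with it in place your argument is complete and, in the $(\Leftarrow)$ direction, somewhat more self-contained than the paper's.
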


\begin{proof}
($\Rightarrow$) Suppose $R = S$.
Fix $u \in \mathbb{Z}^2 \setminus 0$ and $i \in Q_0$; we claim that $\hat{\mathcal{C}}^u_i \not = \emptyset$.

Since $A'$ is cancellative, there is a cycle $q \in \hat{\mathcal{C}}'^u$, by Proposition \ref{Bastuff}.5.
In particular, $\sigma \nmid \overbar{q}$ by Proposition \ref{Bastuff}.3.
Since $R = S$, there is a cycle $p \in \mathcal{C}_i$ such that $\overbar{p} = \overbar{q}$.
Furthermore, since $\sigma \nmid \overbar{p}$, $p$ is in $\hat{\mathcal{C}}_i$, by Proposition \ref{Bastuff}.3.
Thus to show that $\hat{\mathcal{C}}^u_i \not = \emptyset$, it suffices to show that $p$ is also in $\mathcal{C}^u$.

Now $\psi(p)$ is in $\mathcal{C}'^u$ since $q$ is in $\mathcal{C}'^u$ and $\overbar{\psi(p)} = \overbar{q}$, by Lemma \ref{u = v}.
Furthermore, $\psi$ cannot contract a cycle in the underlying graph of $Q$ to a vertex \cite[Lemma 3.6]{B2}.
Therefore $p$ is in $\mathcal{C}^u$, proving our claim.

($\Leftarrow$) Conversely, suppose $\hat{\mathcal{C}}_i^u \not = \emptyset$ for each $u \in \mathbb{Z}^2$ and $i \in Q_0$.
To show that $S \subseteq R$, it suffices to show that each monomial in $S$ not divisible by $\sigma$ is in $R$, by Proposition \ref{Bastuff}.8.
So let $g \in S$ be such that $\sigma \nmid g$.
Since $\sigma \nmid g$, there is a cycle $p \in \hat{\mathcal{C}}$ for which $\overbar{p} = g$, by Proposition \ref{Bastuff}.3.
Let $u \in \mathbb{Z}^2$ be such that $p \in \hat{\mathcal{C}}^u$.
If $q$ is another cycle in $\hat{\mathcal{C}}^u$, then $\overbar{q} = \overbar{p}$, by Proposition \ref{Bastuff}.4.
Therefore $g = \overbar{p}$ is in $R$ since $\hat{\mathcal{C}}_i^u \not = \emptyset$ for each $i \in Q_0$.
\end{proof}

\begin{Theorem} \label{cool!}
Let $A = kQ/I$ be a nondegenerate dimer algebra.
The following are equivalent:
\begin{enumerate}
 \item $A$ is cancellative.
 \item Each arrow annihilates a simple $A$-module of dimension $1^{Q_0}$.
 \item Each arrow is contained in a simple matching, $Q_1^{\mathcal{S}} = \emptyset$.
 \item The center $R$ of $\Lambda$ equals the cycle algebra $S$.
 \item If $\psi: A \to A'$ is a cyclic contraction, then $\psi$ is trivial, $Q_1^* = \emptyset$.
\end{enumerate}
\end{Theorem}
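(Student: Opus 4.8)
The plan is to prove the cyclic chain of implications $(1)\Rightarrow(3)\Rightarrow(5)\Rightarrow(1)$, together with $(2)\Leftrightarrow(3)$ and $(1)\Leftrightarrow(4)$; granting these, all five statements are equivalent. Almost all of the needed implications are short applications of results already in hand. For $(1)\Rightarrow(3)$ I invoke \cite[Theorem 4.24]{B2} (as is done in the proof of Lemma \ref{u = v}): a cancellative dimer algebra has every arrow in a simple matching, i.e.\ $Q_1^{\mathcal S}=\emptyset$. For $(3)\Rightarrow(5)$, Theorem \ref{Q* in Qdagger} gives $Q_1^*\subseteq Q_1^{\mathcal S}$, so $Q_1^{\mathcal S}=\emptyset$ forces $Q_1^*=\emptyset$. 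For $(5)\Rightarrow(1)$: if $\psi$ is trivial then $Q=Q'$ and $A=A'$, which is cancellative by the definition of a cyclic contraction. The equivalence $(2)\Leftrightarrow(3)$ is immediate: $(2)\Rightarrow(3)$ is Lemma \ref{simple matching lemma}, while if each arrow $a$ lies in a simple matching $D_a$, then the simple $A$-module of dimension $1^{Q_0}$ supported on $Q\setminus D_a$ is annihilated by $a$, giving $(3)\Rightarrow(2)$. Finally $(1)\Rightarrow(4)$: if $A$ is cancellative then $\hat{\mathcal C}^u_i\neq\emptyset$ for all $u\in\mathbb Z^2$ and $i\in Q_0$ by Proposition \ref{Bastuff}.6, whence $R=S$ by Lemma \ref{R = S iff}.

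That leaves $(4)\Rightarrow(1)$, the one substantial step. Given a cyclic contraction $\psi\colon A\to A'$ with $R=S$, Lemma \ref{R = S iff} turns this into the combinatorial statement $\hat{\mathcal C}^u_i\neq\emptyset$ for all $u\in\mathbb Z^2$, $i\in Q_0$ — in particular the second bullet of Proposition \ref{Bastuff} applies, so its items (3)--(5) are available; and, since $R=S$ forces every $\bar\tau\psi(e_iAe_i)$ to generate the single algebra $S=R$, the monomial $\overbar p:=\bar\tau\psi(p)$ lies in $R$ for every cycle $p$. Arguing by contradiction, assume $A$ is non-cancellative and fix a non-cancellative pair; as in the proof of Corollary \ref{yooohooo}, compose with connecting paths and a suitable power of a unit cycle to reduce to a non-cancellative pair of distinct cycles $p,q$ at a common vertex $j$ with $\operatorname{t}(p^+)=\operatorname{t}(q^+)$ and $\operatorname{h}(p^+)=\operatorname{h}(q^+)$, so that $p,q\in\mathcal C^u_j$ for a single $u$ and, by Proposition \ref{Bastuff}.1--2, $\bar\eta(p)=\bar\eta(q)$ and $\overbar p=\overbar q$. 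Since $A'$ is cancellative, $\bar\tau$ is injective on $e_{\psi(j)}A'e_{\psi(j)}$ (Proposition \ref{Bastuff}.7), so $\bar\tau(\psi(p))=\overbar p=\overbar q=\bar\tau(\psi(q))$ gives $\psi(p)=\psi(q)$, i.e.\ $p-q\in\ker\psi$; and since $\overbar p,\overbar q\in R$, Lemma \ref{Bbstuff}.3 yields an $n$ with $p^n,q^n\in Ze_j$, whence Lemma \ref{Bbstuff}.1 gives $p^{2n}=p^nq^n=q^{2n}$. The goal is then to turn this into a contradiction, as at the end of the proof of Theorem \ref{Q* in Qdagger}.

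I expect $(4)\Rightarrow(1)$ to be the main obstacle, and specifically the final step of extracting a contradiction from $p-q\in\ker\psi$ together with $p^{2n}=q^{2n}$: unlike in Theorem \ref{Q* in Qdagger}, the two cycles here satisfy $\overbar p=\overbar q$, so they cannot be separated by a simple matching in the obvious way, and one must instead exploit $\ker\psi\cap Z=\operatorname{nil}Z$ (Lemma \ref{Bbstuff}.4) — perhaps after first reducing $p,q$ into $\hat{\mathcal C}^u_j$ by factoring out unit cycles using Proposition \ref{Bastuff}.3--5 — to conclude $p=q$. Once $(4)\Rightarrow(1)$ is settled, the remaining implications are essentially bookkeeping around Corollary \ref{yooohooo}, Theorem \ref{Q* in Qdagger}, Lemma \ref{simple matching lemma}, Proposition \ref{Bastuff}.6, and Lemma \ref{R = S iff}.
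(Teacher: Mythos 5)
Your implication structure---the cycle $(1)\Rightarrow(3)\Rightarrow(5)\Rightarrow(1)$ together with $(2)\Leftrightarrow(3)$ and $(1)\Leftrightarrow(4)$---differs slightly from the paper's (the paper closes the loop through $(4)\Rightarrow(3)\Rightarrow(1)$ via Proposition~\ref{min length}, and then separately shows $(1)\Rightarrow(5)\Rightarrow(4)$), but your routing through $(3)\Rightarrow(5)\Rightarrow(1)$ via Theorem~\ref{Q* in Qdagger} and the definition of cyclic contraction is valid and arguably cleaner for those arrows, and everything else lines up with the paper's citations. The only genuine gap is the one you yourself flag: $(4)\Rightarrow(1)$.

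Here the fix is much lighter than the direct argument you sketch. You already invoke Lemma~\ref{R = S iff} to convert $R=S$ into $\hat{\mathcal C}^u_i\neq\emptyset$ for all $u\in\mathbb Z^2$, $i\in Q_0$. What you are missing is that \cite[Theorem 4.24]{B2}---the very result you cite for $(1)\Rightarrow(3)$---does not require cancellativity as a hypothesis; it applies under the weaker combinatorial hypothesis $\hat{\mathcal C}^u_i\neq\emptyset$ (exactly as in the second bullet of Proposition~\ref{Bastuff}, where the hypotheses ``$A$ cancellative'' and ``$\hat{\mathcal C}^u_i\neq\emptyset$'' are treated as interchangeable). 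So the paper's step is simply: $R=S$ gives $\hat{\mathcal C}^u_i\neq\emptyset$ by Lemma~\ref{R = S iff}, hence $Q_1^{\mathcal S}=\emptyset$ by \cite[Theorem 4.24]{B2}, which is $(3)$; your own $(3)\Rightarrow(5)\Rightarrow(1)$ then closes the loop. Once you realize the theorem applies in this generality, $(4)\Rightarrow(1)$ becomes a two-line citation rather than the hard ad hoc contradiction argument you attempt.

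As an aside, that attempted direct argument has issues beyond the final step you identify. The reduction ``compose with connecting paths and a suitable power of a unit cycle to reduce to a non-cancellative pair of distinct \emph{cycles} $p,q$ at a common vertex'' is not justified: a non-cancellative pair $p,q$ shares tail $i$ and head $j$, and if $i\neq j$ there is no obvious way to append a return path $s$ from $j$ to $i$ so that $sp$, $sq$ remain distinct \emph{and} still admit a left-annihilating path (the original $r$ has $\operatorname{t}(r)=j$, which is not composable with $\operatorname{h}(s)=i$). The paper's Corollary~\ref{yooohooo} works with $\sigma^m_{\operatorname{h}(p)}p=\sigma^m_{\operatorname{h}(p)}q$ precisely to avoid this, and never needs $p,q$ to be cycles. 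Moreover, even granting $p-q\in\ker\psi$ and $p^{2n}=q^{2n}$, the element $p^n-q^n$ lies in $Ze_j$, not in $Z$, so Lemma~\ref{Bbstuff}.4 on $\operatorname{nil}Z=\ker\psi\cap Z$ does not apply directly. These difficulties are exactly why the paper routes through the combinatorial condition~$(3)$ instead.
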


\begin{proof}
(2) $\Leftrightarrow$ (3): Lemma \ref{simple matching lemma}.

(3) $\Rightarrow$ (1): Proposition \ref{min length}.

(4) $\Rightarrow$ (3): Suppose $R = S$ (recall that $S$ is independent of the choice of cyclic contraction).
Then $\hat{\mathcal{C}}^u_i \not = \emptyset$ for each $i \in Q_0$ and $u \in \mathbb{Z}^2$, by Lemma \ref{R = S iff}.
Therefore $Q_1^{\mathcal{S}} = \emptyset$ by \cite[Theorem 4.25]{B2}.

(1) $\Rightarrow$ (4): If $A$ is cancellative, then $R = S$ by Proposition \ref{Bastuff}.7.

(1) $\Rightarrow$ (5): Suppose $A$ is cancellative, and $\psi: A \to A'$ is a cyclic contraction.
Then
$$Q_1^* \stackrel{\textsc{(i)}}{\subseteq} Q_1^{\mathcal{S}} \stackrel{\textsc{(ii)}}{=} \emptyset,$$
where (\textsc{i)} holds by Theorem \ref{Q* in Qdagger}, and (\textsc{ii)} holds by the implication (1) $\Rightarrow$ (2).
Therefore $Q_1^* = \emptyset$.

(5) $\Rightarrow$ (4): Clear.
\end{proof}

We will use the following definition in the proof of Proposition \ref{homotopy not noetherian}.

\begin{Definition} \label{impression definition} \rm{\cite[Definition 2.6]{B2}
Let $A$ be a finitely generated $k$-algebra and let $Z$ be its center.
An \textit{impression} of $A$ is an algebra monomorphism $\tau: A \hookrightarrow M_d(B)$ to a matrix ring over a commutative finitely generated $k$-algebra $B$, such that
\begin{itemize}
 \item for generic $\mathfrak{b} \in \operatorname{Max}B$, the composition
\begin{equation*} \label{composition}
A \stackrel{\tau}{\longrightarrow} M_d(B) \stackrel{1}{\longrightarrow} M_d\left(B/\mathfrak{b} \right) \cong M_d(k)
\end{equation*}
is surjective; and
 \item the morphism $\operatorname{Max}B \rightarrow \operatorname{Max}\tau(Z)$, $\mathfrak{b} \mapsto \mathfrak{b} \cap \tau(Z)$, is surjective.
\end{itemize}
} \end{Definition}

\begin{Proposition} \label{homotopy not noetherian}
Suppose $A$ is non-cancellative.
The ghor algebra $\Lambda$ is nonnoetherian and an infinitely generated module over its center $R$.
\end{Proposition}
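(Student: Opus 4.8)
The plan is to locate the non-noetherian behaviour in a single vertex corner ring $e_i\Lambda e_i$, using two elementary reductions together with the structural description of $\Lambda$ from \cite{B2}. On the one hand, a corner ring of a left noetherian ring is left noetherian, so to prove $\Lambda$ non-noetherian it suffices to find one vertex $i$ with $e_i\Lambda e_i$ non-noetherian (and $e_i\Lambda e_i$ is commutative, so ``left'' is immaterial here). On the other hand, $e_i\Lambda e_i$ is an $R$-module direct summand of $\Lambda=\bigoplus_{j,k}e_j\Lambda e_k$ since $R=Z(\Lambda)$ preserves each summand; so to prove $\Lambda$ is not a finitely generated $R$-module it suffices to find one vertex $i$ with $e_i\Lambda e_i$ not finitely generated over $Re_i\cong R$. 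Finally I use that $e_i\Lambda e_i\cong\bar{\tau}_{\psi}(e_iAe_i)$ is a monomial subalgebra $k[M_i]$ of $k[x_D\mid D\in\mathcal{S}']$ with exponent monoid $M_i$, that $R\cong k[M]$ with $M=\bigcap_i M_i$ (so in particular $\sigma=(1,\dots,1)\in M$), and that $S\cong k[M_S]$ with $M_S=\sum_i M_i$ is a finitely generated $k$-algebra, by Proposition \ref{Bastuff}(7)--(8) applied to the cancellative algebra $A'$.

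Since $A$ is non-cancellative, Theorem \ref{cool!} gives $R\neq S$, hence $M\subsetneq M_S$; because $M+M=M$ this forces $M\subsetneq M_i$ for some vertex, and by Lemma \ref{R = S iff} it also forces some homology class $u\in\mathbb{Z}^2$ for which $\hat{\mathcal{C}}^u$ is empty at some vertex. The heart of the argument is to upgrade this gap to a concrete infinite pattern inside one exponent monoid $M_i$: the aim is to produce a cycle $p\in e_iAe_i$, with image monomial $w:=\bar{\tau}_{\psi}(p)$, such that $w^n\in M_i$ for all $n\ge 1$, no $w^n$ lies in $M$, and there is a fixed bounded exponent $c$ with $\sigma^{c}w^n\in M_i$ for every $n$. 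Such a $p$ places inside $M_i$ a copy of the standard non-finitely-generated monoid $\langle\sigma,\sigma w,\sigma w^2,\dots\rangle$ and shows $M_i$ is not a finite union of $M$-translates; hence $e_i\Lambda e_i=k[M_i]$ is neither noetherian nor a finitely generated $R$-module, and the two reductions above then yield the proposition.

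The tool for extracting such a $p$ is Lemma \ref{Bbstuff}. Part (2) supplies an $N$, independent of $n$, with $p^n\sigma_i^N\in Ze_i$; since a central element $z$ of $A$ satisfies $\bar{\tau}_{\psi}(ze_j)=\bar{\tau}_{\psi}(ze_{j'})$ for all $j,j'$ (it commutes with any path between the two vertices, on which $\bar{\tau}_{\psi}$ is not identically zero because $Q$ is connected), central elements map into $k[\bigcap_j M_j]=R$, so $\sigma^{N}w^n\in R$ with the uniform exponent $N$ — the ``bounded $\sigma$-power'' ingredient. For ``no $w^n$ in $M$'', Lemma \ref{Bbstuff}(3) shows that if $w^n\in R$ then some power of $p^n$ is central, so one needs to know the gap $R\subsetneq S$ is not merely an integral (``saturation'') extension. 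Here the class $u$ with $\hat{\mathcal{C}}^u$ empty at a vertex is decisive: by Proposition \ref{Bastuff}(4) the cheapest cycle there in class $u$ carries a factor of $\sigma$, and tracking the classes $u,2u,3u,\dots$ through the cyclic contraction $\psi$ (using Lemma \ref{u = v} and that $\psi$ preserves homology) together with the uniform bound of Lemma \ref{Bbstuff}(2) shows the $\sigma$-exponent of the cheapest class-$nu$ cycle stays bounded while its non-$\sigma$ part grows without bound, which is incompatible with every large power of $w$ lying in $R$.

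I expect this last point to be the main obstacle: passing from the coarse inequality $R\neq S$ to a single vertex $i$ and a single cycle $p$ there whose image escapes the center in the strong sense that \emph{no} power of $\bar{\tau}_{\psi}(p)$ lies in $R$, rather than merely $\bar{\tau}_{\psi}(p)\notin R$. This forces one to combine the homology bookkeeping through $\psi$ with the uniformity in $n$ of Lemma \ref{Bbstuff}(2) and to pin down which vertex carries the bad pattern. The remaining pieces — the two module-theoretic reductions, the non-finite-generation of $\langle\sigma,\sigma w,\sigma w^2,\dots\rangle$, and the structural identifications of $e_i\Lambda e_i$, $R$ and $S$ cited from \cite{B2} — are routine.
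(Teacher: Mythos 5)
Your route is genuinely different from the paper's. The paper proves this proposition in three lines by citing that $\tau_{\psi}:\Lambda\hookrightarrow M_{|Q_0|}(B)$ is an impression (\cite[Theorem~5.9.1]{B2}), a localization criterion from \cite{B6}, and then \cite[Theorem~4.1.2]{B4}; you instead try to replicate, for $\Lambda$, the concrete chain-of-ideals argument that the paper uses later only for $A$, $Z$, $\hat Z$ and $R$ (Theorem~\ref{nonnoetherian theorem}). Your two module-theoretic reductions are fine, and you correctly identify the crux (a single cycle $p$ with \emph{no} power of $\bar{\tau}_{\psi}(p)$ in $R$, which is exactly Proposition~\ref{s^n}). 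But there are two genuine gaps.

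For nonnoetherianity you place the chain at the vertex $i$ carrying $p$. That cannot work: $w=\bar{\tau}_{\psi}(p)\in M_i$ by construction, so $\sigma^{c}w^{m}=w^{m-n}\cdot(\sigma^{c}w^{n})$ with $w^{m-n}\in M_i$, and the chain $(\sigma^{c}w)\subseteq(\sigma^{c}w,\sigma^{c}w^{2})\subseteq\cdots$ stabilizes in $k[M_i]$ at the first step. What is actually needed, and what Proposition~\ref{s^n} provides, is a \emph{fixed} second vertex $j$ with $\overline{p}^{\,n}\notin\bar{\tau}_{\psi}(e_jAe_j)=k[M_j]$ for all $n\ge 1$; then the elements $\sigma^{N}w^{n}$ lie in $R\subseteq M_j$ (Lemma~\ref{Bbstuff}.2 plus Lemma~\ref{great scott}), the would-be cofactor $w^{m-n}$ is outside $M_j$, and the chain in $k[M_j]$ does not stabilize. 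Your condition ``no $w^n$ lies in $M$'' is strictly weaker than ``no $w^n$ lies in $M_j$ for one fixed $j$''; the former only shows $R=k[M]$ is nonnoetherian, not that some corner ring of $\Lambda$ is.

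For infinite generation, exhibiting the non-finitely-generated submonoid $\langle\sigma,\sigma w,\sigma w^2,\dots\rangle$ inside $M_i$ does not show $M_i$ is not a finite union of $M$-translates: a monoid may be module-finite over a submonoid and still contain non-finitely-generated submonoids, and the pigeonhole identity $(n_2-n_1)w=m_2-m_1$ you would extract does not land in $M$, since $M$ is only closed under addition. The paper's route for this half (for $A$) is Artin--Tate via Lemma~\ref{S over R} and Theorem~\ref{non-finitely generated}: $S$ is a finitely generated $k$-algebra while $R$ is not, so $S$ is infinite over $R$, hence $\Lambda$ is infinite over $R$. Importing that step, rather than the monoid-translate count, would close your argument.
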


\begin{proof}
Consider the $k$-linear map
$$\tau_{\psi}:A \to M_{|Q_0|}(k[\mathcal{S}'])$$ 
defined for each $i,j \in Q_0$ and $p \in e_jA e_i$ by
$$p \mapsto \bar{\tau}\psi(p) e_{ji}.$$
It is shown in \cite[Theorem 5.9.1]{B2} that this map induces an impression of $\Lambda$,
\begin{equation} \label{ghmily}
\tau_{\psi}: \Lambda \to M_{|Q_0|}(k[\mathcal{S}']).
\end{equation}
Furthermore, it is shown in \cite[Theorem 3.18]{B6} that on the algebraic variety $\operatorname{Max}S$, the following loci coincide:
 $$U^*_{S/R} : = \left\{ \mathfrak{n} \in \operatorname{Max}S \, | \, R_{\mathfrak{n} \cap R} \text{ is noetherian} \right\} = \left\{ \mathfrak{n} \in \operatorname{Max}S \, | \, R_{\mathfrak{n} \cap R} = S_{\mathfrak{n}} \right\} =: U_{S/R}.$$
But $R \not = S$, by Theorem \ref{cool!}.
Therefore $\Lambda$ is nonnoetherian and an infinitely generated $R$-module by \cite[Theorem 4.1.2]{B4}.\footnote{The assumption that $k$ is uncountable in \cite[Theorem 4.1]{B4} is only used in 4.1.1, and not in 4.1.2.}
\end{proof}

Recall (\ref{zwb}): for $p \in e_jAe_i$, $i,j \in Q_0$, set $\bar{\tau}_{\psi}(p) := \bar{\tau}\psi(p)$.

\begin{Proposition} \label{s^n}
Let $p \in e_iAe_i$ be a cycle.
If $\overbar{p} \not \in \bar{\tau}_{\psi}(e_jAe_j)$ and $\sigma \nmid \overbar{p}$, then for each $n \geq 1$,
$$\overbar{p}^n \not \in \bar{\tau}_{\psi}(e_jAe_j).$$
Consequently, if $\overbar{p} \not \in R$ and $\sigma \nmid \overbar{p}$, then for each $n \geq 1$, $\overbar{p}^n \not \in R$.
Moreover, if $A$ is non-cancellative, then such a cycle exists.
\end{Proposition}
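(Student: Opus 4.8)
The plan is to establish the first assertion by contradiction, and then read off the other two. I keep the notation of the statement ($\overbar{\,\cdot\,}=\bar\tau_\psi$ on $A$ and $=\bar\tau$ on $A'$; $\sigma=\prod_{D\in\mathcal S'}x_D$), and use three preliminary facts freely. First, $\bar\tau_\psi$ restricts to a $k$-algebra homomorphism on each corner ring $e_kAe_k$, so $\bar\tau_\psi(e_kAe_k)$ is a $k$-subalgebra of $S$ spanned by monomials; hence $R=k[\cap_k\bar\tau_\psi(e_kAe_k)]=\cap_k\bar\tau_\psi(e_kAe_k)$. Second, $\sigma=\overbar{\sigma_k}\in\bar\tau_\psi(e_kAe_k)$ for every vertex $k$, since $\psi(\sigma_k)$ is a unit cycle of the cancellative dimer algebra $A'$; in particular $\sigma\in R$. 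Third, Proposition \ref{Bastuff}.3--8 apply to $A'$.

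For the first assertion, suppose to the contrary that $\overbar{p}^{\,n}\in\bar\tau_\psi(e_jAe_j)$ for some $n\ge 2$ (the case $n=1$ being the hypothesis), and fix a cycle $q\in e_jAe_j$ with $\overbar{q}=\overbar{p}^{\,n}$. Since $\sigma\nmid\overbar{p}$ we have $\sigma\nmid\overbar{q}=\overbar{p}^{\,n}$, so $\psi(p),\psi(q)\in\hat{\mathcal C}'$ by Proposition \ref{Bastuff}.4; write $u$ for the homology class of $\psi(p)$. From $\overbar{\psi(p)^n}=\overbar{p}^{\,n}=\overbar{\psi(q)}$ and Lemma \ref{u = v}, the class of $\psi(q)$ is $nu$, so $\psi(q)\in\hat{\mathcal C}'^{\,nu}_{\psi(j)}$. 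Moreover $q\in\hat{\mathcal C}^{\,nu}_j$: if a cyclic permutation of a representative of $q$ had a cyclic subpath $c$ in its lift, then $c$ is a nontrivial cycle of $\overbar Q$, so $\psi(c)$ is nontrivial \cite[Lemma 3.6]{B2} and of zero homology, forcing $\sigma\mid\overbar{c}$ (a nontrivial zero-homology cycle of the cancellative $A'$ cannot lie in $\hat{\mathcal C}'$, by Proposition \ref{Bastuff}.3, .4 and .7), which would give $\sigma\mid\overbar{q}=\overbar{p}^{\,n}$, a contradiction.

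The crux is to factor $q=q''q'$ with $q'$ a cycle at $j$ of homology class $u$. Granting this, $\overbar{q'}\mid\overbar{q}=\overbar{p}^{\,n}$, so $\sigma\nmid\overbar{q'}$ and $\psi(q')\in\hat{\mathcal C}'^{\,u}_{\psi(j)}$ by Proposition \ref{Bastuff}.4; since also $\psi(p)\in\hat{\mathcal C}'^{\,u}_{\psi(i)}$, Proposition \ref{Bastuff}.5 gives $\overbar{q'}=\overbar{\psi(q')}=\overbar{\psi(p)}=\overbar{p}$, whence $\overbar{p}=\overbar{q'}\in\bar\tau_\psi(e_jAe_j)$, contradicting the hypothesis. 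To obtain the factorization one argues that the lift $q^+$, which runs from $j^+$ to $j^+ + nu$ without a cyclic subpath, must pass through the translate $j^+ + u$: on the $A'$ side $\psi(q)$ equals $w_1^{\,n}$, where $w_1$ is a path representative of the (unique, by Proposition \ref{Bastuff}.5--7) element of $\hat{\mathcal C}'^{\,u}_{\psi(j)}$ and $\sigma\nmid\overbar{p}^{\,n}$ is used, and the concatenation lift of $w_1^{\,n}$ meets $\psi(j)^+ + ku$ for $k=0,\dots,n$; one then transfers this passage back through $\psi^+$, using that $\psi$ only collapses the contracted subquivers and that $\psi(q)\in\hat{\mathcal C}'$. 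Pinning down a representative of $q$ whose lift meets $j^+ + u$ is the main obstacle, and it is exactly here that cancellativity of $A'$ and the combinatorics of the contraction enter essentially.

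The second assertion is then immediate: if $\overbar{p}\notin R=\cap_k\bar\tau_\psi(e_kAe_k)$, then $\overbar{p}\notin\bar\tau_\psi(e_jAe_j)$ for some $j$, and since $R\subseteq\bar\tau_\psi(e_jAe_j)$ the first assertion gives $\overbar{p}^{\,n}\notin\bar\tau_\psi(e_jAe_j)\supseteq R$ for all $n\ge 1$. For the last assertion, suppose $A$ is non-cancellative; then $R\neq S$ by Theorem \ref{cool!}. By Proposition \ref{Bastuff}.8 applied to $A'$ we may write $S=k[\sigma,g_1,\dots,g_r]$ with each $g_t$ a monomial not divisible by $\sigma$; as $\sigma\in R$ but $R\neq S$, some $g_t\notin R$, hence $g_t\notin\bar\tau_\psi(e_jAe_j)$ for some $j$. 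Since $S=k[\cup_k\bar\tau_\psi(e_kAe_k)]$ and each $\bar\tau_\psi(e_kAe_k)$ is spanned by monomials, the monomial $g_t$ equals a product $g_t=h_1\cdots h_s$ with each $h_a$ a monomial in some $\bar\tau_\psi(e_{i_a}Ae_{i_a})$; no $h_a$ is divisible by $\sigma$ (as $h_a\mid g_t$), and, because $\bar\tau_\psi(e_jAe_j)$ is closed under products, some $h_a\notin\bar\tau_\psi(e_jAe_j)$. Choosing a cycle $p\in e_{i_a}Ae_{i_a}$ with $\overbar{p}=h_a$ exhibits the required cycle, and $\overbar{p}^{\,n}\notin R$ for all $n$ by the second assertion.
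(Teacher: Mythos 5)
Your reduction of the first assertion to producing, from a cycle $q\in e_jAe_j$ with $\overbar{q}=\overbar{p}^{\,n}$, a \emph{factorization} $q=q''q'$ with $q'$ a cycle at $j$ of homology class $u$, is sound as far as it goes, and your use of Proposition \ref{Bastuff}.4--5 and Lemma \ref{u = v} to finish once such a $q'$ is in hand matches what the paper does at the corresponding point (the paper cites the same fact in the guise of \cite[Lemma 4.18]{B2}). But you have correctly identified that the factorization itself is the crux, and your sketch of how to obtain it does not close the gap. The issue is precisely the one you flag: $\psi(q)=w_1^{\,n}$ is an equality in $A'$, i.e.\ modulo $I'$, and this gives no control over the chosen lift $q^+$. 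A representative of a class in $e_jAe_j$ whose image equals $w_1^n$ need not have a lift passing through $j^+ + u$; the unit--cycle relations can push the lift away from all translates $j^+ + ku$ with $0<k<n$. Transferring ``which vertices the lift meets'' across a homotopy relation is exactly what fails, so this route does not establish the factorization.

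The paper's route around this obstacle is genuinely different, and is worth contrasting with yours. Instead of choosing $q\in e_jAe_j$ and trying to truncate it, the paper represents $p^n$ itself by a cycle $q\in e_ikQe_i$ whose lift passes through $j^+$ (possible since $\overbar{p}^{\,n}\in\bar\tau_\psi(e_jAe_j)$), and then works with the \emph{family} of translates $q^+_{v}\in\pi^{-1}(q)$ with $\operatorname{t}(q^+_v)=\operatorname{t}(q^+)+v$. A path $r^+$ from $j^+$ to $j^+ + u$ is then assembled from subpaths of $q^+$, $q^+_u$, and possibly further $q^+_{mu}$; no single lift of a single cycle is required to visit both $j^+$ and $j^+ + u$. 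Because every arrow of $r^+$ is an arrow of some translate of $q^+$, and $x_D\nmid\overbar{q}$ for the simple matching $D$ with $x_D\nmid\overbar{p}$, one gets $x_D\nmid\overbar{r}$, hence $\sigma\nmid\overbar{r}$, and then $r\in\mathcal{C}^u_j$ together with $p\in\mathcal{C}^u_i$ and \cite[Lemma 4.18]{B2} forces $\overbar{r}=\overbar{p}$, contradicting $\overbar{p}\notin\bar\tau_\psi(e_jAe_j)$. This ``use many translates in $Q^+$'' device is the missing idea in your argument; the factorization you want is replaced by a zig-zag through several lifts.

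Your derivations of the second and third assertions from the first are correct. The ``consequently'' part is immediate once one notes $R=\bigcap_k\bar\tau_\psi(e_kAe_k)$, as you do. For the ``moreover'' part you give a direct construction (decompose a generator $g_t$ of $S$ not in $R$ into monomials coming from corner rings and pick one not in $\bar\tau_\psi(e_jAe_j)$), whereas the paper argues by contraposition: if no such cycle existed, every cycle value not divisible by $\sigma$ would lie in $R$, forcing $S\subseteq R$ and hence cancellativity via Theorem \ref{cool!}. Both are fine; your version is slightly more explicit. The only genuine defect in the proposal is the unproved factorization step in the first assertion.
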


\begin{proof}
(i) Assume to the contrary that there is a cycle $p \in e_ikQe_i$ such that $\overbar{p} \not \in R$, $\sigma \nmid \overbar{p}$, and $\overbar{p}^n \in R$ for some $n \geq 2$.
Let $u \in \mathbb{Z}^2$ be such that $p \in \mathcal{C}^u_i$.
Since $\overbar{p}$ is not in $R$, there is a vertex $j \in Q_0$ such that
\begin{equation} \label{boltzmann brain}
\overbar{p} \not \in \bar{\tau}_{\psi}\left( e_jAe_j \right).
\end{equation}
Furthermore, since $\overbar{p}^n$ is in $R$, $p^n$ homotopes to a cycle $q \in e_ikQe_i$ that passes through $j$,
\begin{equation} \label{q = pn}
q \equiv p^n.
\end{equation}

For $v \in \mathbb{Z}^2$, denote by $q^+_v \in \pi^{-1}(q)$ the preimage with tail
$$\operatorname{t}(q^+_v ) = \operatorname{t}(q^+) + v \in Q_0^+.$$
Since $Q^+$ embeds in the plane $\mathbb{R}^2$, there is a path $r^+$ from $j^+$ to $j^+ + u$ that is constructed from subpaths of $q^+$, $q^+_u$, and $q^+_{mu}$, for some $m \in \mathbb{Z}$; see Figure \ref{proposition figure3}.
In particular, the cycle $r := \pi(r^+) \in e_jkQe_j$ is in $\mathcal{C}^u_j$.

Since $\sigma \nmid \overbar{p}$, there is a simple matching $x$ such that $x \nmid \overbar{p}$.
Whence $x \nmid \overbar{q}$ by (\ref{q = pn}).
Thus $x \nmid \overbar{r}$.
We therefore have
\begin{equation} \label{flux}
\sigma \nmid \overbar{p}, \ \ \ \sigma \nmid \overbar{r}, \ \ \text{ and } \ \ p,r \in \mathcal{C}^u.
\end{equation}
But if $s,t$ are cycles in $\mathcal{C}^u$, then there is an $\ell \in \mathbb{Z}$ such that $\overbar{s} = \overbar{t}\sigma^{\ell}$ \cite[Lemma 4.19]{B2}.
Consequently, (\ref{flux}) implies $\overbar{r} = \overbar{p}$.
Therefore
$$\overbar{p} = \overbar{r} \in \bar{\tau}_{\psi}\left( e_jAe_j \right),$$
contrary to (\ref{boltzmann brain}).

(ii) Now suppose no cycle $p$ exists for which $\overbar{p}^n \not \in R$ for each $n \geq 1$.
Then by Claim (i), if $q$ is a cycle satisfying $\overbar{q} \not \in R$, then $\sigma \mid \overbar{q}$.
By the contrapositive of this assumption, for each cycle $q$ satisfying $\sigma \nmid \overbar{q}$, we have $\overbar{q} \in R$.
But $S$ is generated by $\sigma$ and a set of monomials in $k[\mathcal{S}']$ not divisible by $\sigma$, by Proposition \ref{Bastuff}.8.
Whence $S \subseteq R$ since $\sigma \in R$.
Thus $S = R$.
Therefore $A$ is cancellative by Theorem \ref{cool!}.
\end{proof}

\begin{figure}
$$\xy 0;/r.3pc/:
(-30,-14)*+{\text{\scriptsize{$i$}}}="1";
(-20,-14)*+{\text{\scriptsize{$i$}}}="2";
(-10,-14)*+{\text{\scriptsize{$i$}}}="3";
(10,-14)*+{\text{\scriptsize{$i$}}}="4";
(20,-14)*+{\text{\scriptsize{$i$}}}="5";
(30,-14)*+{\text{\scriptsize{$i$}}}="6";
(-15,7)*{}="7";
(-5,7)*{}="8";
(5,7)*{}="9";
(15,7)*{}="10";
(-2.5,3.5)*{\cdot}="11";
(0,0)*{}="12";
(7.5,3.5)*{\cdot}="13";
(17.5,3.5)*{\cdot}="14";
{\ar@{-}^{p^n = q}"1";"7"};{\ar"8";"11"};{\ar"11";"4"};{\ar@{-}@/^/"7";"8"};
{\ar@{-}^q"2";"8"};{\ar"9";"13"};{\ar"13";"5"};{\ar@{-}@/^/"8";"9"};
{\ar@{-}^q"3";"9"};{\ar"10";"14"};{\ar"14";"6"};{\ar@{-}@/^/"9";"10"};
{\ar_p"1";"2"};{\ar_p"2";"3"};{\ar_p"4";"5"};{\ar_p"5";"6"};
{\ar@[red]"11";"12"};{\ar@[red]_r"12";"9"};{\ar@[red]"9";"13"};
{\ar@{}^j"11";"11"};{\ar@{}^j"13";"13"};{\ar@{}^j"14";"14"};
\endxy$$
\caption{Setup for Proposition \ref{s^n}.  The paths are in the cover $Q^+$ (with the superscripts $^+$ omitted).  
The path $r^+$ is drawn in red, and its projection $r = \pi(r^+)$ to $Q$ is a cycle at $j$.}
\label{proposition figure3}
\end{figure}

\begin{Lemma} \label{great scott}
For each $i \in Q_0$, there is an inclusion $\bar{\tau}_{\psi}(Ze_i) \subseteq R$.
\end{Lemma}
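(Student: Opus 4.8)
The plan is to reduce the lemma to the statement that, for each fixed $z \in Z$, the element $\bar{\tau}_{\psi}(ze_i)$ is independent of $i \in Q_0$. First I would record the elementary point that since $z$ is central, $e_{\ell}(ze_k) = ze_{\ell}e_k = 0$ whenever $\ell \neq k$, so $ze_k \in e_kAe_k$ for every $k \in Q_0$ and in particular each $\bar{\tau}_{\psi}(ze_k)$ is defined. Since $Ze_i = \{ze_i \mid z \in Z\}$, it then suffices to prove $\bar{\tau}_{\psi}(ze_i) \in R$ for every $z \in Z$.

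Now fix $i,j \in Q_0$ and, using that the dimer quiver $Q$ is connected, choose a path $p \in e_jAe_i$. Centrality of $z$ gives the identity $(ze_j)p = zp = pz = p(ze_i)$ in $A$. Applying the algebra homomorphism $\tau\psi : A \to M_{|Q'_0|}(k[x_D \mid D \in \mathcal{S}'])$ and writing $\tau\psi(ze_k) = \bar{\tau}_{\psi}(ze_k)e_{kk}$ and $\tau\psi(p) = \bar{\tau}_{\psi}(p)e_{ji}$, comparison of the $(j,i)$-entries turns the identity into $\bar{\tau}_{\psi}(ze_j)\,\bar{\tau}_{\psi}(p) = \bar{\tau}_{\psi}(p)\,\bar{\tau}_{\psi}(ze_i)$ inside the polynomial ring $k[x_D \mid D \in \mathcal{S}']$, which is an integral domain. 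Provided $\bar{\tau}_{\psi}(p) \neq 0$, I can cancel it and conclude $\bar{\tau}_{\psi}(ze_i) = \bar{\tau}_{\psi}(ze_j)$.

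To see that $\bar{\tau}_{\psi}(p) \neq 0$: the contraction $\psi$ sends $p$ to a path of $Q'$ (the arrows of $p$ not lying in $Q_1^*$, composed — or a trivial path if all of them lie in $Q_1^*$), so $\bar{\tau}_{\psi}(p) = \bar{\tau}(\psi(p))$ is a product of the monomials $\prod_{D \in \mathcal{S}',\, D \ni a}x_D$ over the arrows $a$ of $\psi(p)$, hence a nonzero monomial. With this in hand, for every $j \in Q_0$ we get $\bar{\tau}_{\psi}(ze_i) = \bar{\tau}_{\psi}(ze_j) \in \bar{\tau}\psi(e_jAe_j)$ since $ze_j \in e_jAe_j$, so $\bar{\tau}_{\psi}(ze_i) \in \bigcap_{j \in Q_0}\bar{\tau}\psi(e_jAe_j) \subseteq R$. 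Letting $z$ range over $Z$ gives $\bar{\tau}_{\psi}(Ze_i) \subseteq R$.

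I do not expect a real obstacle here; the only step meriting a sentence of justification is the nonvanishing of $\bar{\tau}_{\psi}(p)$, which reduces to the facts that a contraction carries a path of $Q$ to a path of $Q'$ and that $\bar{\tau}$ takes a monomial value on every path.
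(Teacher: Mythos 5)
Your proof is correct, and it is a genuinely different route from the paper's. The paper's proof is a one-line chain that cites two external results: Lemma~\ref{Bbstuff}.4 (that $\operatorname{nil}Z = \ker\psi \cap Z$, from [B7, Theorem 1.1.1]) to rewrite $\bar{\tau}_{\psi}(Ze_i) = \bar{\tau}_{\psi}\bigl((Z/\operatorname{nil}Z)e_i\bigr)$, and then [B7, Theorem 1.1.3] directly for the inclusion into $R$. Your argument is self-contained within the tools of this paper: you prove the sharper fact that $\bar{\tau}_{\psi}(ze_i) = \bar{\tau}_{\psi}(ze_j)$ for all $i,j \in Q_0$ by transporting $z$ along a connecting path $p$ and cancelling the nonzero monomial $\bar{\tau}_{\psi}(p)$ in the domain $k[x_D \mid D \in \mathcal{S}']$, which immediately places $\bar{\tau}_{\psi}(ze_i)$ in the defining intersection $\bigcap_j \bar{\tau}\psi(e_jAe_j) \subseteq R$. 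This buys you transparency and avoids quoting the central-nilradical machinery; the trade-off is that the paper's proof, by routing through $\operatorname{nil}Z$, makes explicit the relationship between the kernel of $\psi$ on $Z$ and the nilradical, which is thematically relevant elsewhere in the article.

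One small point of care: $\tau\psi$ is not an algebra homomorphism on all of $A$, since $\psi$ is only a $k$-linear map (it collapses orthogonal vertex idempotents when vertices merge, so it is not multiplicative on arbitrary products). However, $\psi$ \emph{is} multiplicative on composable products, i.e.\ on the multiplication maps $e_{\ell}Ae_k \times e_kAe_m \to e_{\ell}Ae_m$, and the identity $(ze_j)p = p(ze_i)$ lives entirely inside such composable pieces, so the computation you run after applying $\tau\psi$ is legitimate. It would be cleaner to phrase the step as ``$\psi$ is multiplicative on composable products, and $\tau$ is an algebra homomorphism'' rather than calling $\tau\psi$ an algebra homomorphism. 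Similarly the matrix-unit indices should formally read $e_{\psi(j),\psi(i)}$ etc., though this is only notation. Neither of these affects the validity of the argument.
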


\begin{proof}
For $i \in Q_0$, we have
$$\bar{\tau}_{\psi}(Ze_i) = \bar{\tau} \psi \left( \left( Z/ (\operatorname{ker}\psi \cap Z) \right)e_i \right) \stackrel{\textsc{(i)}}{=} \bar{\tau}_{\psi} \left( \left( Z/\operatorname{nil}Z \right)e_i \right) \stackrel{\textsc{(ii)}}{\subseteq} R,$$
where (\textsc{i}) holds by Lemma \ref{Bbstuff}.4, and (\textsc{ii}) holds by \cite[Theorem 4.1]{B7}.
\end{proof}

\begin{Theorem} \label{nonnoetherian theorem}
Suppose $A$ is non-cancellative.
Then $A$, its center $Z$, its reduced center $\hat{Z} := Z/\operatorname{nil}Z$, and its ghor center $R$, are all nonnoetherian algebras.
\end{Theorem}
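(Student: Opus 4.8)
The plan is to obtain all four assertions from two explicit strictly increasing chains of ideals together with two soft reductions: $A$ has $\Lambda$ as a ring quotient and $Z$ has $\hat{Z}=Z/\operatorname{nil}Z$ as a ring quotient, so --- quotients of noetherian rings being noetherian --- it suffices to prove that $R$ and $\hat{Z}$ are nonnoetherian (that $\Lambda$ is so is Proposition \ref{homotopy not noetherian}). Both chains are built from a single ``bad'' monomial, produced as follows. Using Proposition \ref{s^n} and the hypothesis that $A$ is non-cancellative, fix a vertex $i$ and a cycle $p\in e_iAe_i$ with $g:=\bar{\tau}_{\psi}(p)\notin R$ and $\sigma\nmid g$, so that $g^{n}\notin R$ for every $n\geq 1$. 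By Lemma \ref{Bbstuff}.2 there is an $N\geq 1$ with $p^{n}\sigma_i^{N}\in Ze_i$ for all $n\geq 1$; applying $\bar{\tau}_{\psi}$ and Lemma \ref{great scott} gives $\sigma^{N}g^{n}=\bar{\tau}_{\psi}(p^{n}\sigma_i^{N})\in\bar{\tau}_{\psi}(Ze_i)\subseteq R$ for all $n\geq 1$ (and $\sigma=\bar{\tau}_{\psi}(\sigma_i)\in R$). Thus, inside the polynomial ring $k[x_D\mid D\in\mathcal{S}']$, all of the monomials $\sigma^{N}g,\sigma^{N}g^{2},\sigma^{N}g^{3},\dots$ lie in $R$, while no power of $g$ does.

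To see that $R$ is nonnoetherian I would show that the chain $I_k:=(\sigma^{N}g,\sigma^{N}g^{2},\dots,\sigma^{N}g^{k})$ of ideals of $R$ is strictly increasing. The key structural point is that $R=k[\bigcap_i\bar{\tau}_{\psi}(e_iAe_i)]$ is a \emph{monomial} subalgebra of $k[x_D\mid D\in\mathcal{S}']$: each $\bar{\tau}_{\psi}(e_iAe_i)$ is the $k$-span of the monomials $\bar{\tau}(\psi(c))$ with $c$ a cycle at $i$, hence a monomial subalgebra, and an intersection of monomial subalgebras is again one. Consequently, if $\sigma^{N}g^{k+1}$ lay in $I_k$, then, writing out the defining relation and using the linear independence of monomials, one of the generators $\sigma^{N}g^{j}$ ($1\leq j\leq k$) would divide $\sigma^{N}g^{k+1}$ by a monomial of $R$; cancelling yields $g^{\,k+1-j}\in R$ with $1\leq k+1-j\leq k$, contradicting the previous paragraph. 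Hence $I_k\subsetneq I_{k+1}$ for all $k$.

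For $\hat{Z}$ I would run the same chain one step down. For each $n$ pick $z_n\in Z$ with $z_ne_i=p^{n}\sigma_i^{N}$ (possible since $p^{n}\sigma_i^{N}\in Ze_i$), and consider the ideals $\bar{I}_k:=(\bar{z}_1,\dots,\bar{z}_k)$ of $\hat{Z}$, where $\bar{z}_n$ denotes the class of $z_n$. If $\bar{z}_{k+1}\in\bar{I}_k$, then $z_{k+1}-\sum_{j=1}^{k}a_jz_j\in\operatorname{nil}Z=\ker\psi\cap Z$ for some $a_j\in Z$, by Lemma \ref{Bbstuff}.4. Applying the ring homomorphism $\psi$, restricting to the corner $e_iA'e_i$, and applying $\bar{\tau}$ --- which is multiplicative on $e_iA'e_i$ and satisfies $\bar{\tau}(\psi(\sigma_i))=\sigma$ and $\bar{\tau}(\psi(p))=g$ --- converts this into the identity $\sigma^{N}g^{k+1}=\sum_{j=1}^{k}b_j\,\sigma^{N}g^{j}$ in $k[x_D\mid D\in\mathcal{S}']$, where $b_j=\bar{\tau}_{\psi}(a_je_i)\in R$ by Lemma \ref{great scott}; that is, $\sigma^{N}g^{k+1}\in I_k$, which the previous paragraph forbids. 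Hence $\bar{I}_k\subsetneq\bar{I}_{k+1}$ for all $k$, so $\hat{Z}$ is nonnoetherian, and therefore so is $Z$; and $A$ is nonnoetherian because $\Lambda$ is a nonnoetherian ring quotient of it.

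The step I expect to take real care is the verification that $R=k[\bigcap_i\bar{\tau}_{\psi}(e_iAe_i)]$ is genuinely a monomial subalgebra of $k[x_D\mid D\in\mathcal{S}']$, since the rigidity of monomial ideals there is what drives both chains. The remaining ingredients are routine or already in hand: $\bar{\tau}_{\psi}(Ze_i)\subseteq R$ is Lemma \ref{great scott}, $\operatorname{nil}Z=\ker\psi\cap Z$ is Lemma \ref{Bbstuff}.4, and the identities $\bar{\tau}_{\psi}(\sigma_i)=\sigma$ and the multiplicativity of $\bar{\tau}$ on corner rings follow from the definition of $\tau$ and the structure of cyclic contractions \cite{B2}.
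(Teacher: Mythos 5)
Your proof is correct, and it takes a genuinely different route from the paper's. The paper runs essentially one chain argument directly inside each of the four rings $A$, $Z$, $R$, $\hat Z$: it picks a vertex $j$ with $\overbar{p}^{\,n}\notin\bar\tau_\psi(e_jAe_j)$ for all $n$, constructs $q_n\in Ze_j$ with $\bar\tau_\psi(q_n)=\overbar{p}^{\,n}\sigma^N$, and shows the chain $\langle q_1\rangle\subsetneq\langle q_1,q_2\rangle\subsetneq\cdots$ does not stabilize in $A$, in $Ze_j$, in $R$, and (via $\operatorname{nil}Z=\ker\psi\cap Z$) in $\hat Z$, with the key combinatorial input being that $\bar\tau_\psi(e_jAe_j)$ is a \emph{monomial} subalgebra of $k[x_D\mid D\in\mathcal S']$. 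You instead run the chain only inside $R$ and $\hat Z$, observing that $R=k[\bigcap_i\bar\tau_\psi(e_iAe_i)]$ is itself a monomial subalgebra (this is correct: each $\bar\tau_\psi(e_iAe_i)$ is a span of monomials, an intersection of such spans is spanned by their common monomials, and products of monomials are monomials), and that $\overbar{p}^{\,n}\notin R$ by the ``moreover'' clause of Proposition \ref{s^n}; you then deduce $A$ nonnoetherian from Proposition \ref{homotopy not noetherian} because $\Lambda$ is a ring quotient of $A$, and $Z$ nonnoetherian because $\hat Z$ is a ring quotient of $Z$. Your decomposition is tidier: it works entirely at the single vertex $i$ carrying the cycle $p$ (avoiding the passage from $i$ to a separate vertex $j$), and your $\hat Z$ step makes fully explicit the reduction to the $R$ chain that the paper's Claim (iv) leaves terse. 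What the paper's version buys in exchange is self-containment: it proves $A$ nonnoetherian directly by the chain, rather than as a corollary of Proposition \ref{homotopy not noetherian}, which rests on several external results from \cite{B2,B4,B6}. There is no circularity in your route, since Proposition \ref{homotopy not noetherian} precedes the theorem and does not depend on it.
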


\begin{proof}
(i) We first claim that $A$ is nonnoetherian.
Indeed, since $A$ is non-cancellative, there is a cycle $p \in e_iAe_i$ and vertex $j \in Q_0$ such that for each $n \geq 1$,
\begin{equation} \label{not in R}
\overbar{p}^n \not \in \bar{\tau}_{\psi}(e_jAe_j),
\end{equation}
by Proposition \ref{s^n}.
Furthermore, there is an $N \geq 1$ such that for each $n \geq 1$,
\begin{equation*} \label{p^m sigma}
\overbar{p}^n \sigma^N \in \bar{\tau}_{\psi}(Ze_j) \subseteq \bar{\tau}_{\psi}(e_jAe_j).
\end{equation*}
by Lemma \ref{Bbstuff}.2.
Let $q_n \in Ze_j$ be such that
$$\overbar{q}_n := \overbar{p}^n \sigma^N.$$

Consider the ascending chain of (two-sided) ideals of $A$,
$$\left\langle q_1 \right\rangle \subseteq \left\langle q_1,q_2 \right\rangle \subseteq \left\langle q_1,q_2,q_3 \right\rangle \subseteq \cdots.$$
Assume to the contrary that the chain stabilizes; then for some $m \geq 1$ there are elements $a_1, \ldots, a_m \in A$ satisfying
\begin{equation} \label{Doc!}
q_m = \sum_{n = 1}^{m-1} a_nq_n.
\end{equation}
Since each $q_n$ is in $e_jAe_j$, we have
$$q_m = e_jq_m = e_j \sum_{n<m} a_n e_j q_n = \sum_{n<m} (e_j a_n e_j) q_n.$$
In particular, we may take each $a_n$ to be in $e_jAe_j$.
Furthermore, (\ref{Doc!}) implies
$$\overbar{p}^m \sigma^N = \overbar{q}_m = \sum_{n<m} \overbar{a_n q_n} = \sum_{n<m} \overbar{a}_n\overbar{q}_n = \sum_{n<m} \overbar{a}_n \overbar{p}^n \sigma^N.$$
Thus, since $k[\mathcal{S}']$ is an integral domain, we have
\begin{equation} \label{p^m}
\overbar{p}^m = \sum_{n<m} \overbar{a}_n \overbar{p}^n,
\end{equation}
with each $\overbar{a}_n$ in $\bar{\tau}_{\psi}(e_jAe_j)$.

Now the subalgebra $\bar{\tau}_{\psi}(e_jAe_j)$ is generated by a set of monomials in the polynomial ring $k[\mathcal{S}']$.
Thus each polynomial $\overbar{a}_n$ is a $k^{\times}$-linear combination of distinct monomials $\alpha_{n,\ell}$ in $\bar{\tau}_{\psi}(e_jAe_j)$.
Furthermore, (\ref{p^m}) implies that for some $0 \leq n < m$, there is an $\ell$ for which $\alpha_{n,\ell} = \overbar{p}^{m-n}$.
But then $\overbar{p}^{m-n}$ is in $\bar{\tau}_{\psi}(e_jAe_j)$, contrary to (\ref{not in R}).
Therefore $A$ is nonnoetherian.

(ii) We claim that $Z$ is nonnoetherian.
The cycles $q_n$ were chosen to be in $Ze_j$, so we may consider the chain of ideals of $Ze_j$,
$$(q_1)Z \subseteq (q_1,q_2)Z \subseteq (q_1,q_2,q_3)Z \subseteq \cdots.$$
This chain does not stabilize by the argument in Claim (i) since $Ze_j \subseteq e_jAe_j$.
Thus $Ze_j$ is nonnoetherian.
Consequently, $Z$ is nonnoetherian since the vertex idempotents are orthogonal.

(iii) We claim that $R$ is nonnoetherian.
By Lemma \ref{great scott}, the monomials $\overbar{q}_n$ are in $R$.
Thus we may consider the chain of ideals of $R$,
$$(\overbar{q}_1)R \subseteq (\overbar{q}_1,\overbar{q}_2)R \subseteq (\overbar{q}_1,\overbar{q}_2,\overbar{q}_3)R \subseteq \cdots.$$
This chain does not stabilize by the argument in Claim (i) since $R \subseteq \bar{\tau}_{\psi}(e_jAe_j)$.
Therefore $R$ is nonnoetherian.

(iv) Finally, we claim that the reduced center $\hat{Z}$ is nonnoetherian.
There are no cycles in $(\operatorname{nil}Z)e_j$ since $\operatorname{nil}Z = \operatorname{ker}\psi \cap Z$, by Lemma \ref{Bbstuff}.4.
In particular, the cycles $q_n \in Ze_j$ are not in $(\operatorname{nil}Z)e_j$, and so are nonzero in $\hat{Z}e_j$.
The claim then follows from Claim (ii).
\end{proof}

Although $Z$, $\hat{Z}$, and $R$ are nonnoetherian, it is shown in \cite[Theorem 1.1]{B6} that they each have Krull dimension 3 and are generically noetherian.
In particular, though they do not satisfy the ascending chain condition on all ideals, they do satisfy the ascending chain condition on prime ideals.

\begin{Lemma} \label{S over R}
If $A$ is a finitely generated $Z$-module, then $S$ is a finitely generated $R$-module.
\end{Lemma}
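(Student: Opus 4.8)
The plan is to push the hypothesis through the matrix homomorphism $\bar{\tau}_\psi$ and then invoke the two standard facts ``module-finite $\Rightarrow$ integral'' and ``finitely generated $R$-algebra by integral elements $\Rightarrow$ module-finite over $R$''.

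\emph{Step 1: the hypothesis descends to the vertex corner rings.} Suppose $A = \sum_{l=1}^{n} Z a_l$. Since $Z$ is central, $e_iZa_le_i = Ze_ia_le_i = (Ze_i)(e_ia_le_i)$, so
$$e_iAe_i \;=\; e_i\Big(\sum_{l} Za_l\Big)e_i \;=\; \sum_{l=1}^{n} (Ze_i)\,(e_ia_le_i),$$
and hence $e_iAe_i$ is a finitely generated $Ze_i$-module, with generating set $\{e_ia_le_i : l=1,\dots,n\}$.

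\emph{Step 2: apply $\bar{\tau}_\psi$ and land a module-finite, hence integral, extension of $R$.} The restriction of the algebra homomorphism $\tau_\psi \colon A \to M_{|Q_0|}(k[x_D \mid D\in\mathcal S'])$ to the corner $e_iAe_i$ maps into $e_{ii}M_{|Q_0|}(k[x_D\mid D\in\mathcal S'])e_{ii}$, and under the identification of this ring with $k[x_D\mid D\in\mathcal S']$ it is precisely the ring homomorphism $\bar{\tau}_\psi|_{e_iAe_i}$; in particular $\bar{\tau}_\psi(xy)=\bar{\tau}_\psi(x)\bar{\tau}_\psi(y)$ for $x,y\in e_iAe_i$. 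Applying $\bar{\tau}_\psi$ to the expression of Step 1 gives
$$\bar{\tau}_\psi(e_iAe_i) \;=\; \sum_{l=1}^{n} \bar{\tau}_\psi(Ze_i)\,\bar{\tau}_\psi(e_ia_le_i),$$
so $\bar{\tau}_\psi(e_iAe_i)$ is a finitely generated $\bar{\tau}_\psi(Ze_i)$-module. By Lemma \ref{great scott} we have $\bar{\tau}_\psi(Ze_i)\subseteq R$, and from $R = k\big[\cap_{j\in Q_0}\bar{\tau}_\psi(e_jAe_j)\big]$ together with the fact that $\bar{\tau}_\psi(e_iAe_i)$ is a $k$-subalgebra of $k[x_D\mid D\in\mathcal S']$ we get $R\subseteq\bar{\tau}_\psi(e_iAe_i)$. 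Since the generators $\bar{\tau}_\psi(e_ia_le_i)$ lie in the ring $\bar{\tau}_\psi(e_iAe_i)\supseteq R$, enlarging scalars from $\bar{\tau}_\psi(Ze_i)$ to $R$ keeps the sum inside $\bar{\tau}_\psi(e_iAe_i)$, whence
$$\bar{\tau}_\psi(e_iAe_i) \;=\; \sum_{l=1}^{n} R\,\bar{\tau}_\psi(e_ia_le_i)$$
is a finitely generated $R$-module. Consequently every element of $\bar{\tau}_\psi(e_iAe_i)$ is integral over $R$.

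\emph{Step 3: assemble $S$.} By definition $S = k\big[\cup_{i\in Q_0}\bar{\tau}_\psi(e_iAe_i)\big]$, and since $k\subseteq R\subseteq S$ this is the $R$-subalgebra of $k[x_D\mid D\in\mathcal S']$ generated by $\cup_{i\in Q_0}\bar{\tau}_\psi(e_iAe_i)$. Because $Q_0$ is finite and each $\bar{\tau}_\psi(e_iAe_i)$ is a finitely generated $R$-module, the union of finite $R$-module generating sets of the $\bar{\tau}_\psi(e_iAe_i)$ over all $i\in Q_0$ is a finite set $g_1,\dots,g_m$ with $S = R[g_1,\dots,g_m]$, and each $g_j$ lies in some $\bar{\tau}_\psi(e_iAe_i)$ and so is integral over $R$ by Step 2. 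A finitely generated $R$-algebra generated by elements integral over $R$ is a finitely generated $R$-module, so $S$ is a finitely generated $R$-module. (Equivalently, one may skip naming generators: the subring of $k[x_D\mid D\in\mathcal S']$ generated by the finitely many module-finite extensions $\bar{\tau}_\psi(e_iAe_i)$ of $R$ is their compositum, which is module-finite over $R$, and that compositum is exactly $S$.)

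I do not expect a genuine obstacle here; the one thing to be careful about is bookkeeping the chain of inclusions $\bar{\tau}_\psi(Ze_i)\subseteq R\subseteq\bar{\tau}_\psi(e_iAe_i)\subseteq S$ together with the multiplicativity of $\bar{\tau}_\psi$ on a single corner ring, since it is these that let one transfer ``finitely generated over $\bar{\tau}_\psi(Ze_i)$'' up to ``finitely generated over $R$'' and then integrality up to $S$.
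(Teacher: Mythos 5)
Your proof is correct and follows essentially the same route as the paper's: pass the module generators $a_1,\dots,a_N$ to the vertex corner rings, apply the multiplicative map $\bar{\tau}_\psi$ on each corner, descend the coefficient ring from $\bar{\tau}_\psi(Ze_i)$ to $R$ via Lemma~\ref{great scott}, and then assemble $S$ as the $R$-algebra generated by the images of the corners. The paper's version finishes slightly more directly by observing that $S$ is $R$-spanned by the finitely many products $\prod_{i}\overbar{e_ia_{n(i)}e_i}$ (collecting each product by vertex index and using that each $\bar{\tau}_\psi(e_iAe_i)$ is contained in $\sum_n R\,\overbar{e_ia_ne_i}$), whereas you package the same information as integrality of the generators over $R$ plus the standard fact that an $R$-algebra generated by finitely many integral elements is module-finite; both formulations of Step~3 are valid. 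One small improvement in your write-up worth noting: you explicitly supply the reverse inclusion $\sum_n R\,\overbar{e_ia_ne_i}\subseteq\bar{\tau}_\psi(e_iAe_i)$, using $R\subseteq\bar{\tau}_\psi(e_iAe_i)$, to get genuine equality and hence module-finiteness of the corner image over $R$; the paper asserts finiteness from the forward containment alone, which on its face is not automatic when $R$ may be nonnoetherian, although the paper's subsequent computation of $S$ in fact only uses the containment and not the finiteness claim.
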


\begin{proof}
Suppose $A$ is finite over $Z$.
Then there are elements $a_1, \ldots, a_N \in A$ such that
$$A = \sum_{n = 1}^N Z a_n.$$
Whence, for each $i \in Q_0$,
\begin{multline*}
\bar{\tau}_{\psi}\left( e_iAe_i \right) = \bar{\tau}_{\psi} \left( e_i \sum_n Z a_n e_i \right) = \bar{\tau}_{\psi}\left( \sum_n Ze_i (e_i a_n e_i) \right) \\ = \sum_n \bar{\tau}_{\psi}(Ze_i) \overbar{e_ia_n e_i} \stackrel{\textsc{(i)}}{\subseteq} \sum_n R \, \overbar{e_i a_n e_i},
\end{multline*}
where (\textsc{i}) holds by Lemma \ref{great scott}.
Thus $\bar{\tau}_{\psi}\left( e_iAe_i \right)$ is finite over $R$.
But then
\begin{align*}
S & = k\left\langle \prod_{i \in Q_0} \bar{\tau}_{\psi}(e_iAe_i) \right\rangle \\
& = R\left\langle \prod_{i \in Q_0} \bar{\tau}_{\psi}(e_iAe_i) \right\rangle \\
& = R\left\langle \prod_{i \in Q_0} \overbar{e_ia_{n(i)} e_i} \ \mid \ 1 \leq n(i) \leq N \right\rangle.
\end{align*}
Therefore $S$ is finite over $R$.
\end{proof}

\begin{Theorem} \label{non-finitely generated}
Suppose $A$ is non-cancellative.
Then $A$ is an infinitely generated $Z$-module.
\end{Theorem}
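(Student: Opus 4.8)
The plan is to argue by contradiction: assuming $A$ is a finitely generated $Z$-module, I will deduce that the homotopy center $R$ is noetherian, which contradicts Theorem \ref{nonnoetherian theorem} since $A$ is non-cancellative.

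First I would apply Lemma \ref{S over R}: if $A$ is finite over $Z$, then the cycle algebra $S$ is a finitely generated $R$-module. Next I would observe that $S$ is a noetherian ring. Indeed, the target $A'$ of the cyclic contraction $\psi$ is cancellative, hence nondegenerate (each of its arrows lies in a simple, and thus a perfect, matching), and $S$ equals the center of $A' = \Lambda'$; so by Proposition \ref{Bastuff}.8 applied to $A'$ the algebra $S \cong \bar{\tau}(e_iA'e_i)$ is a finitely generated commutative $k$-algebra, generated over $k$ by $\sigma$ and finitely many monomials. Therefore $S$ is noetherian by the Hilbert basis theorem.

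Now the ring extension $R \subseteq S$ is module-finite with $S$ noetherian, so the Eakin--Nagata theorem forces $R$ to be noetherian. (The inclusion $R \subseteq S$ is immediate from $R = k[\cap_{i \in Q_0}\bar{\tau}_{\psi}(e_iAe_i)] \subseteq k[\cup_{i \in Q_0}\bar{\tau}_{\psi}(e_iAe_i)] = S$.) But Theorem \ref{nonnoetherian theorem} asserts that $R$ is nonnoetherian whenever $A$ is non-cancellative --- a contradiction. Hence $A$ is an infinitely generated $Z$-module.

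I expect the only delicate points to be the two invoked inputs --- that $S$ is noetherian (via Proposition \ref{Bastuff}.8 for $A'$) and that $R$ is nonnoetherian (Theorem \ref{nonnoetherian theorem}) --- after which Eakin--Nagata closes the argument. One could instead attempt a hands-on proof, transporting the non-stabilizing chain $(\overbar{q}_1)R \subseteq (\overbar{q}_1,\overbar{q}_2)R \subseteq \cdots$ constructed in the proof of Theorem \ref{nonnoetherian theorem} through a finite $Z$-generating set of $A$ to contradict finite generation directly, but routing through the cycle algebra and Eakin--Nagata is cleaner and avoids re-deriving that chain.
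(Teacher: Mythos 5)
Your proof is correct, and its skeleton matches the paper's: both rely on Lemma \ref{S over R} to transfer finiteness from the extension $Z \subseteq A$ to $R \subseteq S$, on Proposition \ref{Bastuff}.8 to know that $S$ is a finitely generated commutative $k$-algebra, and on Theorem \ref{nonnoetherian theorem} for the nonnoetherianity of $R$. The one substantive difference is the commutative-algebra tool used to close the loop: you invoke Eakin--Nagata (a subring over which a noetherian ring is module-finite is itself noetherian), whereas the paper invokes the Artin--Tate lemma in the tower $k \subseteq R \subseteq S$ (since $S$ is a finitely generated $k$-algebra, module-finiteness of $S$ over $R$ would force $R$ to be a finitely generated $k$-algebra, contradicting that $R$ is nonnoetherian and hence infinitely generated over $k$). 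The two are interchangeable here because, for commutative $k$-algebras, finite generation implies noetherian by Hilbert basis; Artin--Tate is the more elementary lemma and is the paper's choice, while Eakin--Nagata targets noetherianity directly and reaches the same contradiction just as cleanly. Both are standard and your route is perfectly sound.
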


\begin{proof}
The cycle algebra $S$ is a finitely generated $k$-algebra by Proposition \ref{Bastuff}.8, whereas $R$ is an infinitely generated $k$-algebra by Theorem \ref{nonnoetherian theorem}.
Thus $S$ is infinite over $R$ by the Artin-Tate lemma.
Therefore $A$ is infinite over $Z$ by Lemma \ref{S over R}.
\end{proof}

An immediate question following Theorem \ref{non-finitely generated} is whether non-cancellative dimer algebras satisfy a polynomial identity.
We note that ghor algebras are always PI since they are subalgebras of matrix rings over commutative rings \cite[Introduction]{B2}.

\begin{Proposition} \label{inout}
Suppose $\psi: A \to A'$ is a cyclic contraction.
If the head (or tail) of each arrow in $Q_1^*$ has indegree $1$, then $A$ contains a free subalgebra.
In particular, $A$ is not PI.
\end{Proposition}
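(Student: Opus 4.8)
The plan is to localize the failure of cancellativity at a contracted arrow whose head (or, dually, tail) is bivalent, read off a non-cancellative pair from the two faces meeting there, and inflate it to a two-generator free subalgebra of a vertex corner ring. For the reductions: we may assume $Q_1^*\neq\emptyset$, so that $A$ is non-cancellative by Theorem~\ref{cool!}. Reversing all arrows of a dimer quiver on $T^2$ again gives a dimer quiver on $T^2$ and interchanges $\operatorname{h}$ and $\operatorname{t}$; moreover, at each vertex the incident arrows alternate between incoming and outgoing as one travels around it (each face-corner lies between one in-arrow and one out-arrow), so every vertex is balanced, i.e.\ has equal in- and out-degree, and the reversal therefore interchanges the two hypotheses. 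Hence we may fix $\delta\in Q_1^*$ with $j:=\operatorname{h}(\delta)$ of indegree $1$; by balance $j$ has outdegree $1$, with unique outgoing arrow $b\colon j\to k$, and we write $\delta\colon i\to j$.

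\emph{The local non-cancellative pair.} The two unit cycles through $\delta$ are exactly the two unit cycles through $b$, and, based at $i$, each is of the form $pb\delta$, resp.\ $qb\delta$, for a path $k\to i$ not using $\delta$ or $b$; distinct faces have distinct boundary cycles (a shared boundary would be separating, hence null-homotopic, on $T^2$, forcing a non-simply-connected complementary region), so $p\neq q$, and $p,q$ are simple. The defining relations of $A$ attached to the arrows $b$ and $\delta$ read $\delta p\equiv\delta q$ and $pb\equiv qb$ in $A$. One checks $p\not\equiv q$ in $A$ — using that no chain of face relations identifies two distinct simple paths that are proper subpaths of unit cycles — so $(p,q)$ is a non-cancellative pair, with cancellation failing on both sides.

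\emph{Building the free subalgebra.} Passing, if necessary, to the non-bivalent endpoints of the maximal bivalent chain through $j$ (such endpoints exist because $Q$, being a dimer quiver, is not a single oriented cycle), we may assume there is a path $\gamma\colon i\to k$ avoiding the arrows occurring in the two relations above. Put $x:=p\gamma$ and $y:=q\gamma$, cycles at $i$. I claim $k\langle x,y\rangle\subseteq e_iAe_i$ is free on $x,y$, whence $A$ contains a free algebra on two generators and so is not PI (a free algebra on two generators satisfies no polynomial identity, and subalgebras of PI algebras are PI). Since $kQ/I$ has a $k$-basis of $\equiv$-classes of paths and no path is zero, it suffices to show that distinct words $W\neq W'$ in $x,y$ — necessarily $W=c_1\gamma c_2\gamma\cdots c_m\gamma$ with $c_\ell\in\{p,q\}$ — satisfy $W\not\equiv W'$. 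Choosing $\gamma$ generically — not beginning with $b$, not ending with $\delta$, not containing $b\delta$, and, using nondegeneracy and the finiteness of the face set, with the $c_\ell\mid\gamma$ and $\gamma\mid c_\ell$ junctions meeting no half of any unit cycle other than those already recorded — forces that the only defining relations applicable to $W$ are $\delta p\equiv\delta q$ and $pb\equiv qb$, and these never apply since $\delta p,\delta q,pb,qb$ do not occur as subpaths of $W$; hence $W$ is rigid, so $W\equiv W'$ forces $W=W'$ as paths, and the words coincide.

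\emph{Main obstacle.} The crux is this last genericity/rigidity step: producing a return path $\gamma$ for which no sequence of defining relations of $A$ can merge two distinct words in $x$ and $y$ — in particular excluding $xy\equiv yx$. This cannot be detected by $\eta$ or $\tau$, since by Proposition~\ref{Bastuff}.1--2 these homomorphisms already identify $p$ with $q$, hence all words of a given length; the argument must be run directly in $kQ/I$, controlling how flips across blocks act on a word. Subsidiary points are the verification that $p\not\equiv q$ in $A$ and the bivalent-chain reduction used to produce $\gamma$.
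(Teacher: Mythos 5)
Your overall strategy matches the paper's: extract a non-cancellative pair $p,q$ from the two unit cycles through a contracted arrow, append a return path to form two cycles, and show those cycles generate a free subalgebra. But you have left the crucial step as an acknowledged gap, and your stated reason for why the labelling maps cannot help is a misdiagnosis of the problem.

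The gap is the ``genericity/rigidity'' step: you need a return path $\gamma$ such that no chain of flips in $kQ/I$ merges two distinct words in $p\gamma, q\gamma$, and you explicitly flag this as the main obstacle without supplying a proof. You also assert $p\not\equiv q$ in $A$ without justification. You then argue that $\bar\eta$ and $\bar\tau$ cannot settle the rigidity question because they identify $p$ with $q$, so the argument ``must be run directly in $kQ/I$.'' This is exactly backwards relative to what the paper does. The paper does not try to use $\bar\tau_\psi$ to distinguish $rp$ from $rq$ (indeed $\bar\tau_\psi(rp)=\bar\tau_\psi(rq)$); it uses $\bar\tau_\psi$ to show that the bivalent chain $b$ can never appear as a subpath of any $I$-representative of $rp$ or $rq$. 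Concretely: the leftmost non-contracted arrow $b'$ of $\psi(b)$ lies in a simple matching $D\in\mathcal{S}'$, so $x_D\mid\bar\tau_\psi(b)$; the return path $r$ is chosen (via a $D$-avoiding path $s$ in $Q'$ and surjectivity of $\psi$, which your indegree-$1$ hypothesis supplies) so that $x_D\nmid\bar\tau_\psi(rp)=\bar\tau_\psi(rq)$. Since $\bar\tau_\psi$ is well defined on $A$ and multiplicative, no representative of a word in $rp,rq$ can contain $b$, and since the only defining relations linking $p$ to $q$ pass through the bivalent chain $b$, no relation ever applies --- the words are rigid. Your plan has no substitute for this mechanism, so as written the proof does not close.

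Two smaller points. Your reduction to the ``head of indegree $1$'' case by reversing all arrows of the dimer quiver is a legitimate duality, but it is not what the paper does; the paper instead reduces by walking back along the bivalent chain until $\operatorname{t}(a)$ has indegree at least $2$, which is the reduction actually needed to set up the maximal path $b$. And your claim that $p\not\equiv q$ in $A$ ``because no chain of face relations identifies two distinct simple proper subpaths of unit cycles'' is precisely the kind of direct $kQ/I$ combinatorics that the paper avoids having to prove; it needs an argument.
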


\begin{proof}
Let $a \in Q_1^*$; then the indegree of $\operatorname{h}(a)$ is $1$ by assumption.
It suffices to suppose the indegree of $\operatorname{t}(a)$ is at least $2$.
Consider the paths $p,q$ and the path $b$ of maximal length such that $bap$ and $baq$ are unit cycles.
Let $b' \in Q'_1$ be a leftmost arrow subpath of $b$.
Since $b$ has maximal length, the indegree of $\operatorname{h}(b') = \operatorname{h}(b)$ is at least $2$.
In particular, $b'$ is not contracted.
Furthermore, no vertex in $Q'$ has indegree $1$ since $A'$ is cancellative.
Whence
$$\psi(bap) = b'p \ \ \ \text{ and } \ \ \ \psi(baq) = b'q$$
are unit cycles in $Q'$.
Therefore $p,q$ is a non-cancellative pair and $\overbar{p} = \overbar{q}$.

Since $A'$ is cancellative, there is a simple matching $x \in \mathcal{S}'$ which contains $b'$, by Theorem \ref{cool!}.
Furthermore, since $x$ is a simple matching, there is a path $s$ in $Q'$ from $\operatorname{h}(p)$ to $\operatorname{t}(p)$ which is a path of $Q' \setminus x$.
In particular,
$$x \nmid \bar{\tau}(sp) = \bar{\tau}(sq).$$

Since the indegree of the head of each contracted arrow is 1, $\psi$ is surjective.
Thus there is path $r$ in $Q$ from $\operatorname{h}(p)$ to $\operatorname{t}(p)$ satisfying $\psi(r) = s$.
Whence
$$x \nmid \bar{\tau}(sp) = \bar{\tau}_{\psi}(rp) = \bar{\tau}_{\psi}(rq).$$
But then $b$ is not a subpath of $rp$ or $rq$ (modulo $I$) since $x \mid \bar{\tau}_{\psi}(b')$.
Consequently, there are no relations between the cycles $rp$ and $rq$.
Therefore
$$k\left\langle rp, rq \right\rangle$$
is a free subalgebra of $A$.
\end{proof}

\begin{Remark} \rm{
An example of a (nondegenerate) dimer algebra, with no vertex of indegree $1$ and with a free subalgebra, is given in Figure \ref{example}.
Indeed, let $a,b$ be the red and blue arrows in $Q$.
Then $k\left\langle a,b \right\rangle$ is a free subalgebra of $A$, and so $A$ is not PI.
}\end{Remark}

\begin{Theorem} \label{iff}
Let $A$ be a non-degenerate dimer algebra.
The following are equivalent:
\begin{enumerate}
 \item $A$ is cancellative.
 \item $A$ is noetherian.
 \item $Z$ is noetherian.
 \item $A$ is a finitely generated $Z$-module.
 \item The vertex corner rings $e_iAe_i$ are pairwise isomorphic algebras.
 \item Each vertex corner ring $e_iAe_i$ is isomorphic to $Z$.
\end{enumerate}
Furthermore, each condition is equivalent to each of the conditions (2) -- (6) with $A$ and $Z$ replaced by $\Lambda$ and its center $R$.
\end{Theorem}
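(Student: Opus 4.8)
The plan is to establish $(1)\Rightarrow(2),(3),(4),(6)$, the trivial $(6)\Rightarrow(5)$, and each of $(2),(3),(4),(5)\Rightarrow(1)$, the primed assertions for $\Lambda$ and $R$ being handled alongside. The implications out of $(1)$ are immediate: if $A$ is cancellative, Proposition~\ref{Bastuff}.8 presents $Z$ as a finitely generated $k$-algebra and $A$ as a finitely generated $Z$-module, which gives $(4)$ directly, $(3)$ by the Hilbert basis theorem, and then $(2)$ since a module-finite algebra over a commutative noetherian ring is noetherian; and Proposition~\ref{Bastuff}.7 gives $e_iAe_i=Ze_i\cong Z$ for every $i$, hence $(6)$. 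In the cancellative case $\Lambda=A$ and $R\cong Z$, so the primed conditions reduce to the unprimed ones. For $(2),(3),(4)\Rightarrow(1)$ I argue contrapositively: when $A$ is non-cancellative, Theorem~\ref{nonnoetherian theorem} makes $A$, $Z$, and $R$ nonnoetherian (killing $(2)$, $(3)$ and the primed $(3)$), Theorem~\ref{non-finitely generated} makes $A$ an infinitely generated $Z$-module (killing $(4)$), and Proposition~\ref{homotopy not noetherian} makes $\Lambda$ nonnoetherian and an infinitely generated $R$-module (killing the primed $(2)$ and $(4)$).

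The essential implication is $(5)\Rightarrow(1)$, which I prove contrapositively: assume $A$ is non-cancellative and, for contradiction, that the corner rings $e_iAe_i$ are pairwise isomorphic. I pass to the homotopy algebra. By the impression $\tau_\psi\colon\Lambda\hookrightarrow M_{|Q_0|}(B)$ of Proposition~\ref{homotopy not noetherian}, with $B=k[x_D\mid D\in\mathcal{S}']$, restriction to the $(i,i)$-corner identifies $e_i\Lambda e_i$ with the monomial subalgebra $B_i:=\bar{\tau}_{\psi}(e_iAe_i)\subseteq B$, and $R\subseteq B_i\subseteq S$ with $R=k[\cap_iB_i]$ and $S=k[\cup_iB_i]$. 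I would show that the surjection $e_iAe_i\twoheadrightarrow B_i$, whose kernel is $e_iAe_i\cap\ker\psi$, is the reduced-abelianization map of $e_iAe_i$: its kernel contains the commutator ideal since $B_i$ is commutative, and is nil modulo it since every non-cancellative difference $p-q$ is annihilated by a power of the central element $\sum_j\sigma_j$ (as in the proof of Corollary~\ref{yooohooo}), together with Lemma~\ref{Bbstuff}.4. As reduced abelianization is an isomorphism invariant, it follows that the $B_i$ are pairwise isomorphic. On the other hand, Proposition~\ref{s^n} produces a vertex $i_0$ with $R\subsetneq B_{i_0}$. The conclusion I aim for is that pairwise isomorphic monomial overalgebras of $R$ inside $S$ with intersection $R$ and generated union $S$ must all coincide; granting this, $R=\cap_iB_i=B_{i_0}=k[\cup_iB_i]=S$, so $A$ is cancellative by Theorem~\ref{cool!} --- a contradiction. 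The primed $(5')$ and $(6')$ follow by the same argument, using $e_i\Lambda e_i\cong B_i$.

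The step I expect to be the main obstacle is this last rigidity statement, since abstract isomorphism of two monomial subalgebras of $B$ does not in general force their equality. To overcome it one should exploit the explicit description of the $B_i$ as vertex cycle algebras --- each is generated by $\sigma$ together with the monomials $\overbar{p}$ for cycles $p\in\hat{\mathcal{C}}_i$, and the excess of $B_i$ over $R$ is governed by which classes $u\in\mathbb{Z}^2$ are realized at vertex $i$ by cycles avoiding all simple matchings (parts (4) and (5) of Proposition~\ref{Bastuff}, and Lemma~\ref{u = v}) --- together with the fact that an isomorphism $B_i\cong B_j$ must carry $R$ onto $R$; the plan is to combine these to match up generators and conclude $B_i=B_j$. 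Making precise both the reduced-abelianization identification and this rigidity statement is where the substantive work of the theorem lies.
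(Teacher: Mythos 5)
Your decomposition and citations for $(1)\Rightarrow(2),(3),(4),(6)$, for $(6)\Rightarrow(5)$, and for $\neg(1)\Rightarrow\neg(2),(3),(4)$ (together with the primed assertions via Proposition~\ref{homotopy not noetherian}) all coincide with the paper's proof. The place where your route diverges from the paper is precisely $(5)\Rightarrow(1)$, and you have correctly located the difficulty there.

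The paper dispatches $\neg(1)\Rightarrow\neg(5)$ in one clause: ``the vertex corner rings are not all isomorphic by Proposition~\ref{s^n}.'' What Proposition~\ref{s^n} literally produces, in the non-cancellative case, is a cycle $p\in e_iAe_i$ and a vertex $j$ with $\overbar{p}^{\,n}\notin\bar{\tau}_\psi(e_jAe_j)$ for all $n$; that is, the subalgebras $B_i:=\bar{\tau}_\psi(e_iAe_i)$ of $k[x_D\,|\,D\in\mathcal{S}']$ are not all equal as subsets. Condition $(5)$, however, asserts abstract $k$-algebra isomorphism of the $e_iAe_i$. Passing from ``the $B_i$ differ as subalgebras'' to ``the $e_iAe_i$ are not abstractly isomorphic'' needs a bridge that the paper leaves implicit and that you explicitly attempt to supply.

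Your bridge has two parts, and neither is closed. (a) You identify $B_i$ as the reduced abelianization of $e_iAe_i$, hence a ring-isomorphism invariant. For the homotopy version $(5')$ this step is in fact unnecessary, since $\tau_\psi$ is an impression of $\Lambda$ and so $e_i\Lambda e_i\cong B_i$ outright; you correctly note this. But for the $A$-version your sketch does not hold up: being annihilated by a power of $\sum_j\sigma_j$ does not make an element nilpotent, so it does not follow that $\ker\bar{\tau}_\psi\cap e_iAe_i$ is nil modulo commutators. What you would need is something like Lemma~\ref{Bbstuff}.1 for the full kernel, and you don't have that. (b) You then need the rigidity statement that pairwise-isomorphic monomial overalgebras of $R$ inside $S$, with intersection $R$ and generated union $S$, must coincide; you flag honestly that you cannot prove this, and it is indeed not a standard fact. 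So your proposal correctly pinpoints where the substantive content of $(5)\Rightarrow(1)$ lives, and improves on the paper by making the $\Lambda$-case of step (a) transparent, but it does not close the gap; the paper's own proof is equally compressed at exactly this step, so you have not missed an argument that the paper spells out.
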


\begin{proof}
First suppose $A$ is cancellative.
Then $A$ is finite over $Z$ and $Z$ is noetherian, by Propositions \ref{Bastuff}.8.
Therefore $A$ is noetherian.
Furthermore, the vertex corner rings are pairwise isomorphic and isomorphic to $Z$, by Propositions \ref{Bastuff}.7 and \ref{Bastuff}.8.
Finally, since $A$ is cancellative, we have $A = \Lambda$.

Conversely, suppose $A$ is non-cancellative.
Then $A$, $Z$, and $R$ are nonnoetherian by Theorem \ref{nonnoetherian theorem}; $A$ is infinite over $Z$ by Theorem \ref{non-finitely generated}; $\Lambda$ is nonnoetherian and infinite over $R$ by Proposition \ref{homotopy not noetherian}; and the vertex corner rings are not all isomorphic by Proposition \ref{s^n}.
\end{proof}

\ \\
\textbf{Acknowledgments.}
The author would like to thank an anonymous referee and Gwyn Bellamy for their helpful comments and suggestions.
The author was supported by the Austrian Science Fund (FWF) grant P 30549-N26.
Part of this article was completed while the author was a research fellow at the Heilbronn Institute for Mathematical Research at the University of Bristol.

\bibliographystyle{hep}
\def\cprime{$'$} \def\cprime{$'$}

\end{document}